\def\DateTime{September 22th, 2022}
\def\Version{Version $1.0$}
\numberwithin{equation}{section}
\theoremstyle{plain}
\author{Huayi CHEN}
\address{Universit\'e de Paris and Sorbonne Universit\'e, CNRS, INRIA,  IMJ-PRG, F-75013 Paris, France}
\email{huayi.chen@imj-prg.fr}
\author{Atsushi MORIWAKI}
\address{Department of Mathematics, Faculty of Science, Kyoto University, Kyoto, 606-8502, Japan}
\email{moriwaki@math.kyoto-u.ac.jp}
\title{Equidistribution theorem over an adelic curve}
\date{\DateTime,\ \rom{(}\Version\rom{)}}
\newcommand{\emptyinnprod}{\langle\kern.15em,\kern-.02em\rangle}
\def\sbullet{{\scriptscriptstyle\bullet}}
\def\rom{\textup}
\newcommand{\ZZ}{{\mathbb{Z}}}
\newcommand{\QQ}{{\mathbb{Q}}}
\newcommand{\RR}{{\mathbb{R}}}
\newcommand{\CC}{{\mathbb{C}}}
\newcommand{\PP}{{\mathbb{P}}}
\newcommand{\OO}{{\mathcal{O}}}
\newcommand{\Spec}{\operatorname{Spec}}
\newcommand{\ndot}{\raisebox{.4ex}{.}}
\newcommand{\rest}[2]{\left.{#1}\right\vert_{{#2}}}
\newcommand{\indic}{1\hspace{-0.25em}\mathrm{l}}
\newcommand\redsout{\bgroup\markoverwith{\textcolor{mred}{\rule[0.5ex]{2pt}{0.7pt}}}\ULon}
\def\colorsout#1{\bgroup\markoverwith{\textcolor{#1}{\rule[0.5ex]{2pt}{0.7pt}}}\ULon} 
\def\coloruline#1{\bgroup\markoverwith{\textcolor{#1}{\rule[-0.5ex]{2pt}{0.7pt}}}\ULon} 
\DeclareSymbolFont{bbold}{U}{bbold}{m}{n}
\DeclareMathSymbol{\bbalpha}{\mathord}{bbold}{"0B}
\DeclareMathSymbol{\bbbeta}{\mathord}{bbold}{"0C}
\DeclareMathSymbol{\bbgamma}{\mathord}{bbold}{"0D}
\DeclareMathSymbol{\bbdelta}{\mathord}{bbold}{"0E}
\DeclareMathSymbol{\bbespilon}{\mathord}{bbold}{"0F}
\DeclareMathSymbol{\bbzeta}{\mathord}{bbold}{"10}
\DeclareMathSymbol{\bbeta}{\mathord}{bbold}{"11}
\DeclareMathSymbol{\bbtheta}{\mathord}{bbold}{"12}
\DeclareMathSymbol{\bbiota}{\mathord}{bbold}{"13}
\DeclareMathSymbol{\bbkappa}{\mathord}{bbold}{"14}
\DeclareMathSymbol{\bblambda}{\mathord}{bbold}{"15}
\DeclareMathSymbol{\bbmu}{\mathord}{bbold}{"16}
\DeclareMathSymbol{\bbnu}{\mathord}{bbold}{"17}
\DeclareMathSymbol{\bbxi}{\mathord}{bbold}{"18}
\DeclareMathSymbol{\bbpi}{\mathord}{bbold}{"19}
\DeclareMathSymbol{\bbrho}{\mathord}{bbold}{"1A}
\DeclareMathSymbol{\bbsigma}{\mathord}{bbold}{"1B}
\DeclareMathSymbol{\bbtau}{\mathord}{bbold}{"1C}
\DeclareMathSymbol{\bbupsilon}{\mathord}{bbold}{"1D}
\DeclareMathSymbol{\bbphi}{\mathord}{bbold}{"1E}
\DeclareMathSymbol{\bbchi}{\mathord}{bbold}{"1F}
\DeclareMathSymbol{\bbpsi}{\mathord}{bbold}{"20}
\definecolor{ruby}{rgb}{0.88, 0.07, 0.37}
\definecolor{coolblack}{rgb}{0.0, 0.18, 0.39}
\definecolor{darkspringgreen}{rgb}{0.09, 0.45, 0.27}
\definecolor{emerald}{rgb}{0.31, 0.78, 0.47}
\definecolor{lavenderindigo}{rgb}{0.58, 0.34, 0.92}
\definecolor{mred}{rgb}{0.83, 0.0, 0.0}
\definecolor{indigo(web)}{rgb}{0.29, 0.0, 0.51}
\theoremstyle{plain}
\newtheorem{theo}{Theorem}[section]
\newtheorem{prop}[theo]{Proposition}
\newtheorem{coro}[theo]{Corollary}
\newtheorem{lemm}[theo]{Lemma}
\newtheorem{clai}[theo]{Claim}
\theoremstyle{definition}
\newtheorem{defi}[theo]{Definition}
\newtheorem{rema}[theo]{Remark}
\newtheorem{exem}[theo]{Example}
\begin{document}
\maketitle

\tableofcontents
\begin{abstract}
In this article, we introduce the notion of global adelic space of an arithmetic variety over an adelic curve and prove an equidistribution theorem for a generic sequence of subvarieties. As an application, we prove a Bogomolov type theorem for Abelian varieties over an adelic curve of characteristic $0$.
\end{abstract}

\section{Introduction}

In Arakelov geometry, equidistribution of small algebraic points in an arithmetic projective variety has firstly been studied in the work \cite{MR1427622} of Szipro, Ullmo and Zhang (see also the Bourbaki seminar review \cite{MR1627110} of Abbes), which has a fundamental importance in the resolution of Bogomolov's conjecture \cite{MR1609514,MR1609518} by Arakelov geometry method (see \cite{MR1478502} for another approach to the conjecture using Diophantine geometry). Let us remind the statement of the arithmetic equidistribution theorem in its classic form. Let $A$ be an abelian variety over a number field, $\overline L$ be a symmetric ample line bundle $L$ equipped with a positive adelic metric $\varphi$ such that the Arakelov height function with respect to $\overline L$ coincides with the N\'eron-Tate height. Let $(x_n)_{n\in\mathbb N}$ be a sequence of algebraic points of $A$ such that the N\'eron-Tate height of $x_n$ converges to $0$ (we say that such a sequence is \emph{small}). Then the Zariski closure $X$ of $(x_n)_{n\in\mathbb N}$ is the translation of an abelian subvariety of $A$ by a torsion point. Moreover, if in addition any subsequence of $(x_n)_{n\in\mathbb N}$ is Zariski closed in $X$, then, for any Archimedean place $\sigma$ of the number field, the Borel measure $\delta_{x_n,\sigma}$ on $X_\sigma(\mathbb C)$ of taking the average on the Galois orbit of $x_n$ converges weakly to the Monge-Amp\`ere measure $c_1(L_\sigma,\varphi_\sigma)^{\dim(X)}$ on $X_\sigma(\mathbb C)$. This equidistribution theorem has then been generalized in various contexts. We refer the readers to \cite{MR1779799} for the case where the base field is a finitely generated extension of $\mathbb Q$, to \cite{MR1832991,MR4404794} for the case of a semi-abelian variety, to \cite{MR2114791,MR2245872} for equidistribution of a small sequence of sub-varieties, to \cite{MR2164622,MR2221116,MR2244226} for the case of a dynamical system on a projective line, and to \cite{MR2244803} for an equidistribution theorem of a small sequence of algebraic points in the analytic variety over a non-Archimedean place.  We also refer to
\cite{MR2457191,MR2506588} for similar results over function fields. In \cite{MR2425137}, an arithmetic analogue of Siu's inequality has been proved, which leads to an equidistribution theorem with a weaker condition on the metrics of the adelic line bundle.  

In this article, we revisit the equidistribution of a small sequence of subvarieties in the setting of Arakelov geometry over an adelic curve. Let $K$ be a countable perfect field and $\phi=(|\ndot|_\omega)_{\omega\in\Omega}$ be a family of absolute values of $K$ which is parametrized by a measure space $(\Omega,\mathcal A,\nu)$, such that, for any $a\in K^{\times}$, the function $(\omega\in\Omega)\mapsto \ln|a|_\omega$ is $\nu$-integrable and of integral $0$. We call the data $S=(K,(\Omega,\mathcal A,\nu),\phi)$ a \emph{proper adelic curve}. Let $X$ be an integral projective scheme over $\Spec K$ and $d$ be the dimension of $X$. Let $\overline L=(L,\varphi)$ be an adelic line bundle on $X$, namely an invertible $\mathcal O_X$-module $L$ together with a family $\varphi=(\varphi_\omega)_{\omega\in\Omega}$ of metrics on $L_\omega$ satisfying dominancy and measurability conditions. We assume in addition that $L$ is semi-ample (namely a tensor power of $L$ is generated by global sections), $\deg_L(X)=(L^d)>0$ and $\varphi$ is semi-positive. The data $\overline L$ permit to construct an arithmetic intersection number $(\overline L|_Y)_S^{\dim(Y)+1}$ for any integral closed subscheme $Y$ of $X$, which can be written as an integral over $\Omega$ of local intersection numbers. In the case where $\deg_L(Y)=(\overline L|_Y)^{\dim(Y)}>0$, the \emph{normalized height} of $Y$ with respect to $\overline L$ is defined as  
\[h_{\overline L}(Y)=\frac{(\overline L|_Y)_S^{\dim(Y)+1}}{(\dim(Y)+1)\deg_L(Y)}.\] 
Let $Y$ be an integral closed subscheme of $X$ such that $\deg_L(Y)>0$. For any $\omega\in\Omega$, we denote by $\delta_{\overline L,Y,\omega}$ the Radon measure on $X$ such that, for any continuous function $f$ on the analytic space $X_\omega^{\mathrm{an}}$,
\[\int_{X_\omega^{\mathrm{an}}}f(x)\,\delta_{\overline L,Y,\omega}(\mathrm{d}x)=\frac{1}{\deg_L(Y)}\int_{Y_\omega^{\mathrm{an}}}f(y)\,c_1(L_\omega|_{Y_\omega},\varphi_{\omega}|_{Y_\omega})^{\dim(Y)}(\mathrm{d}y).\]
In the case where $|\ndot|_{\omega}$ is non-Archimedean,  the Monge-Amp\`ere measure \[c_1(L_\omega|_{Y_\omega},\varphi_{\omega}|_{Y_\omega})^{\dim(Y)}(\mathrm{d}y)\] has been constructed in \cite[Definition 2.4]{MR2244803}.

Note that, if one modifies the metrics $\varphi_\omega$ for $\omega$ belonging to a set of measure $0$, the height of subvarieties of $X$ does not change. However the local Monge-Amp\`ere measure can be modified by this procedure. Hence it is not adequate to consider a local equidistribution problem with respect to a single place $\omega$ unless the set $\{\omega\}$ belongs to $\mathcal A$ and has a positive measure with respect to $\nu$. We therefore introduce the following global version of Monge-Amp\`ere measure. Let $\Omega'$ be an element of $\mathcal A$ such that $\nu(\Omega')>0$. We denote by $X_{\Omega'}^{\mathrm{an}}$ the disjoint union $\coprod_{\omega\in\Omega'}X_\omega^{\mathrm{an}}$ of local analytifications indexed by $\Omega'$. We equipped this set with a suitable $\sigma$-algebra $\mathcal B_{X,\Omega'}$ so that the canonical projection map $X_{\Omega'}^{\mathrm{an}}\rightarrow \Omega'$ sending the elements of $X_\omega^{\mathrm{an}}$ to $\omega$ gives a fibration of measurable spaces. It turns out that local Monge-Amp\`ere measures mentioned above form a disintegration of a measure on $(X_{\Omega'}^{\mathrm{an}},\mathcal B_{X,\Omega'})$ over $\nu|_{\Omega'}$: for any integral closed subscheme $Y$ of $X$ such that $\deg_L(Y)>0$, we denote by $\delta_{\overline L,Y,\Omega'}$ the measure on $(X_{\Omega'}^{\mathrm{an}},\mathcal B_{X,\Omega'})$ which is defined as 
\[\int_{X_{\Omega'}^{\mathrm{an}}}f(x)\,\delta_{\overline L,Y,\Omega'}(\mathrm{d}x):=\int_{\Omega'}\bigg(\int_{X_\omega^{\mathrm{an}}}f(x)\,\delta_{\overline L,Y,\omega}(\mathrm{d}x)\bigg)\,\nu(\mathrm{d}\omega).\] 
From a functional point of view, one can consider $\delta_{\overline L,Y,\Omega'}$ as a linear form on the vector space of adelic families
of continuous functions on $X$.  
Denote by $\mathscr C^0_{\mathrm{a}}(X)$ the set of families $f = (f_{\omega})_{\omega \in \Omega}$ of 
continuous functions on $X$ such that $(\mathcal O_X, (\mathrm{e}^{-f_\omega}|\ndot|_{\omega})_{\omega \in \Omega})$ 
forms an adelic line bundle on $X$. Note that $f$ yields a measurable function $f_{\Omega}$ on $X_{\Omega}^{\mathrm{an}}$ 
given by $f_{\Omega}(x) = f_{\omega}(x)$ for $x \in X_\omega^{\mathrm{an}}$. 
We denote by $\mathscr C^0_{\mathrm{a}}(X;\Omega')$ the vector subspace of $\mathscr C^0_{\mathrm{a}}(X)$ consisting of $f\in \mathscr C^0_{\mathrm{a}}(X)$ such that $f_\omega=0$ for any $\omega\in\Omega\setminus\Omega'$. Then 
\[(f\in \mathscr C^0_{\mathrm{a}}(X;\Omega')) \longmapsto \int_{X_{\Omega'}^{\mathrm{an}}}f(x)\,\delta_{\overline L,Y,\Omega'}(\mathrm{d}x)\]
defines a linear functional on $\mathscr C^0_{\mathrm{a}}(X;\Omega')$. The main result of the article is the following (see Theorem \ref{theorem:equidistribution}).

\begin{theo}\label{Thm: equidistribution theorem}
Let $X$ be an integral projective scheme of dimension $d$ over $\Spec K$ and $\overline L=(L,\varphi)$ be an adelic line bundle on $X$ such that $L$ is semi-ample, $(L^d)>0$ and $\varphi$ is semi-positive. Let $(Y_n)_{n\in\mathbb N}$ be a sequence of integral closed subschemes of $X$, such that each of its subsequences is Zariski dense in $X$, and that $h_{\overline L}(Y_n)$ is well-defined and converges to $h_{\overline L}(X)$ when $n\rightarrow+\infty$. Then, for any $\Omega'\in\mathcal A$ such that $\nu(\Omega')>0$, the sequence of measures $(\delta_{\overline L,Y_n,\Omega'})_{n\in\mathbb N}$, viewed as a sequence of linear functionals on $\mathscr C^0_{\mathrm{a}}(X;\Omega')$, converges pointwisely to $\delta_{\overline L,X,\Omega'}$.
\end{theo}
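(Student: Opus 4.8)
The plan is to follow the variational principle of Szpiro–Ullmo–Zhang, in the form adapted by Yuan so as to tolerate non semi-positive perturbations of the metric, carried out inside the adelic arithmetic intersection theory developed above. Concretely, fix $\Omega'\in\mathcal A$ with $\nu(\Omega')>0$ and, for an integral closed subscheme $Y$ of $X$ with $\deg_L(Y)>0$, abbreviate $I_Y(f):=\int_{X_{\Omega'}^{\mathrm{an}}}f(x)\,\delta_{\overline L,Y,\Omega'}(\mathrm d x)$ for $f\in\mathscr C^0_{\mathrm{a}}(X;\Omega')$. Since each local Monge–Ampère measure $\delta_{\overline L,Y,\omega}$ is a probability measure on the compact space $X_\omega^{\mathrm{an}}$, one has $|I_Y(f)|\le\|f\|:=\int_{\Omega'}\|f_\omega\|_{\sup}\,\nu(\mathrm d\omega)$, a bound independent of $Y$; hence all functionals $I_{Y_n}$ and $I_X$ are $1$-Lipschitz for the norm $\|\ndot\|$, and it suffices to prove $I_{Y_n}(f)\to I_X(f)$ for $f$ in a $\|\ndot\|$-dense subset. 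I would take $V$ to be the set of $f$ for which $\overline L(f):=\overline L\otimes(\mathcal O_X,(\mathrm e^{-f_\omega}|\ndot|_\omega)_{\omega\in\Omega})$ is a difference of two semi-positive adelic line bundles: $V$ is $\|\ndot\|$-dense (density of integrable metrics in continuous metrics on the semi-ample $L$), and is stable under $f\mapsto -f$ because $\varphi$ is semi-positive. It then suffices to establish $\liminf_n I_{Y_n}(f)\ge I_X(f)$ for all $f\in V$ and to apply this also to $-f$.

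\emph{Expansion of the height.} Fix $f\in V$ and, for $t\in[0,1]$, set $\overline L_t:=\overline L(tf)$, an integrable adelic line bundle with underlying invertible sheaf $L$. Multilinearity of the arithmetic intersection number gives, for every integral closed subscheme $Y$ with $\deg_L(Y)>0$ and $m:=\dim(Y)$,
\[(\overline L_t|_Y)^{m+1}_S=\sum_{k=0}^{m+1}\binom{m+1}{k}t^{k}\big((\mathcal O_X,(\mathrm e^{-f_\omega}|\ndot|_\omega))^{k}\cdot(\overline L|_Y)^{m+1-k}\big)_S.\]
Because the trivially metrized $(\mathcal O_X,(\mathrm e^{-f_\omega}|\ndot|_\omega))$ carries the global section $1$ with empty divisor, the integral formula for local intersection numbers identifies the $k=1$ term with $(m+1)\,t\,\deg_L(Y)\,I_Y(f)$; the terms with $k\ge 2$ are arithmetic intersection numbers of integrable adelic line bundles, hence (by a standard estimate) bounded in absolute value by $C_k\deg_L(Y)$ for constants $C_k$ depending only on $\overline L$, $f$ and $X$. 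Since $m\le d$, dividing by $(m+1)\deg_L(Y)$ gives
\[h_{\overline L_t}(Y)=h_{\overline L}(Y)+t\,I_Y(f)+R_Y(t),\qquad |R_Y(t)|\le C\,t^{2}\quad(0\le t\le1),\]
with $C$ independent of $Y$.

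\emph{Fundamental inequality and conclusion.} For $t\in(0,1]$ small I would combine two inputs. First, genericity of $(Y_n)$ — every proper closed subset $Z\subsetneq X$ contains only finitely many of the $Y_n$ — gives $\liminf_n h_{\overline L_t}(Y_n)\ge\inf_{Y\not\subseteq Z}h_{\overline L_t}(Y)$ for each such $Z$, hence $\liminf_n h_{\overline L_t}(Y_n)\ge\mu_{\mathrm{ess}}(\overline L_t):=\sup_{Z\subsetneq X}\inf_{Y\not\subseteq Z}h_{\overline L_t}(Y)$. Second, the fundamental (Zhang-type) inequality established above — ultimately coming from the arithmetic Hilbert–Samuel formula and an arithmetic Siu inequality over the adelic curve — gives $\mu_{\mathrm{ess}}(\overline L_t)\ge h_{\overline L_t}(X)$ once $t$ is small enough. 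Feeding these together with $h_{\overline L}(Y_n)\to h_{\overline L}(X)$ into the expansion of the previous step,
\[h_{\overline L}(X)+t\,I_X(f)-C t^{2}\ \le\ h_{\overline L_t}(X)\ \le\ \liminf_n h_{\overline L_t}(Y_n)\ \le\ h_{\overline L}(X)+t\,\liminf_n I_{Y_n}(f)+C t^{2};\]
dividing by $t>0$ and letting $t\to0^{+}$ yields $I_X(f)\le\liminf_n I_{Y_n}(f)$. Applying this to $-f\in V$ gives $\limsup_n I_{Y_n}(f)\le I_X(f)$, so $I_{Y_n}(f)\to I_X(f)$ for all $f\in V$, and then for all $f\in\mathscr C^0_{\mathrm{a}}(X;\Omega')$ by the equicontinuity noted at the outset.

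\emph{Expected main obstacle.} The crux is the fundamental inequality $\mu_{\mathrm{ess}}(\overline L_t)\ge h_{\overline L_t}(X)$ for the perturbed, and no longer semi-positive, adelic line bundle $\overline L_t$: this is where the semi-ampleness of $L$, the positivity $(L^{d})>0$ and the semi-positivity of $\varphi$ are genuinely used, and it requires pushing the arithmetic Hilbert–Samuel / arithmetic Siu machinery over an adelic curve into this generality (e.g.\ by approximating $\overline L_t$ by semi-positive bundles while keeping control of the essential minimum), together with the precise identification of the linear term of the height expansion with the global Monge–Ampère functional $I_Y(f)$. A secondary technical point is the uniformity in $Y$ of the remainder bound $|R_Y(t)|\le C t^{2}$, which rests on estimating arithmetic intersection numbers of integrable adelic line bundles by the geometric degree $\deg_L(Y)$.
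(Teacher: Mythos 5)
Your strategy is the classical Szpiro--Ullmo--Zhang variational argument in Yuan's form, and the overall architecture (a uniform quadratic expansion $h_{\overline L_t}(Y)=h_{\overline L}(Y)+t\,I_Y(f)+O(t^2)$, a lower bound on $\liminf_n h_{\overline L_t}(Y_n)$, and a $t\to 0^+$ limit) is sound. But the step you flag as the ``expected main obstacle'' is in fact a genuine gap, not a routine verification, and the paper's proof is organized precisely so as to avoid it. You would need $\mu_{\mathrm{ess}}(\overline L_t)\geqslant h_{\overline L_t}(X)$ for the perturbed metric family $\varphi+tf$, which is no longer semi-positive; in the number-field case this is where Yuan's arithmetic Siu inequality enters, and no such Siu-type inequality over a general adelic curve is established here (nor in the references the paper draws on). Without it, the chain of inequalities in your ``Fundamental inequality and conclusion'' paragraph breaks at the middle link, and the argument does not close. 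A secondary concern: your uniform bound $|R_Y(t)|\leqslant C t^2$ needs care because the $Y_n$ range over integral closed subschemes of all dimensions $\leqslant d$, so the number of $k\geqslant 2$ terms and the exponents $\dim(Y)+1-k$ vary with $Y$; the estimate $(\,\cdot\,)_S = O(\deg_L(Y))$ term by term must be made uniform across dimensions and requires the integrability (difference of semi-positive) hypothesis on $f$ on each $Y_n$, which should be spelled out.

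The paper takes a different route, following the differentiability interpretation of the $\chi$-volume rather than the essential-minimum/Siu route. Concretely, for $f\in\mathscr C^0_{\mathrm a}(X;\Omega')$ one sets
\[
\Psi(f)=\liminf_{n\to+\infty}\frac{\widehat{\mathrm{vol}}_\chi\big(\overline L(f)|_{Y_n}\big)}{(\dim(Y_n)+1)\,\deg_L(Y_n)}.
\]
This is concave in $f$ (Corollary \ref{Cor: concavity of vol chi}, which holds for arbitrary, not necessarily semi-positive, metric families on a slope-bounded line bundle), and one has the elementary slope inequality
\[
\Psi(f)\;\geqslant\;\widehat{\mu}_{\max}^{\mathrm{asy}}\big(\overline L(f)\big)\;\geqslant\;\frac{\widehat{\mathrm{vol}}_\chi\big(\overline L(f)\big)}{(d+1)\deg_L(X)},
\]
which plays the role of Zhang's fundamental inequality for \emph{any} metric family, no Siu-type estimate required. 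Since $\Psi(0)$ equals the right-hand side at $f=0$ by the hypothesis that $h_{\overline L}(Y_n)\to h_{\overline L}(X)$ and the arithmetic Hilbert--Samuel formula, and since $\widehat{\mathrm{vol}}_\chi(\overline L(\,\cdot\,))$ is G\^ateaux differentiable at $0$ (Proposition \ref{Pro: differentiability of vol chi}, which is where semi-ampleness and semi-positivity are used), the standard ``sandwich'' lemma for a concave function pinched from below by a differentiable function with equality at a point forces $\Psi$ to have the same differential, and the conclusion follows from the differentiability of each $\Phi_{Y_n}$. The trade-off is clear: your approach is more geometric and directly variational, but requires an arithmetic Siu inequality over an adelic curve for non-semi-positive perturbations, which is not available; the paper's approach substitutes concavity of $\widehat{\mathrm{vol}}_\chi$ for Siu's inequality and only needs differentiability at the semi-positive point, which is accessible via the local relative-volume/Monge--Amp\`ere theory.
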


The proof of the theorem is inspired by the original work of Szipro, Ullmo and Zhang, the subvariety version of Autissier, together with the differentiability interpretation introduced in \cite{MR2835335}. The idea relies on the following simple observation. Let $V$ be a finite-dimensional real vector space, $x_0$ be an element of $V$, and $f$ and $g$ be two real-valued functions on $U$ such that $f(x)\geqslant g(x)$ for any $x\in V$. Assume $f$ is concave on $V$, $g$ is G\^ateaux differentiable at $x_0$, and $f(x_0)=g(x_0)$. Then the function $f$ is also G\^ateaux differentiable at $x_0$ and its differential identifies with that of $g$. Concretely in the case of the equidistribution problem, we consider, for any integral closed subscheme $Y$ of $X$ such that $\deg_L(Y)>0$, the linear functional $\Phi_{Y}:\mathscr C^0_{\mathrm{a}}(X;\Omega')\rightarrow\mathbb R$ which sends $f\in\mathscr C^0_{\mathrm{a}}(X;\Omega')$ to
\[\frac{\operatorname{\widehat{vol}}_{\chi}((L,\varphi+f)|_Y)}{(\dim(Y)+1)\deg_L(Y)},\]
where $\operatorname{\widehat{vol}}_{\chi}((L,\varphi+f)|_Y)$ denotes the $\chi$-volume of $(L,\varphi+f)|_Y$, which is defined as 
\[\lim_{n\rightarrow+\infty}\frac{\widehat{\deg}\Big(H^0(Y,L|_Y^{\otimes n}),\big(\|\ndot\|_{(n\varphi_{\omega}+nf_{\omega})|_{Y_\omega}}\big)_{\omega\in\Omega}\Big)}{n^{\dim(Y)+1}/(\dim(Y)+1)!}.\]
By the arithmetic Hilbert-Samuel formula, the value of $\Phi_{Y}$ at $0$ identifies with $h_{\overline L}(Y)$. Moreover, this functional is concave. Consider now a generic sequence $(Y_n)_{n\in\mathbb N}$ of integral closed subschemes of $X$ as in Theorem \ref{Thm: equidistribution theorem}. For any $f\in\mathscr C^0_{\mathrm{a}}(X;\Omega')$, let 
\[\Phi_{Y_\sbullet}(f):=\liminf_{n\rightarrow+\infty}\Phi_{Y_n}(f).\]
Since the functionals $\Phi_{Y_n}$ are concave, so is $\Phi_{Y_\sbullet}$. The sequence $(Y_n)_{n\in\mathbb N}$ being generic, the functional $\Phi$ is bounded from below by $\Phi_X$. Moreover, the hypothesis that $(h_{\overline L}(Y_n))_{n}$ converges to $h_{\overline L}(X)$ shows that $\Phi_{Y_\sbullet}(0)=\Phi_X(0)$. Therefore, we deduce from the differentiability of $\Phi_X$ the equidistribution result. Note that the equality $\Phi_{Y_\sbullet}(0)=\Phi_X(0)$ is not always satisfied. In general, for any generic sequence $(Y_n)_{n\in\mathbb N}$, the limit inferior of $\Phi_{Y_n}(f)$ when $n\rightarrow+\infty$ is always bounded from below by the asymptotic maximal slope of $(L,\varphi+f)$, which is defined as 
\[\widehat{\mu}_{\max}^{\mathrm{asy}}(L,\varphi+f)=\lim_{n\rightarrow+\infty}\frac{\widehat{\mu}_{\max}\Big(H^0(Y,L|_Y^{\otimes n}),\big(\|\ndot\|_{(n\varphi_{\omega}+nf_{\omega})|_{Y_\omega}}\big)_{\omega\in\Omega}\Big)}{n}.\] Moreover, the lower bound $\widehat{\mu}_{\max}^{\mathrm{asy}}(L,\varphi)$ of $\Phi_{Y_\sbullet}(0)$ is attained by a certain generic sequence $(Y_n)_{n\in\mathbb N}$ (see \cite[\S5.2.9]{CMHS}). In particular, if the function 
\[(f\in\mathscr C^0_{\mathrm{a}}(X;\Omega'))\longmapsto \widehat{\mu}_{\max}^{\mathrm{asy}}(L,\varphi+f)\]
is G\^ateaux differentiable at $0$, then the following relation holds 
\[\lim_{n\rightarrow+\infty}\delta_{\overline L,Y_n,\Omega'}(f)=\frac{\mathrm{d}}{\mathrm{d}t}\Big|_{t=0}\widehat{\mu}_{\max}^{\mathrm{asy}}(L,\varphi+tf).\]

\medskip

As an application of the equidistribution theorem, we consider Bogomolov's conjecture over a countable field of characteristic zero.
We assume that $K$ is algebraically closed field of characteristic zero, $\nu(\Omega_{\infty}) > 0$, and $\nu(\mathcal A)\not\subseteq\{0,1\}$.

\begin{theo}
Let $A$ be an abelian variety over $K$, $L$ be an ample and symmetric line bundle on $A$, and $\varphi$ be a family of semipositive metrics
of $A$ such that $(A, \varphi)$ is nef and $\varphi_{\omega}$ is the canonical metric of $L_{\omega}$ for each $\omega\in \Omega$.
If the essential minimum of $\rest{(L, \varphi)}{X}$ is zero, then $X$ is a translation of an abelian subvariety of $A$ by a closed point of N\'{e}ron-Tate height $0$, which is a torsion point provided that any finitely generated subfield of $K$ has Northcott's property.
\end{theo}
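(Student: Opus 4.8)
The plan is to follow the strategy of Szpiro--Ullmo--Zhang, feeding the equidistribution Theorem~\ref{Thm: equidistribution theorem} into the argument in place of the classical one, and to reduce to a core case by d\'evissage on $\dim(A)$. Write $\overline L:=(L,\varphi)$ and let $\widehat h$ be the N\'eron--Tate height on $A$ attached to $L$. If $\dim(X)=0$, then $X=\{x\}$ and the essential minimum of $\overline L|_X$ equals $\widehat h(x)$, so $\widehat h(x)=0$; thus $X$ is the translate of the trivial abelian subvariety by $x$, and when every finitely generated subfield of $K$ has Northcott's property the multiples $[n]x$ all have height $n^{2}\widehat h(x)=0$, lie in a common finitely generated subfield, and are of bounded degree, so $\{[n]x:n\in\mathbb Z\}$ is finite and $x$ is torsion. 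Assume now $\dim(X)\geqslant 1$ and set $G:=\operatorname{Stab}(X)^{\circ}$. If $\dim(G)=\dim(X)$, a translate of $G$ is an irreducible closed subvariety of $X$ of full dimension, so $X$ is a single coset of $G$ and we conclude as in the last step below. If $0<\dim(G)<\dim(X)$, we pass to $\pi\colon A\to A/G$: since $\pi^{-1}(\pi(X))=X$, the subvariety $\pi(X)$ has dimension $\dim(X)-\dim(G)\geqslant 1$ and finite stabilizer, and---choosing an ample symmetric $L'$ on $A/G$ with its canonical metric $\varphi'$, and using that the fibres of $\pi|_X$ are cosets of $G$ and that canonical heights attached to distinct ample symmetric bundles are commensurable---one checks that $(A/G,\varphi')$ is nef and that the essential minimum of $(L',\varphi')|_{\pi(X)}$ vanishes; by the inductive hypothesis $\pi(X)$ is a translate of an abelian subvariety by a point of N\'eron--Tate height $0$ (torsion under Northcott), hence so is $X=\pi^{-1}(\pi(X))$ after lifting the translation point through $\pi$ so as to minimise its height. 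It therefore remains to show: if $X\subseteq A$ is integral with $\dim(X)=r\geqslant 1$ and $\operatorname{Stab}(X)$ finite, then the essential minimum of $\overline L|_X$ is strictly positive.

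Suppose, to reach a contradiction, that it is $0$. As $K$ is countable, we may choose a generic sequence $(x_n)_{n\in\mathbb N}$ of closed points of $X$---each subsequence Zariski dense---with $h_{\overline L}(x_n)\to 0$. Since $(A,\varphi)$ is nef, $\overline L|_X$ is nef, so $h_{\overline L}(X)\geqslant 0$; together with Zhang's inequality $h_{\overline L}(X)\leqslant(\text{essential minimum of }\overline L|_X)=0$, this gives $h_{\overline L}(X)=0=\lim_n h_{\overline L}(x_n)$. Because $\nu(\Omega_\infty)>0$, fix $\Omega'\in\mathcal A$ with $\Omega'\subseteq\Omega_\infty$ and $\nu(\Omega')>0$; Theorem~\ref{Thm: equidistribution theorem} then yields $\delta_{\overline L,x_n,\Omega'}\to\delta_{\overline L,X,\Omega'}$ on $\mathscr C^0_{\mathrm a}(X;\Omega')$. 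Since $L$ is symmetric and $\varphi$ canonical, $[m]^{*}\overline L\cong\overline L^{\otimes m^{2}}$ and $\widehat h([m]y)=m^{2}\widehat h(y)$ for all $m\geqslant 1$; hence for any morphism $\psi$ built from the multiplication maps $[m_i]$ and the group law on $A$, the $\psi$-images of tuples of the $x_n$ form a generic sequence in $\psi(X\times\cdots\times X)$ with canonical height tending to $0$ and with nef, canonically metrized restriction of $\overline L$, so Theorem~\ref{Thm: equidistribution theorem} also applies to these auxiliary subvarieties, the limit being the $\psi$-pushforward of a product of copies of $\delta_{\overline L,X,\Omega'}$.

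The heart of the proof is the geometric contradiction, obtained following Ullmo and Zhang. Consider $\alpha\colon X^{r+1}\to A^{r}$, $(x_0,\ldots,x_r)\mapsto(x_0-x_1,\ldots,x_{r-1}-x_r)$, with image $Z:=\alpha(X^{r+1})$. Since $\operatorname{Stab}(X)$ is finite, the Gauss map of $X$ is generically finite, which forces $\bigcap_{i=0}^{r}T_{x_i}(X)=0$ for generic $(x_0,\ldots,x_r)$ and hence makes $\alpha$ generically finite onto $Z$ with $\dim(Z)=r(r+1)$. Applying $\alpha$ to generic small points of $X$ produces small points of $Z$ (heights being quadratic) that are Zariski dense in $Z$ (as $\alpha$ is dominant onto $Z$), so $h_{\overline L}(Z)=0$ and Theorem~\ref{Thm: equidistribution theorem} applies to $Z$ with respect to the restriction of an arbitrary symmetric ample line bundle on $A^{r}$ endowed with its canonical metric. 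Now the limit of the Dirac measures at these points does not depend on the chosen polarization of $A^{r}$, while the canonical measure of $Z$ does depend on it unless $Z$ is stabilized by a positive-dimensional subgroup; combining this with the parallel computation of the arithmetic intersection numbers of the pullback by $\alpha$ of a symmetric ample line bundle on $X^{r+1}$ (using $(x-y)^{*}\overline L\equiv p_1^{*}\overline L+p_2^{*}\overline L$ numerically, the theorem of the cube, and the projection formula along $\alpha$) yields a numerical contradiction with the finiteness of $\operatorname{Stab}(X)$. This proves the remaining assertion, closing the d\'evissage; and in the general case $X=B+t$ (with $B$ an abelian subvariety), the relation $h_{\overline L}(X)=0$ and the compatibility of canonical heights under $A\to A/B$ force the N\'eron--Tate height of the image of $t$ in $A/B$ to vanish, so a suitable representative $t$ has $\widehat h(t)=0$, and under Northcott for finitely generated subfields the argument of the zero-dimensional case shows $t$ torsion.

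I expect the main obstacle to be this geometric-contradiction step. It rests on the finiteness of the Gauss map (which is where the hypothesis that $X$ is not stabilized by a positive-dimensional subgroup is genuinely used), on a separate treatment of the boundary case $\dim(X)=\dim(A)-1$ (where $Z=A^{r}$ and the polarization-independence argument degenerates and must be refined), and---most importantly here---on a faithful transcription of Ullmo--Zhang's intersection-theoretic bookkeeping into the framework of global arithmetic intersection numbers and the global Monge--Amp\`ere measures $\delta_{\overline L,\cdot,\Omega'}$ over the adelic curve, checking along the way that all auxiliary metrized line bundles remain semi-positive and that the relevant vanishings of heights and essential minima persist.
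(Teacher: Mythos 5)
Your high-level strategy (d\'evissage to the case of trivial stabilizer, invocation of Theorem~\ref{Thm: equidistribution theorem}, and a geometric contradiction built on the map $X^{m}\to A^{m-1}$) does match the paper's, and your handling of the zero-dimensional case, the reduction via $A\to A/\operatorname{Stab}(X)^{\circ}$, and the torsion statement under Northcott are all essentially what the paper does (the paper simply cites \cite[Theorem 9.20]{MArakelov} for the reduction you carry out by induction). However, the core contradiction --- which you yourself flag as ``the main obstacle'' and leave as a sketch --- is where your route and the paper's genuinely diverge, and where your proposal has a gap.

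You propose to run the classical Ullmo--Zhang polarization-independence / intersection-theoretic bookkeeping (``the limit of the Dirac measures\ldots does not depend on the chosen polarization, while the canonical measure of $Z$ does\ldots combining this with the parallel computation of the arithmetic intersection numbers\ldots yields a numerical contradiction''). This is precisely the step that the adelic-curve formalism makes delicate and that the paper is constructed to avoid: since modifying the metrics $\varphi_\omega$ on a $\nu$-null set changes the local Monge--Amp\`ere measures without changing any height, one cannot meaningfully compare limit measures place by place. The paper instead proceeds as follows. With $m$ chosen so that $f\colon X^{m}\to A^{m-1}$, $(x_1,\dots,x_m)\mapsto(x_2-x_1,\dots,x_m-x_{m-1})$, is \emph{birational onto its image but not finite}, it applies Theorem~\ref{theorem:equidistribution} to the same small generic sequence $(y_n)$ in $X^m$ twice, once for $\overline L^{\boxtimes m}$ and once for $f^{*}(\overline L^{\boxtimes(m-1)})$, obtaining the single identity of measures
\[
\delta_{\overline L^{\boxtimes m},X^m,\Omega_\infty}=\delta_{f^{*}(\overline L^{\boxtimes(m-1)}),X^m,\Omega_\infty}
\]
on the global adelic space $(X^m)^{\mathrm{an}}_{\Omega_\infty}$. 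On the regular locus there is a smooth non-negative Radon--Nikodym density $q_\omega$ of the second local Monge--Amp\`ere measure with respect to the first, and the identity rewrites as $p\,\delta_{\overline L^{\boxtimes m},X^m,\Omega_\infty}=\delta_{f^{*}(\overline L^{\boxtimes(m-1)}),X^m,\Omega_\infty}$ for a suitable extension $p$ of $q$. The two ingredients making this rigorous in the adelic setting are Proposition~\ref{prop:measurable:partial:derivative} (measurability in $\omega$ of the second-order partial derivatives of a measurable family of smooth functions at an algebraic point, used to show $p$ is a measurable family and hence $\mathcal B_{X,\Omega_\infty}$-measurable) and Proposition~\ref{prop:Radon-Nikodym} (a Radon--Nikodym-type rigidity for Borel measure families). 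These force $q_\omega\equiv 1$ $\delta_{\overline L^{\boxtimes m},X^m,\omega}$-a.e.\ for $\nu$-a.e.\ $\omega\in\Omega_\infty$, contradicting the fact that $q_\omega$ is continuous on the regular locus and vanishes along the diagonal (because $f$ contracts the diagonal, so the pullback curvature degenerates there). This is a single, localizable comparison of measures, not the two-polarization comparison of the classical proof, and it uses the hypothesis $\nu(\Omega_\infty)>0$ precisely so that one can work with smooth functions and their second derivatives.

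Concretely, then: your proposal is not wrong in outline, but (i) the contradiction step you sketch is the unadapted classical argument and you have not shown how to make the required ``bookkeeping'' survive the passage to adelic curves, in particular how to compare ``the limit measure'' with ``the canonical measure'' when neither is pinned down at an individual place; (ii) you do not use, or replace, the measurability machinery (\S2 and \S3) that the paper developed exactly to make such a comparison legal; and (iii) your assertion that $\alpha\colon X^{r+1}\to A^{r}$ is generically finite with $\dim Z=r(r+1)$ is stated without the ``not finite'' half of the dichotomy that is actually what produces the contradiction (the diagonal is contracted), and the paper in fact chooses $m$ so that $f$ is \emph{birational onto its image and not finite}, which is the form the Radon--Nikodym argument needs. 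Until the contradiction step is carried out either by transporting Ullmo--Zhang's computation into the global adelic framework (with the measurability issues addressed) or by the paper's density argument, the proof is incomplete.
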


This theorem is a generalization of \cite[Theorem~8.1]{MR1779799}. 

The rest  of the article is organized as follows. In the second section, we prove the G\^{a}teaux  differentiability with respect to modification of metrics of the $\chi$-volume function at any adelic line bundle $\overline L=(L,\varphi)$ such that $L$ is semi-ample and big and $\varphi$ is semi-positive. We then deduce the measurability of certain fiber integrals. This measurability result is important in the construction of global adelic space, that we present in the third section. We then prove, in the fourth section, an equidistribution theorem for a generic sequence of subvarieties in the setting of adelic curves. Finally, in the fifth section, we deduce Bogomolov's conjecture over a countable field of characteristic zero.

\section{Differentiability of $\chi$-volume}

In this section, we fix an adelic curve $S=(K,(\Omega,\mathcal A,\nu),\phi)$ such that the underlying field $K$ is countable and perfect.
Let $X$ be an integral projective scheme over $\Spec K$ and $d$ be the dimension of $X$. We show that the $\chi$-volume function $\operatorname{\widehat{vol}}_{\chi}(\ndot)$ is G\^ateaux differentiable along any direction defined by metric families on the open cone $\operatorname{\widehat{Pic}}_A(X)$ of adelic line bundles $\overline L$ on $X$ such that $L$ is semi-ample and big.

\subsection{Function associated with a metric family}

\begin{defi}
For any invertible $\mathcal O_X$-module $L$, we denote by $\mathscr M(L)$ the set of metric families $\varphi$ on $L$ such that $(L,\varphi)$ forms an adelic line bundle on $X$. 
If $L_1$ and $L_2$ are two invertible $\mathcal O_X$-modules, and $(\varphi_1,\varphi_2)\in\mathscr M(L_1)\times\mathscr M(L_2)$, we denote by $\varphi_1+\varphi_2$ the tensor product of the metric families $\varphi_1$ and $\varphi_2$, which is an element of $\mathscr M(L_1\otimes L_2)$.
\end{defi}

\begin{defi}
Let $U$ be a non-empty Zariski open set of $X$.
Let \[\mathscr{C}^0(U) := \prod\limits_{\omega \in \Omega} C^0(U_\omega^{\mathrm{an}})\]
and $f = (f_{\omega})_{\omega \in \Omega} \in \mathscr{C}^0(U)$.
We say that $f$ is \emph{measurable} if the following conditions are satisfied (see \cite[Definition 6.1.27]{CMArakelovAdelic}):
\begin{enumerate}[label=\rm(\alph*)]
\item For any closed point $P$ of $U$, the function from $\Omega_{K(P)}$ to $\mathbb R$ sending $v\in\Omega_{K(P)}$ to $f(x_{P,v})$ is $\mathcal A_{K(P)}$-measurable, where $x_{P,v}$ denotes the point of $X_{\omega}^{\mathrm{an}}$ represented by $(P_v,|\ndot|_v)$, $P_v$ being the point of $X_\omega(K(P)_v)$ extending $P$. 
\item We consider the trivial absolute value on $K$ and the corresponding Berkovich analytification $X^{\mathrm{an}}$. For any $x\in X^{\mathrm{an}}$, 
whose underlying scheme point is of dimension $1$, and such that the absolute value (in the structure of $x$) of the residue field has a rational exponent, the function $(\omega\in\Omega_0)\mapsto f_\omega(x)$ is $\mathcal A|_{\Omega_0}$-measurable, where $\Omega_0$ denotes the set of $\omega\in\Omega$ such that $|\ndot|_{\omega}$ is trivial.  
\end{enumerate}   
Under the assumption $U = X$,
the family $f$ is said to be \emph{dominanted} if there exists an integrable function $g$ on $(\Omega,\mathcal A)$ such that 
$\sup_{x\in X_\omega^{\mathrm{an}}}|f_\omega|(x)\leqslant g(\omega)$ for all $\omega\in\Omega$.
Here we set
\[
\begin{cases}
\mathscr{C}^0_{\mathrm{m}}(U) := \{ f  \in \mathscr{C}^0(U) \mid \text{$f$ is measurable} \}, \\[1ex]
\mathscr{C}^0_{\mathrm{d}}(X) :=  \{ f  \in \mathscr{C}^0(X) \mid \text{$f$ is dominated} \}, \\[1ex]
\mathscr{C}^0_{\mathrm{a}}(X) := \{ f  \in \mathscr{C}^0(X) \mid \text{$f$ is measurable and dominated} \}.
\end{cases}
\]
Sometimes $\mathscr{C}^0(U)$, $\mathscr{C}^0_{\mathrm{m}}(U)$, $\mathscr{C}^0_{\mathrm{d}}(X)$ and $\mathscr{C}^0_{\mathrm{a}}(X)$
are denoted by $\mathscr{C}^0(U;\Omega)$, $\mathscr{C}^0_{\mathrm{m}}(U;\Omega)$, $\mathscr{C}^0_{\mathrm{d}}(X;\Omega)$ and $\mathscr{C}^0_{\mathrm{a}}(X;\Omega)$, respectively,
to emphasize the parameter space $\Omega$.
\end{defi}

For $f \in \mathscr{C}^0(X)$, a metric family of $\mathcal O_{X}$ given by
$(e^{-f_\omega}|\ndot|_{\omega})_{\omega \in \Omega}$ is denoted by $\mathrm{e}^{-f}$.
Note that a map given by $f \mapsto (\mathcal{O}_X, \mathrm{e}^{-f})$ yields
a bijection between $\mathscr{C}^0(X)$ and the set of all metric families of $\mathcal{O}_X$.

\begin{prop}\label{prop:dominated:criterion}
For $f \in \mathscr{C}^0(X)$, we have the following equivalence:
{\allowdisplaybreaks\[
\begin{cases}
\text{$f \in \mathscr{C}^0_{\mathrm{m}}(X)$ $\Longleftrightarrow$ $(\mathcal{O}_X, \mathrm{e}^{-f})$ is measurable}, \\
\text{$f \in \mathscr{C}^0_{\mathrm{d}}(X)$ $\Longleftrightarrow$ $(\mathcal{O}_X, \mathrm{e}^{-f})$ is dominated},\\
\text{$f \in \mathscr{C}^0_{\mathrm{a}}(X)$ $\Longleftrightarrow$ $(\mathcal{O}_X, \mathrm{e}^{-f})$ is an adelic line bundle}.
\end{cases}
\]}%
\end{prop}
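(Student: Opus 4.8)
The strategy is to read off each of the three conditions on the metric family $\mathrm{e}^{-f}=(\mathrm{e}^{-f_\omega}|\ndot|_\omega)_{\omega\in\Omega}$ on $\mathcal O_X$ from the corresponding condition on $f$, by using the canonical global frame $1\in H^0(X,\mathcal O_X)$, which trivializes $\mathcal O_X$ over all of $X$. First I would record the elementary identity that, for every $\omega\in\Omega$ and every point $x$ of $X_\omega^{\mathrm{an}}$, one has $\|1\|_{\mathrm{e}^{-f_\omega}|\ndot|_\omega}(x)=\mathrm{e}^{-f_\omega(x)}$, since $|1(x)|_\omega=1$; equivalently, the local weight of the metric $\mathrm{e}^{-f_\omega}$ with respect to the frame $1$ (and the reference metric $|\ndot|_\omega$ on $\mathcal O_X$) is exactly $f_\omega$. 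The same computation applies verbatim at the Berkovich points over the trivial absolute values appearing in condition (b).

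For the measurability statement I would invoke the fact, recorded in \cite{CMArakelovAdelic}, that a continuous metric family on a line bundle is measurable precisely when, for one (equivalently, any) trivializing Zariski covering together with a choice of local frames, the resulting family of local weights satisfies conditions (a) and (b) of the Definition above. Applying this to $\mathcal O_X$ with the trivial covering $\{X\}$ and the frame $1$, whose weight family is $f$ by the previous paragraph, we obtain that $(\mathcal O_X,\mathrm{e}^{-f})$ is measurable if and only if $f$ satisfies (a) and (b), i.e. if and only if $f\in\mathscr C^0_{\mathrm{m}}(X)$. For the dominancy statement I would argue in the same way: by \cite{CMArakelovAdelic}, a metric family $\psi$ on $\mathcal O_X$ is dominated if and only if the function $\omega\mapsto\sup_{x\in X_\omega^{\mathrm{an}}}\bigl|\ln\|1\|_{\psi}(x)\bigr|$ on $(\Omega,\mathcal A)$ is bounded above by a $\nu$-integrable function. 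For $\psi=\mathrm{e}^{-f}$ this quantity equals $\sup_{x\in X_\omega^{\mathrm{an}}}|f_\omega|(x)$, so the condition is exactly the definition of $f\in\mathscr C^0_{\mathrm{d}}(X)$. Finally, since an adelic line bundle is by definition an invertible sheaf equipped with a metric family that is both measurable and dominated, the third equivalence follows by combining the first two together with the identity $\mathscr C^0_{\mathrm{a}}(X)=\mathscr C^0_{\mathrm{m}}(X)\cap\mathscr C^0_{\mathrm{d}}(X)$.

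The only point that requires care --- and the one I would expect to be the main obstacle --- is justifying the reduction, in the definitions of measurable and of dominated metric, from an arbitrary trivializing covering with its transition functions to the single global frame $1$ over $U=X$: one must know that measurability and dominancy of a metric do not depend on the chosen covering and frames, and that for $\mathcal O_X$ the trivial covering $\{X\}$ is an admissible choice. Both facts belong to the foundational material of \cite{CMArakelovAdelic}; granting them, the proposition is a direct unwinding of the definitions.
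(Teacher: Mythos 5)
Your proof is correct and takes essentially the same approach as the paper: the paper calls the first equivalence obvious, proves the second by comparing $\mathrm{e}^{-f}$ with the trivial (zero) metric on $\mathcal O_X$ via \cite[Proposition~6.1.12]{CMArakelovAdelic} and the fact that the trivial metric is dominated, and gets the third from the first two. Your write-up spells out what the paper treats as "obvious" and implicitly relies on the same key input -- that the trivial metric family on $\mathcal O_X$ is dominated, so that the distance $d_\omega(\mathrm{e}^{-f_\omega}|\ndot|_\omega,\ |\ndot|_\omega)=\sup_x|f_\omega(x)|$ controls dominancy -- which you flag in your closing paragraph as foundational material from \cite{CMArakelovAdelic}.
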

\begin{proof}
The first equivalence is obvious. The second is
a consequence of \cite[Proposition 6.1.12]{CMArakelovAdelic} and the fact that the zero metric on $\mathcal O_X$ is dominated.
The third follows from the first and second.
\end{proof}

By abuse of notation, we often identify $\mathscr{C}^0_{\mathrm{a}}(X)$ with $\mathscr M(\mathcal O_X)$.
Let $\Omega'$ be a measurable subset of $\Omega$ (i.e. $\Omega' \in \mathcal A$), and $S ' = (K, (\Omega', \mathcal A', \nu'), \phi')$ be the restriction of $S$ to $\Omega'$.
Note that $S'$ is also an adelic curve. 
We consider a natural correspondence 
$\mathscr{C}^0(X;\Omega') \to \mathscr{C}^0(X;\Omega)$ given by the following way:
if $f  = (f_\omega)_{\omega \in \Omega'}$, then 
$\tilde{f} = (\tilde{f}_\omega)_{\omega \in \Omega}$ is defined to be
\[
\tilde{f}_\omega := \begin{cases}
f_{\omega} & \text{if $\omega \in \Omega'$}, \\
0 & \text{otherwise}.
\end{cases}
\]
Then, as a  corollary of the above proposition, we have the following.

\begin{coro}\label{coro:extension:by:zero}
The above correspondence yields
$\mathscr{C}_{\mathrm{a}}^0(X;\Omega')\subseteq \mathscr{C}_{\mathrm{a}}^0(X;\Omega)$.
\end{coro}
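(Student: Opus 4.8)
The plan is to invoke Proposition \ref{prop:dominated:criterion} together with Proposition \ref{prop:dominated:criterion} applied over the two adelic curves $S$ and $S'$, and check that extension by zero preserves both the measurability and the dominancy conditions. Concretely, take $f = (f_\omega)_{\omega \in \Omega'} \in \mathscr{C}^0_{\mathrm{a}}(X;\Omega')$ and form $\tilde f = (\tilde f_\omega)_{\omega \in \Omega}$ by setting $\tilde f_\omega = 0$ for $\omega \in \Omega \setminus \Omega'$. By Proposition \ref{prop:dominated:criterion} (applied to the adelic curve $S'$), the metric family $(\mathcal O_X, \mathrm{e}^{-f})$ on $X$ over $S'$ is an adelic line bundle, i.e. $f \in \mathscr{C}^0_{\mathrm{m}}(X;\Omega')$ and $f \in \mathscr{C}^0_{\mathrm{d}}(X;\Omega')$. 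It then suffices to show $\tilde f \in \mathscr{C}^0_{\mathrm{m}}(X;\Omega)$ and $\tilde f \in \mathscr{C}^0_{\mathrm{d}}(X;\Omega)$, after which Proposition \ref{prop:dominated:criterion} over $S$ gives $\tilde f \in \mathscr{C}^0_{\mathrm{a}}(X;\Omega)$, which is the assertion.

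For dominancy: by hypothesis there is a $\nu'$-integrable $g'$ on $(\Omega', \mathcal A')$ with $\sup_{x \in X_\omega^{\mathrm{an}}} |f_\omega|(x) \leqslant g'(\omega)$ for all $\omega \in \Omega'$. Extend $g'$ by zero to a function $g$ on $(\Omega, \mathcal A)$; since $\mathcal A' = \{ A \cap \Omega' : A \in \mathcal A\}$ and $\nu'$ is the restriction of $\nu$, the extended $g$ is $\mathcal A$-measurable and $\nu$-integrable (with the same integral), and on $\Omega \setminus \Omega'$ we have $\sup |\tilde f_\omega| = 0 = g(\omega)$. Hence $\tilde f \in \mathscr{C}^0_{\mathrm{d}}(X;\Omega)$. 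For measurability, one verifies conditions (a) and (b) of the definition for $\tilde f$. For (a), fix a closed point $P$ of $X$: the map $v \mapsto f(x_{P,v})$ on $\Omega_{K(P)}$ is built by composing, over each $\omega$, the fiber data; splitting $\Omega_{K(P)}$ according to whether the underlying $\omega$ lies in $\Omega'$ or not, the function agrees with the $S'$-version over the part lying above $\Omega'$ (which is $\mathcal A_{K(P)}'$-measurable by hypothesis, hence measurable for the ambient $\sigma$-algebra) and is identically $0$ above $\Omega \setminus \Omega'$; the two pieces glue to an $\mathcal A_{K(P)}$-measurable function. Condition (b) concerns only the set $\Omega_0$ of $\omega$ with trivial absolute value; writing $\Omega_0 = (\Omega_0 \cap \Omega') \sqcup (\Omega_0 \setminus \Omega')$, the same splitting argument applies, using that $\mathcal A|_{\Omega_0 \cap \Omega'}$ refines coherently with $\mathcal A'|_{\Omega_0}$.

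I do not expect a serious obstacle here; the content is entirely the bookkeeping that a restricted adelic curve $S'$ has $\sigma$-algebra and measure that are literally the restrictions of those of $S$, so ``measurable and integrable over $\Omega'$'' plus ``zero over $\Omega \setminus \Omega'$'' assembles to ``measurable and integrable over $\Omega$''. The one point deserving a line of care is that the auxiliary $\sigma$-algebras $\mathcal A_{K(P)}$ attached to a finite extension $K(P)/K$ behave well under restriction — i.e. that $(\mathcal A_{K(P)})' = (\mathcal A')_{K(P)}$ in the relevant sense — which is part of the construction of the restricted adelic curve in \cite{CMArakelovAdelic} and can be cited. Given that, the corollary follows immediately from Proposition \ref{prop:dominated:criterion}.
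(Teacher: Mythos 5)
Your proof is correct and follows the same route the paper intends: the corollary is stated with no explicit proof block as a direct consequence of Proposition~\ref{prop:dominated:criterion}, and you are simply spelling out the extension-by-zero bookkeeping (measurability via splitting over $\Omega'$ and $\Omega\setminus\Omega'$, dominancy via extending the dominating function by zero) that the paper leaves implicit. The only minor quibble is that the final invocation of Proposition~\ref{prop:dominated:criterion} over $S$ is unnecessary once you have verified that $\tilde f$ is measurable and dominated, since $\mathscr{C}^0_{\mathrm{a}}(X;\Omega)$ is defined as exactly that intersection.
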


\subsection{Measurability of partial derivatives}
We assume that $K$ is algebraically closed and  $\Omega = \Omega_\infty$. 
We fix a root $\sqrt{-1}$ of the equation $T^2 + 1 = 0$ in $K$, and a family $( \iota_{\omega} )_{\omega \in \Omega_{\infty}}$ of embeddings $K \to \CC$ which satisfies the following conditions
(c.f. \cite[Lemma~4.2.5]{CMIntersection}):
\begin{enumerate}[label=\rm(\roman*)]
\item for any $\omega\in\Omega_\infty$,
$\iota_{\omega}(\sqrt{-1}) = i$, where $i\in\mathbb C$ denotes the usual imaginary unit,
\item for any $\omega \in \Omega_{\infty}$, $|\ndot|_{\omega} = |\iota_{\omega}(\ndot)|$,
\item for any $a \in K$, the function $(\omega \in \Omega_{\infty}) \mapsto
\iota_{\omega}(a)$ is measurable. 
\end{enumerate}

Let $\pi : X \to \PP^d_K$ be a finite projection and $V$ be an affine Zariski open set of $\PP^d_K$ such that
if we set $U = \pi^{-1}(V)$, then $U$ is smooth over $K$ and $\pi : U \to V$ is \'{e}tale.
Let \[\mathscr{C}^\infty(U) := \prod\limits_{\omega \in \Omega} C^\infty(U_\omega^{\mathrm{an}})\quad\text{and}\quad
\mathscr{C}^\infty_{\mathrm{m}}(U) := \mathscr{C}^\infty(U) \cap \mathscr{C}^o_{\mathrm{m}}(U).
\]
Let $x$ be a closed point of $U$. For any $\omega\in\Omega$, we denote by $x_\omega$ the unique point of $X_\omega^{\mathrm{an}}$ whose underly scheme point of $X_\omega$ lies over $x$.
Let $(z_j)_{j=1}^d$ be a coordinate of $V$. Note $\pi^*(z_1), \ldots, \pi^*(z_d)$ yields a local \'{e}tale coordinate of $U$ around $x$.
By abuse of notation, $\pi^*(z_1), \ldots, \pi^*(z_d)$ are denoted by $z_1, \ldots, z_d$. 

\begin{prop}\label{prop:measurable:partial:derivative}
If $f = (f_\omega) \in \mathscr{C}^\infty_{\mathrm{m}}(U)$, then the function given by
\[(\omega\in\Omega)\longmapsto \frac{\partial^2(f_{\omega})}{\partial z_j\partial\overline{z}_\ell}(x_{\omega})\]
is $\mathcal A$-measurable.
\end{prop}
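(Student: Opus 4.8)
The plan is to reduce the measurability of $\omega \mapsto \frac{\partial^2 f_\omega}{\partial z_j \partial \overline{z}_\ell}(x_\omega)$ to the measurability hypothesis on $f$ itself (condition (a) in the definition of measurable family, applied to closed points near $x$), by expressing the partial derivative as a limit of finite difference quotients evaluated at nearby closed points, and then using that a pointwise limit of measurable functions is measurable.

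First I would set up the difference-quotient approximation. Since $z_1, \dots, z_d$ form an étale coordinate system on $U$ around $x$, for each $\omega$ the analytification $U_\omega^{\mathrm{an}}$ is identified near $x_\omega$ with a neighborhood in $\mathbb{C}^d$ (via $\iota_\omega$ and the fixed embeddings), and the second-order Wirtinger derivative $\frac{\partial^2 f_\omega}{\partial z_j \partial \overline{z}_\ell}(x_\omega)$ can be written as a genuine limit over complex parameters of a finite linear combination of values $f_\omega(y)$ where $y$ ranges over points obtained from $x$ by moving in the coordinate directions. The subtlety is that the perturbed points must be closed points of $U$ (defined over $K$), not arbitrary analytic points, so that hypothesis (a) applies. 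Here I would use that $K$ is algebraically closed, so closed points of $U$ are $K$-rational, together with the fact that $\pi^*(z_1), \dots, \pi^*(z_d)$ give an étale chart: the map $U \to \mathbb{A}^d$ is a local isomorphism in the étale (hence, since $K$ is algebraically closed, also analytic-local) sense, so for each sufficiently small $K$-rational perturbation $t = (t_1, \dots, t_d)$ of the coordinates there is a well-defined closed point $x^{(t)}$ of $U$ with $z_i(x^{(t)}) = z_i(x) + t_i$, at least for $t$ in a suitable $K$-algebraic neighborhood. Care is needed because "small" depends on $\omega$; I would instead use a single algebraic étale neighborhood and remark that for a fixed $K$-rational $t$ small in the algebraic sense, the point $x^{(t)}$ exists and, for every $\omega$, $x^{(t)}_\omega$ lies in the coordinate chart where the Taylor expansion of $f_\omega$ is valid — possibly after shrinking $U$ once and for all, or by a compactness/uniformity argument over the (compact) analytifications.

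Next I would invoke hypothesis (a): for each fixed closed point $P$ of $U$, the function $(\omega \in \Omega) \mapsto f_\omega(P_\omega) = f(x_{P, \omega})$ is $\mathcal{A}$-measurable (using $K = \overline{K}$ so that $K(P) = K$ and $\Omega_{K(P)} = \Omega$, and $\Omega = \Omega_\infty$). Applying this to the finitely many points $x^{(t)}$ appearing in a fixed finite-difference approximation of order $h$ (with $t$ running over a fixed finite set of rational combinations of a real step $h$ and the imaginary unit $\sqrt{-1} \in K$), we get that each difference quotient $D_h f_\omega$ is, as a function of $\omega$, a finite $\mathbb{C}$-linear combination — with coefficients depending measurably on $\omega$ through the $\iota_\omega(\sqrt{-1}) = i$ normalization and through $\iota_\omega$ applied to the (fixed, $K$-rational) coordinate increments, which is measurable by hypothesis (iii) — of measurable functions, hence measurable. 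Then I would let $h \to 0$ along a countable sequence (e.g. $h = 1/m$, $m \to \infty$): since $f_\omega$ is $C^\infty$ near $x_\omega$, $D_{1/m} f_\omega \to \frac{\partial^2 f_\omega}{\partial z_j \partial \overline{z}_\ell}(x_\omega)$ pointwise in $\omega$, and a pointwise limit of $\mathcal{A}$-measurable functions is $\mathcal{A}$-measurable. This completes the argument.

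The main obstacle I expect is the uniformity issue in the first step: ensuring that a \emph{single} finite collection of $K$-rational perturbed closed points can be used simultaneously for all $\omega \in \Omega$ to approximate the derivative, i.e. that the radius of validity of the local analytic coordinate chart and the Taylor estimates can be taken independent of $\omega$ (or handled by passing to a measurable partition of $\Omega$). Resolving this cleanly likely requires working with a fixed algebraic étale neighborhood of $x$ and exploiting that the embeddings $\iota_\omega$ are isometric for $|\ndot|_\omega$ (condition (ii)), so that the relevant metric balls have $\omega$-independent radius in the algebraic coordinates; alternatively one restricts to $\omega$ in a measurable subset where a uniform bound holds and covers $\Omega$ by countably many such subsets. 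The remaining steps — linearity and measurability of the difference quotients, and the pointwise limit — are routine once this is in place.
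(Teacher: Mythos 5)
Your approach is essentially the paper's: express the Wirtinger derivative as a limit of finite-difference quotients evaluated at $K$-rational closed points, invoke condition (a) of measurability at each such point, and pass to a countable limit. You also correctly identify the genuine technical obstacle (that ``sufficiently small'' perturbations depend on $\omega$), and your fallback suggestion --- covering $\Omega$ by countably many measurable sets on which a uniform radius is available --- is exactly what the paper does, via $A_n := \{\omega : r_\omega \geqslant 1/n\}$, where $r_\omega = \rho(h_\omega)$ is the radius of a ball around $0$ inside $(V'_\omega)^{\mathrm{an}}$ and $\rho$ is measurable in $\omega$ because it is continuous in the coefficients of $h_\omega$.

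One caveat: your first proposed fix, that $\pi:U\to\mathbb{A}^d$ is ``a local isomorphism in the étale (hence [\ldots] also analytic-local) sense, so [\ldots] there is a well-defined closed point $x^{(t)}$ of $U$'' for $t$ in a ``suitable $K$-algebraic neighborhood,'' does not hold as stated: an étale map is not Zariski-locally injective, and for any nonempty algebraic open neighborhood of $x$ the fiber over a nearby coordinate value $t$ will generally contain several closed points. Picking out the correct branch (i.e.\ the one lying in $T_\omega$, the analytic connected component of $\pi_\omega^{-1}(W_\omega)$ through $x_\omega$) and checking it gives the \emph{same} closed $K$-point for every $\omega$ in the measurable subset is precisely where the work is, and it cannot be done by shrinking $U$ once and for all. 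Your alternative route, partitioning $\Omega$, is the viable one.
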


\begin{proof}
We may assume that $\pi(x) = (0, \ldots, 0)$.
As $V$ is a Zariski open set of $\mathbb A_K^d$, we can find a non-negative integer $e$ and a non-zero polynomial
\[
h = \sum_{\substack{i_1, \ldots, i_d \in \mathbb{Z}_{\geqslant 0},\\
i_1 + \cdots + i_d \leqslant e}} a_{i_1, \ldots, i_{d}} X_1^{i_1} \cdots X_{d}^{i_{d}}\quad(a_{i_1, \ldots, i_{d}}  \in K)
\]
such that $h(0, \ldots, 0) \not= 0$ and
\[V'  := \{ (x_1, \ldots, x_d) \in K^d \mid h(x_1, \ldots, x_n) \not= 0 \} \subseteq V.\]
If we set
\[
h_{\omega} =  \sum_{i_1, \ldots, i_d \in \mathbb{Z}_{\geqslant 0}} \iota_\omega(a_{i_1, \ldots, i_{d}}) X_1^{i_1} \cdots X_{d}^{i_{d}}
\]
for each $\omega \in \Omega$, then
${V_\omega'}^{\mathrm{an}} = \{ (\zeta_1, \ldots, \zeta_d) \in \mathbb{C}^d \mid h_\omega(\zeta_1, \ldots, \zeta_n) \not= 0 \}$.
For a polynomial 
\[
g = \sum_{\substack{i_1, \ldots, i_d \in \mathbb{Z}_{\geqslant 0}, \\i_1+\cdots + i_d \leqslant e}} b_{i_1, \ldots, i_{d}} X_1^{i_1} \cdots X_{d}^{i_{d}} \in \CC[X_1, \ldots, X_d],
\]
we define $\rho(g)$ to be
\[
\rho(g) := \inf \{ (|\zeta_1|^2 + \cdots + |\zeta_d|^2)^{1/2} \mid  g(\zeta_1, \ldots, \zeta_d) = 0 \}.\] Note that $\rho(g) = 0$ if and only if $g(0, \ldots, 0) = 0$, and
$\rho$ is continuous with respect to the coefficients $(b_{i_1, \ldots, i_{d}})$, so
if we set $r_\omega = \rho(h_\omega)$, then $r_\omega > 0$ and the function given by $(\omega \in \Omega) \mapsto r_{\omega}$ is $\mathcal{A}$-measurable. Moreover,
\[W_{\omega} = \{ (\zeta_1, \ldots, \zeta_d) \in \mathbb{C}^d \mid |\zeta_1|^2 + \cdots + |\zeta_d|^2 < r_\omega^2 \}
 \subseteq {V_\omega'}^{\mathrm{an}}.\]%
As $W_{\omega}$ is simply connected and $\pi_{\omega}^{-1}(W_\omega)$ is \'etale over $W_\omega$,
$\pi_{\omega}^{-1}(W_\omega)$ is a disjoint union of connected open sets.
Let $T_{\omega}$ be the connected component of $\pi_{\omega}^{-1}(W_\omega)$ such that $x_{\omega} \in T_{\omega}$.
Then $\pi_\omega : T_\omega \to W_\omega$ is an isomorphism.

Let $n$ be a positive integer and $A_n := \{\omega \in \Omega \mid r_{\omega} \geqslant 1/n \} \in \mathcal A$.
Let \[(p_1+\sqrt{-1}q_1,\ldots,p_d+\sqrt{-1}q_d)\in \QQ(\sqrt{-1})^d\] such that $(p_1^2 + q_1^2) + \cdots + (p_d^2 + q_d^2) \leqslant 1/n^2$. Then, for each $\omega \in A_n$,
we can find $y_{\omega} \in T_{\omega}$ such that $z_\omega(y_\omega) = (p_1+i q_1,\ldots,p_d+ iq_d)$.
Further, since $y_{\omega} \in T_{\omega}$ for all $\omega \in A_n$, there exists $y \in U$ such that $z(y) = (p_1+\sqrt{-1}q_1,\ldots,p_d+\sqrt{-1}q_d)$ and
$y$ is the image of $y_\omega$ by $X_\omega \to X$ for all $\omega \in A_n$.
Therefore, for any $\varepsilon\in\mathbb Q$ with $0 < \varepsilon < 1/(\sqrt{2}n)$, we
 can find $y_{1,\varepsilon}, y_{2,\varepsilon}, y_{3,\varepsilon} \in U'(K)$ such that
\[z(y_{1,\varepsilon}) = \varepsilon e_j, \ z(y_{2,\varepsilon}) = \sqrt{-1}\varepsilon e_\ell,\  z(y_{3,\varepsilon} ) = \varepsilon e_j+\sqrt{-1}\varepsilon e_{\ell} \]
and 
\[
\lim_{\varepsilon\to 0} (y_{1,\varepsilon})_\omega  = x_\omega,\quad
\lim_{\varepsilon\to 0} (y_{2,\varepsilon})_\omega  = x_\omega\quad\text{and}\quad
\lim_{\varepsilon\to 0} (y_{3,\varepsilon})_\omega  = x_\omega.
\] 
Thus one obtains
\[\frac{\partial^2(f_{\omega})}{\partial z_j\partial\overline{z}_\ell}(x_{\omega})
=\lim_{\varepsilon\rightarrow 0}\frac{1}{\varepsilon^2}\Big[f_{\omega}((y_{3, \varepsilon})_{\omega})-
f_{\omega}((y_{1,\varepsilon})_\omega)-f_{\omega}((y_{2, \varepsilon})_\omega)+f_{\omega}(x_\omega)\Big] \]
for $\omega \in A_{n}$. Note that, for any rational point $y$ of $U$, the function $(\omega\in\Omega)\mapsto f_{\omega}(y_\omega)$ is $\mathcal A$-measurable. Therefore, if we set
\[
b_n(\omega) = \begin{cases}
{\displaystyle \frac{\partial^2(f_{\omega})}{\partial z_j\partial\overline{z}_\ell}(x_{\omega})} & \text{$\omega \in A_n$},\\
0 & \text{otherwise},
\end{cases}
\]
then $b_n$ is $\mathcal A$-measurable on $\Omega$, so
the assertion is proved because \[\lim_{n\to\infty} b_n(\omega)= \frac{\partial^2(f_{\omega})}{\partial z_j\partial\overline{z}_\ell}(x_{\omega}).\]
\end{proof}

\subsection{Relative volume and $\chi$-volume}

If $L$ is an invertible $\mathcal O_X$-module, we denote by 
\[V_\sbullet(L)=\bigoplus_{n\in\mathbb N}H^0(X,L^{\otimes n})\]
the total graded linear series of $L$. Recall that the \emph{volume} of $L$ is defined as
\[\operatorname{vol}(L)=\limsup_{n\rightarrow+\infty}\frac{\dim_K(H^0(X,L^{\otimes n}))}{n^d/d!}.\]
The invertible $\mathcal O_X$-module $L$ is said to be \emph{big} if $\operatorname{vol}(L)>0$.

If $L$ is a big invertible $\mathcal O_X$-module and $V_\sbullet$ is a graded linear series of $L$ (namely a graded sub-$K$-algebra of $V_\sbullet(L)$), we denote by $\Delta(V_\sbullet)$ the Newton-Okounkov body of $V_\sbullet$. We refer to \cite{MR2950767,MR2571958} for the construction of Newton-Okounkov body under the assumption that the function field of $X$ admits a $\mathbb Z^d$-valuation of one-dimension leaves over $K$, see also \cite{MR3938628,MR4104565} for the arithmetic construction which applies to the general case. Recall that $\Delta(V_\sbullet)$ is a closed convex subset of $\mathbb R^d$, whose Lebesgue measure is equal to
\[\frac{1}{d!}\operatorname{vol}(V_\sbullet):=\limsup_{n\rightarrow+\infty}\frac{\dim_K(V_n)}{n^d}.\] 
In the case where $V_\sbullet$ 
is the total graded linear series of $L$, the Newton-Okounkov body is denoted by $\Delta(L)$. Recall that, if $L$ and $L'$ are big invertible $\mathcal O_X$-modules, and $V_\sbullet$, $V_\sbullet'$ and $W_\sbullet$ are respectively graded linear series of $L$, $L'$ and $L\otimes L'$, such that 
\[\forall\,n\in\mathbb N,\quad V_n\cdot V_n'\subseteq W_n,\]
then one has 
\[\Delta(V_\sbullet)+\Delta(V_\sbullet'):=\{x+y\,:\,(x,y)\in\Delta(V_\sbullet)+\Delta(V_\sbullet')\}\subseteq\Delta(W_\sbullet).\]

Let $(L,\varphi)$ be an adelic line bundle on $X$. For any $n\in\mathbb N$, we denote by $\xi_{n\varphi}$ the norm family $(\|\ndot\|_{n\varphi_\omega})_{\omega\in\Omega}$ on $V_n(L)=H^0(X,L^{\otimes n})$, so that $(V_n(L),\xi_{n\varphi})$ forms an adelic vector bundle on $S$. Let $\mathcal F$ be the Harder-Narasimhan $\mathbb R$-filtration of this adelic line bundle. Recall that 
\[\forall\,t\in\mathbb R,\quad \mathcal F^t(V_n(L),\xi_{n\varphi})=\sum_{\begin{subarray}{c}
\boldsymbol{0}\neq F\subseteq V_n(L)\\
\widehat{\mu}_{\min}(\overline F)\geqslant t
\end{subarray}}F,\]
where $F$ runs over the set of non-zero vector subspaces of $V_n(L)$ such that the minimal slope of $F$ equipped with restricted norm family of $\xi_{n\varphi}$ is not less than $t$. Note that this $\mathbb R$-filtration determines an ultrametric norm on $V_n(L)$ (where we consider the trivial absolute value on $K$), which we denote by $\|\ndot\|_{n\varphi}$. Denote by $\|\ndot\|_{n\varphi,\operatorname{sp}}$ the corresponding spectral norm, which is defined as 
\[\|s\|_{n\varphi,\operatorname{sp}}:=\lim_{N\rightarrow+\infty}\|s^{N}\|_{nN\varphi}^{\frac 1N}.\]
For any $t\in\mathbb R$, let 
\[V_n^{\varphi,t}(L):=\{s\in V_n(L)\,:\,\|s\|_{n\varphi,\operatorname{sp}}\leqslant \mathrm{e}^{-nt}\}.\]
Note that $V_\sbullet^{\varphi,t}(L)$ is a graded linear series of $L$.

\begin{defi}Let $(L,\varphi)$ be an adelic line bundle on $X$ such that $L$ is big.
We call \emph{concave transform} of the metric family $\varphi$ the  function $G_{\varphi}:\Delta(L)\rightarrow\mathbb R$ defined as follows:
\[G_{\varphi}(x)=\sup\{t\in\mathbb R\,:\,x\in\Delta(V_\sbullet^{\varphi,t}(L))\}.\]
This function is concave. Moreover, for any $x\in\Delta(L)$, one has 
\[\liminf_{n\rightarrow+\infty}\frac{\widehat{\mu}_{\min}(V_n(L),\xi_{n\varphi})}{n}\leqslant G_{\varphi}(x)\leqslant \limsup_{n\rightarrow+\infty}\frac{\widehat{\mu}_{\max}(V_n(L),\xi_{n\varphi})}{n}.\] For any positive integer $n$, one has $\Delta(L^{\otimes n})=n\Delta(L)$ and \begin{equation}\label{Equ: concave transform homogeneity}
G_{n\varphi}(nx)=n G_{\varphi}(x).\end{equation}
In the case where
\[\liminf_{n\rightarrow+\infty}\frac{\widehat{\mu}_{\min}(V_n(L),\xi_{n\varphi})}{n}>-\infty,\] the following equality holds:
\begin{equation}\label{Equ: integral G is equal to vol chi}\int_{\Delta(L)}G_{\varphi}(x)\,\mathrm{d}x=\frac{1}{(d+1)!}\widehat{\mathrm{vol}}_\chi(L,\varphi).\end{equation} 
\end{defi}

\begin{defi}
We say that an invertible $\mathcal O_X$-module $L$ is \emph{slope-bounded} if there exists a metric family $\varphi\in\mathscr M(L)$ such that 
\[\liminf_{n\rightarrow+\infty}\frac{\widehat{\mu}_{\min}(V_n(L),\xi_{n\varphi})}{n}>-\infty.\]
Note that, for any element $\psi\in\mathscr M(L)$, one has 
\[\forall\,n\in\mathbb N,\quad \big|\widehat{\mu}_{\min}(V_n(L),\xi_{n\varphi})-\widehat{\mu}_{\min}(V_n(L),\xi_{n\psi})\big|\leqslant n\int_{\Omega}\sup_{x\in X_\omega^{\mathrm{an}}}|\varphi_\omega-\psi_\omega|(x)\,\nu(\mathrm{d}\omega) \]
and therefore 
\[\liminf_{n\rightarrow+\infty}\frac{\widehat{\mu}_{\min}(V_n(L),\xi_{n\psi})}{n}>-\infty.\]
\end{defi}

\begin{exem}
Let $L$ be a big invertible $\mathcal O_X$-module. Assume that the graded linear series $V_\sbullet(L):=\bigoplus_{n\in\mathbb N}V_n(L)$ is of finite type over $K$. Then the invertible $\mathcal O_X$-module $L$ is slope-bounded. In particular, semiample and big invertible $\mathcal O_X$-modules are slope-bounded (cf. Remark~\ref{rema:semiample:of:finite:type} below). 
\end{exem}

\begin{rema}\label{rema:semiample:of:finite:type}
Let $L$ be a semiample invertible $\mathcal O_X$-module. Then $V_\sbullet(L)$ is of finite type over $K$.
Indeed, there exists a surjective morphism $f : X \to Y$ of projective integral schemes over $K$, an ample invertible $\OO_Y$-module $A$ and a positive integer $a$ such that $L^{\otimes a} = f^*(A)$.  Thus, by \cite[Lemma~3.4.1]{CMHS}, $R = V_\sbullet(L^{\otimes a})$ is of finite type over $K$ and
\[M_i = \bigoplus_{n=0}^{\infty} H^0(X, L^{\otimes i} \otimes L^{\otimes n a})\] is finitely generated over $R$ for every $0 \leqslant i < a$.
Therefore, $V_\sbullet(L) = M_0 \oplus \cdots \oplus M_{a-1}$ is also finitely generated over $R$, and hence the assertion follows.
\end{rema}

\begin{prop}\label{Pro: super additive G} 
Let $L_1$ and $L_2$ be  invertible $\mathcal O_X$-modules. If $(\varphi_1,\varphi_2)$ is an element of $\mathscr M(L_1)\times\mathscr M(L_2)$, then the following inequality holds:
\[\forall\,(x,y)\in\Delta(L_1)\times\Delta(L_2),\quad G_{\varphi_1+\varphi_2}(x+y)\geqslant G_{\varphi_1}(x)+G_{\varphi_2}(y).\]
\end{prop}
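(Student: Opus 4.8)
The plan is to unwind the definition of the concave transform and reduce the inequality to the inclusion of Newton--Okounkov bodies stated just before the definition, namely that $\Delta(V_\sbullet)+\Delta(V_\sbullet')\subseteq\Delta(W_\sbullet)$ whenever $V_n\cdot V_n'\subseteq W_n$ for all $n$. Concretely, fix $(x,y)\in\Delta(L_1)\times\Delta(L_2)$ and let $t_1<G_{\varphi_1}(x)$ and $t_2<G_{\varphi_2}(y)$ be arbitrary real numbers. By definition of $G_{\varphi_i}$ as a supremum, we have $x\in\Delta(V_\sbullet^{\varphi_1,t_1}(L_1))$ and $y\in\Delta(V_\sbullet^{\varphi_2,t_2}(L_2))$. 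The key step is then to show the multiplicativity-type inclusion
\[
V_n^{\varphi_1,t_1}(L_1)\cdot V_n^{\varphi_2,t_2}(L_2)\subseteq V_n^{\varphi_1+\varphi_2,\,t_1+t_2}(L_1\otimes L_2)
\qquad\text{for all }n\in\mathbb N,
\]
after which the cited inclusion of Newton--Okounkov bodies gives $x+y\in\Delta(V_\sbullet^{\varphi_1+\varphi_2,t_1+t_2}(L_1\otimes L_2))$, hence $G_{\varphi_1+\varphi_2}(x+y)\geqslant t_1+t_2$. Letting $t_1\uparrow G_{\varphi_1}(x)$ and $t_2\uparrow G_{\varphi_2}(y)$ yields the proposition.

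The heart of the argument, and the step I expect to be the main obstacle, is the displayed inclusion of graded pieces above, which is a statement about spectral norms and the trivially-valued ultrametric norms $\|\ndot\|_{n\varphi}$ coming from the Harder--Narasimhan filtration. One must check two compatibilities. First, for the analytic norm families: $\|s_1\otimes s_2\|_{n(\varphi_1+\varphi_2)_\omega}\leqslant\|s_1\|_{n\varphi_{1,\omega}}\cdot\|s_2\|_{n\varphi_{2,\omega}}$ at every place $\omega$, which is immediate from the definition of the tensor product metric family $\varphi_1+\varphi_2$ and the sup-norm (or the relevant quotient norm) on global sections. Second, and more delicately, one needs the sub-multiplicativity of the trivially-valued norms attached to the Harder--Narasimhan filtrations: if $F_1\subseteq V_n(L_1)$ and $F_2\subseteq V_n(L_2)$ are subspaces with $\widehat\mu_{\min}(\overline{F_1})\geqslant a$ and $\widehat\mu_{\min}(\overline{F_2})\geqslant b$, then the image of $F_1\otimes F_2$ in $V_n(L_1\otimes L_2)$ has minimal slope $\geqslant a+b$; this is the standard fact that the Harder--Narasimhan $\mathbb R$-filtration on adelic vector bundles is (asymptotically, at the level of spectral norms) multiplicative under tensor product, and it is what forces the passage to the spectral norm $\|\ndot\|_{n\varphi,\operatorname{sp}}$ rather than $\|\ndot\|_{n\varphi}$ itself. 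Combining the two, if $\|s_i\|_{n\varphi_i,\operatorname{sp}}\leqslant\mathrm e^{-nt_i}$ then $\|s_1 s_2\|_{n(\varphi_1+\varphi_2),\operatorname{sp}}\leqslant\mathrm e^{-n(t_1+t_2)}$, which is exactly membership in $V_n^{\varphi_1+\varphi_2,t_1+t_2}(L_1\otimes L_2)$.

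I would organize the write-up as follows: (1) recall the definition of $\|\ndot\|_{n\varphi,\operatorname{sp}}$ and record the elementary sub-multiplicativity $\|s_1 s_2\|_{nN(\varphi_1+\varphi_2)}\leqslant\|s_1^N\|_{nN\varphi_1}\cdot\|s_2^N\|_{nN\varphi_2}$ at each $\omega$, taking $N$-th roots and $N\to\infty$ to get sub-multiplicativity of spectral norms; here one should cite the analogous statement for adelic vector bundles in \cite{CMHS} or \cite{CMArakelovAdelic} if it is available in the exact form needed, to avoid reproving the tensor-product slope inequality. (2) Deduce the graded-piece inclusion. (3) Invoke the Newton--Okounkov inclusion recalled in the text. (4) Conclude by taking suprema. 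If a clean reference for the spectral-norm sub-multiplicativity is not at hand, the fallback is to argue directly at the level of filtered pieces: $\mathcal F^{a}(V_n(L_1),\xi_{n\varphi_1})\cdot\mathcal F^{b}(V_n(L_2),\xi_{n\varphi_2})\subseteq\mathcal F^{a+b}(V_n(L_1\otimes L_2),\xi_{n(\varphi_1+\varphi_2)})$, which reduces to the tensor-product inequality for minimal slopes of adelic vector bundles; one then translates back through the correspondence between the $\mathbb R$-filtration and the trivially-valued norm. This is the only place where real work is needed; everything else is bookkeeping with the definitions introduced in this subsection.
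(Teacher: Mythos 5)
Your overall reduction --- passing from the concave-transform inequality to the graded-piece inclusion $V_n^{\varphi_1,t_1}(L_1)\cdot V_n^{\varphi_2,t_2}(L_2)\subseteq V_n^{\varphi_1+\varphi_2,t_1+t_2}(L_1\otimes L_2)$ and then invoking the Newton--Okounkov body inclusion --- matches the paper's proof exactly, and your observation that one must work with the spectral norm $\|\ndot\|_{n\varphi,\operatorname{sp}}$ rather than $\|\ndot\|_{n\varphi}$ correctly identifies the crux of the argument.

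There is, however, one imprecision that would become a genuine gap if the write-up were carried out literally. The sub-multiplicativity $\|(s_1 s_2)^N\|_{nN(\varphi_1+\varphi_2)}\leqslant\|s_1^N\|_{nN\varphi_1}\cdot\|s_2^N\|_{nN\varphi_2}$ that you describe as ``elementary'' is neither elementary nor, in fact, true. These are the trivially-valued ultrametric norms encoding the Harder--Narasimhan $\mathbb R$-filtrations of the adelic vector bundles $(V_{nN}(L_i),\xi_{nN\varphi_i})$, not sup-norms at individual places, so the phrase ``at each $\omega$'' is a confusion. The relevant input is the tensor-product minimal-slope inequality for adelic vector bundles, and over an adelic curve with Archimedean places this inequality carries a logarithmic defect. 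Concretely, the paper invokes \cite[Corollary 5.6.2]{CMArakelovAdelic} to obtain
\[
-\ln\|(s_1s_2)^N\|_{nN(\varphi_1+\varphi_2)}\geqslant nN(t_1+t_2-2\varepsilon)-\frac{3}{2}\nu(\Omega_\infty)\bigl(\ln\dim_K V_{nN}(L_1)+\ln\dim_K V_{nN}(L_2)\bigr),
\]
and the error term vanishes only after dividing by $N$ and letting $N\to\infty$; this is precisely why one must pass through arbitrary powers and use the spectral norm. Consequently your proposed fallback --- arguing directly at the level of filtered pieces via $\mathcal F^{a}\cdot\mathcal F^{b}\subseteq\mathcal F^{a+b}$ --- would fail: that inclusion does not hold as stated at any finite level, and no translation between the $\mathbb R$-filtration and the trivially-valued norm can repair this, since the underlying minimal-slope inequality is not exact. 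Once you replace the ``elementary'' sub-multiplicativity by the quantitative slope inequality with error term and then take the limit $N\to\infty$, you recover the paper's argument verbatim.
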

\begin{proof}
Let $(t_1,t_2)\in\mathbb R^2$, $n\in\mathbb N_{\geqslant 1}$ and $(s_1,s_2)\in V_{n}^{\varphi_1,t_1}(L_1)\times V_{n}^{\varphi_2,t_2}(L_2)$. By definition, for any $\varepsilon>0$ there exists $N\in\mathbb N_{\geqslant 1}$ such that 
\[\forall\,i\in\{1,2\},\quad s_i^N\in\mathcal F^{nN(t_i-\varepsilon)}(V_{nN}(L_i),\xi_{nN\varphi_i}).\]
By \cite[Corollary 5.6.2]{CMArakelovAdelic} (see also \cite[Remark C.3]{CMHS}), we obtain that 
\[\begin{split}-\ln\|(s_1s_2)^N&\|_{nN(\varphi_1+\varphi_2)}\geqslant nN(t_1+t_2-2\varepsilon)\\
&-\frac{3}{2}\nu(\Omega_\infty)\Big(\ln\dim_K(V_{nN}(L_1))+\ln\dim_K(V_{nN}(L_2))\Big).
\end{split}\]
Dividing the two sides of the equality by $N$ and taking the limit when $N\rightarrow+\infty$, we obtain 
\[-\ln\|s_1s_2\|_{n(\varphi_1+\varphi_2),\operatorname{sp}}\geqslant n(t_1+t_2-2\varepsilon).\] 
Since $\varepsilon>0$ is arbitrary, we obtain
\[-\|s_1s_2\|_{n(\varphi_1+\varphi_2),\operatorname{sp}}\geqslant n(t_1+t_2).\]
Therefore, one has 
\[V_n^{\varphi_1,t_1}(L_1)\cdot V_n^{\varphi_2,t_2}(L_2)\subseteq V_n^{\varphi_1+\varphi_2,t_1+t_2}(L_1\otimes L_2),\]
which implies
\[\Delta(V_\sbullet^{\varphi_1,t_1}(L_1))+\Delta(V_\sbullet^{\varphi_2,t_2}(L_2))\subseteq \Delta(V_\sbullet^{\varphi_1+\varphi_2,t_1+t_2}(L_1\otimes L_2)).\]
Let $(x,y)$ be an element of $\Delta(V_\sbullet(L_1))\times\Delta(V_{\sbullet}(L_2))$. For any $\varepsilon>0$ and any $(t_1,t_2)\in\mathbb R^2$ such that $t_1\leqslant G_{\varphi_1}(x)-\varepsilon$ and $t_2\leqslant G_{\varphi_2}(y)-\varepsilon$, one has 
\[(x,y)\in\Delta(V_\sbullet^{\varphi_1,t_1}(L_1))\times\Delta(V_\sbullet^{\varphi_2,t_2}(L_2))\]
and hence $x+y\in \Delta(V_\sbullet^{\varphi_1+\varphi_2,t_1+t_2}(L_1\otimes L_2))$. We thus obtain
\[G_{\varphi_1+\varphi_2}(x+y)\geqslant t_1+t_2-2\varepsilon.\]
Since $t_1$, $t_2$ and $\varepsilon$ are arbitrary, we deduce
\[G_{\varphi_1+\varphi_2}(x+y)\geqslant G_{\varphi_1}(x)+G_{\varphi_2}(x).\]
\end{proof}

\begin{coro}\label{Cor: concavity of vol chi}
Let $L$ be a slope-bounded invertible $\mathcal O_X$-module, $\varphi_1$ and $\varphi_2$ be elements of $\mathscr M(L)$, and $\delta\in[0,1]$. Then the following inequality holds
\begin{equation}\label{Equ: concavity of vol chi}\widehat{\mathrm{vol}}_{\chi}(L,\delta\varphi_1+(1-\delta)\varphi_2)\geqslant \delta\operatorname{\widehat{\mathrm{vol}}}_{\chi}(L,\varphi_1)+(1-\delta)\operatorname{\widehat{\mathrm{vol}}_{\chi}}(L,\varphi_2).\end{equation}
In other words, the function from $\mathscr M(L)$ to $\mathbb R$ sending $\varphi$ to $\widehat{\mathrm{vol}}_{\chi}(L,\varphi)$ is concave.
\end{coro}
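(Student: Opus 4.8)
The plan is to reduce the inequality to a pointwise concavity statement for the concave transform and then integrate, using the identity \eqref{Equ: integral G is equal to vol chi}. Since $L$ is slope-bounded, the remark following the definition of slope-boundedness shows that $\liminf_{n\to\infty}\widehat{\mu}_{\min}(V_n(L),\xi_{n\varphi})/n>-\infty$ for \emph{every} $\varphi\in\mathscr{M}(L)$; hence \eqref{Equ: integral G is equal to vol chi} applies to all three metric families $\varphi_1$, $\varphi_2$ and $\psi:=\delta\varphi_1+(1-\delta)\varphi_2$, the last of which lies in $\mathscr{M}(L)$ because $\mathscr{M}(L)$ is an affine space over the vector space $\mathscr{C}^0_{\mathrm{a}}(X)$, and moreover each $G_{\varphi}$ is bounded on the finite-measure set $\Delta(L)$. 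If $L$ is not big the statement is trivial (both sides vanish, by dominancy of the metrics together with slope-boundedness), so we may assume $L$ big. It then suffices to prove, for every $\delta\in[0,1]$ and every $x\in\Delta(L)$,
\[ G_{\delta\varphi_1+(1-\delta)\varphi_2}(x)\;\geqslant\;\delta\,G_{\varphi_1}(x)+(1-\delta)\,G_{\varphi_2}(x), \]
since integrating this over $\Delta(L)$ and applying \eqref{Equ: integral G is equal to vol chi} to $\psi,\varphi_1,\varphi_2$ gives exactly \eqref{Equ: concavity of vol chi}.

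I would establish the pointwise inequality first for rational $\delta=p/q$ with $0\leqslant p\leqslant q$. Here $q\psi=p\varphi_1+(q-p)\varphi_2$ is a metric family on $L^{\otimes q}$, and by iterating Proposition \ref{Pro: super additive G} — applied repeatedly with $L_1$ a tensor power of $L$ and $L_2=L$ to build the metric families $p\varphi_1\in\mathscr{M}(L^{\otimes p})$ and $(q-p)\varphi_2\in\mathscr{M}(L^{\otimes(q-p)})$, and then once more with $L_1=L^{\otimes p}$, $L_2=L^{\otimes(q-p)}$ — evaluated at the points $px\in p\Delta(L)=\Delta(L^{\otimes p})$ and $(q-p)x\in(q-p)\Delta(L)=\Delta(L^{\otimes(q-p)})$, one obtains
\[ G_{q\psi}(qx)\;\geqslant\;G_{p\varphi_1}(px)+G_{(q-p)\varphi_2}((q-p)x)\;\geqslant\;p\,G_{\varphi_1}(x)+(q-p)\,G_{\varphi_2}(x), \]
using that $px+(q-p)x=qx\in q\Delta(L)=\Delta(L^{\otimes q})$. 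The homogeneity relation \eqref{Equ: concave transform homogeneity} gives $G_{q\psi}(qx)=q\,G_{\psi}(x)$; dividing by $q$ yields the desired inequality for $\delta=p/q$. To pass to an arbitrary $\delta\in[0,1]$, I would note that the case $q=2$ of the above, applied to all pairs of metric families of the form $\delta'\varphi_1+(1-\delta')\varphi_2$, shows that for fixed $x$ the function $\delta\mapsto G_{\delta\varphi_1+(1-\delta)\varphi_2}(x)$ is midpoint-concave on $[0,1]$; it is also bounded above there, uniformly in $\delta$, because $\widehat{\mu}_{\max}(V_n(L),\xi_{n\psi})/n$ is controlled by $\widehat{\mu}_{\max}(V_n(L),\xi_{n\varphi_1})/n$ plus the integral of a dominating function for $\varphi_2-\varphi_1$ (the same slope-comparison estimate as in the slope-boundedness remark, for maximal slopes). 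A midpoint-concave function on an interval that is bounded above is concave, so $\delta\mapsto G_{\delta\varphi_1+(1-\delta)\varphi_2}(x)$ is concave, and comparison with its values at $\delta=0,1$ gives the pointwise inequality for every $\delta$. (Alternatively, one may invoke the continuity of $\widehat{\mathrm{vol}}_{\chi}(L,\cdot)$ under uniform perturbation of the metric family and pass to the limit along rationals $\delta_k\to\delta$ in the already-proved rational case.)

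The only step requiring genuine care is the bookkeeping in the iterated application of Proposition \ref{Pro: super additive G}: realizing $q$ copies of the segment as a tensor construction on $L^{\otimes q}$, tracking that the relevant point is $px+(q-p)x=qx$ in $\Delta(L^{\otimes q})=q\Delta(L)$, and matching this with \eqref{Equ: concave transform homogeneity}; together with, for the final passage to irrational $\delta$, producing the uniform upper bound on $G_{\delta\varphi_1+(1-\delta)\varphi_2}$ from the slope-boundedness hypothesis. The substantive content — superadditivity of the concave transform — is already contained in Proposition \ref{Pro: super additive G}, so the corollary is a fairly soft consequence of it and of \eqref{Equ: integral G is equal to vol chi}.
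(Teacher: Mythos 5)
Your proposal is correct and follows essentially the same route as the paper: for rational $\delta$, the inequality falls out of the superadditivity of the concave transform (Proposition~\ref{Pro: super additive G}) combined with the homogeneity relation \eqref{Equ: concave transform homogeneity} and the integral formula \eqref{Equ: integral G is equal to vol chi}, and the general case follows by density of rationals and the Lipschitz continuity of $\widehat{\mathrm{vol}}_\chi$ in the metric family -- which is your ``alternative'' route and the one the paper actually uses. Two side remarks: your explicit handling of the non-big case is a correct (if trivial) point the paper leaves implicit; and the ``iterated'' application of Proposition~\ref{Pro: super additive G} to obtain $G_{p\varphi_1}(px)\geqslant p\,G_{\varphi_1}(x)$ is superfluous, since \eqref{Equ: concave transform homogeneity} gives that as an equality, while your primary midpoint-concavity argument for irrational $\delta$ works but needs the uniform upper bound you supply via the slope-comparison estimate.
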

\begin{proof}
We first treat the case where $\delta$ is a rational number. Let $k$ and $N$ be positive integers such that $N>k$. By 
\eqref{Equ: concave transform homogeneity}, \eqref{Equ: integral G is equal to vol chi} and Proposition \ref{Pro: super additive G}, we obtain 
\[\frac{\widehat{\mathrm{vol}}_{\chi}(L^{\otimes N},k\varphi_1+(N-k)\varphi_2)}{\operatorname{vol}(L^{\otimes N})}\geqslant\frac{\widehat{\mathrm{vol}}_{\chi}(L^{\otimes k},k\varphi_1)}{\operatorname{vol}(L^{\otimes k})}+\frac{\widehat{\mathrm{vol}}_{\chi}(L^{\otimes(N-k)},(N-k)\varphi_2)}{\operatorname{vol}(L^{\otimes (N-k)})},\]
or equivalently,
\[N\operatorname{\widehat{\mathrm{vol}}_{\chi}}(L,\textstyle{\frac{k}{N}\varphi_1+\frac{N-k}{N}\varphi_2})\geqslant k\operatorname{\widehat{\mathrm{vol}}_{\chi}}(L,\varphi_1)+(N-k)\operatorname{\widehat{\mathrm{vol}}_{\chi}}(L,\varphi_2).\]
Therefore the inequality \eqref{Equ: concavity of vol chi} holds in the case where $\delta$ is rational. The general case follows from the rational case together with the following estimate
\[\forall\,(\varphi,\psi)\in\mathscr M(L)^2,\quad \Big|\widehat{\mathrm{vol}}(L,\varphi)-\widehat{\mathrm{vol}}(L,\psi)\Big|\leqslant (d+1)\operatorname{vol}(L)\int_{\Omega}\sup_{x\in X_\omega^{\mathrm{an}}}|(\varphi-\psi)(x)|\,\nu(\mathrm{d}\omega).\]  
\end{proof}

\subsection{G\^ateaux differentiability}

\begin{defi}
Let $L$ be a big invertible $\mathcal O_X$-module. Let $\varphi$ and $\psi$ be two elements of $\mathscr M(L)$. For any $\omega\in\Omega$, we denote by $\operatorname{vol}(L_\omega,\varphi_\omega,\psi_\omega)$ the relative volume of $L_\omega$ with respect to the metric pair $(\varphi_\omega,\psi_\omega)$, which is defined as 
\[-\lim_{n\rightarrow+\infty}\frac{(d+1)!}{n^{d+1}}\ln\frac{\|\ndot\|_{n\varphi_\omega,\det}}{\|\ndot\|_{n\psi_\omega,\det}}.\]
We refer to \cite[Theorem 4.5]{MR3312451} for the convergence of the sequence definition the relative volume.
\end{defi}

\begin{prop}\label{Pro: local volume}
Let $L$ be a semi-ample and big  invertible $\mathcal O_X$-module and $(\varphi,\psi)\in\mathscr M(L)^2$. The following equality holds:
\[\widehat{\mathrm{vol}}_\chi(L,\varphi)-\widehat{\mathrm{vol}}_\chi(L,\psi)=\int_{\Omega}\operatorname{vol}(L_\omega,\varphi_\omega,\psi_\omega)\,\nu(\mathrm{d}\omega).\]
\end{prop}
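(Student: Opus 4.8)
The plan is to reduce the global identity to the definition of the $\chi$-volume as an arithmetic degree, together with the arithmetic Hilbert–Samuel–type expansion of $\widehat{\deg}$ in terms of the determinant norms at each place, and then pass to the limit under the integral sign. More precisely, for each $n\in\mathbb N$, write $E_n=V_n(L)=H^0(X,L^{\otimes n})$, which is a finite-dimensional $K$-vector space, and recall that
\[
\widehat{\mathrm{vol}}_\chi(L,\varphi)=\lim_{n\to+\infty}\frac{\widehat{\deg}(E_n,\xi_{n\varphi})}{n^{d+1}/(d+1)!}.
\]
By the very definition of the arithmetic degree of an adelic vector bundle (see \cite{CMArakelovAdelic}), one has
\[
\widehat{\deg}(E_n,\xi_{n\varphi})-\widehat{\deg}(E_n,\xi_{n\psi})
=-\int_{\Omega}\ln\frac{\|\ndot\|_{n\varphi_\omega,\det}}{\|\ndot\|_{n\psi_\omega,\det}}\,\nu(\mathrm d\omega),
\]
since the two adelic vector bundles share the same underlying $K$-space $E_n$ and the $K$-structure contributions cancel. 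Dividing by $n^{d+1}/(d+1)!$ and comparing with the definition of $\operatorname{vol}(L_\omega,\varphi_\omega,\psi_\omega)$, the fiberwise integrand converges pointwise in $\omega$ to $\operatorname{vol}(L_\omega,\varphi_\omega,\psi_\omega)$ as $n\to+\infty$.

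The substance of the argument is therefore to justify the exchange of limit and integral, i.e.
\[
\lim_{n\to+\infty}\int_{\Omega}\Bigl(-\frac{(d+1)!}{n^{d+1}}\ln\frac{\|\ndot\|_{n\varphi_\omega,\det}}{\|\ndot\|_{n\psi_\omega,\det}}\Bigr)\nu(\mathrm d\omega)
=\int_{\Omega}\operatorname{vol}(L_\omega,\varphi_\omega,\psi_\omega)\,\nu(\mathrm d\omega).
\]
For this I would invoke dominated convergence. The key uniform bound is that for a basis-free comparison of determinant norms one has, at every place $\omega$ and every $n$,
\[
\Bigl|\frac{1}{n}\ln\frac{\|\ndot\|_{n\varphi_\omega,\det}}{\|\ndot\|_{n\psi_\omega,\det}}\Bigr|
\leqslant \dim_K(E_n)\cdot\sup_{x\in X_\omega^{\mathrm{an}}}|\varphi_\omega-\psi_\omega|(x),
\]
because scaling each metric on $L^{\otimes n}$ by $n\sup|\varphi_\omega-\psi_\omega|$ scales the determinant norm on the one-dimensional $\det E_n$ by at most $n\dim_K(E_n)\sup|\varphi_\omega-\psi_\omega|$ in the logarithm; dividing by $n^{d+1}/(d+1)!$ and using $\dim_K(E_n)=O(n^d)$ gives a bound by a constant multiple of $\sup_{x}|\varphi_\omega-\psi_\omega|(x)$, uniformly in $n$. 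Since $\varphi$ and $\psi$ are both metric families defining adelic line bundles, $\varphi-\psi\in\mathscr C^0_{\mathrm d}(X)$ by Proposition \ref{prop:dominated:criterion}, so $\omega\mapsto\sup_{x\in X_\omega^{\mathrm{an}}}|\varphi_\omega-\psi_\omega|(x)$ is dominated by a $\nu$-integrable function. This supplies the integrable majorant, and dominated convergence yields the claimed equality. Measurability of the integrands in $\omega$ follows from the measurability conditions in the definition of adelic line bundle, so the integrals make sense.

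The main obstacle I anticipate is not the convergence of the fiberwise sequences (which is cited from \cite{MR3312451}) nor the uniform bound above (which is elementary linear algebra on one-dimensional determinants), but rather making sure that the identity $\widehat{\deg}(E_n,\xi_{n\varphi})-\widehat{\deg}(E_n,\xi_{n\psi})=-\int_\Omega\ln(\|\ndot\|_{n\varphi_\omega,\det}/\|\ndot\|_{n\psi_\omega,\det})\,\nu(\mathrm d\omega)$ is applied with the correct normalization of $\widehat{\deg}$ and that the passage between $\det$-norms on $E_n$ and the arithmetic degree is set up so that the limit really reproduces $\widehat{\mathrm{vol}}_\chi$ — in particular, that the semi-ampleness and bigness hypothesis on $L$ is what guarantees the existence of the limits defining $\widehat{\mathrm{vol}}_\chi(L,\varphi)$ and $\widehat{\mathrm{vol}}_\chi(L,\psi)$ (via slope-boundedness, Remark \ref{rema:semiample:of:finite:type}), so that both sides are finite and the subtraction is legitimate. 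Once these bookkeeping points are in place, the proof is a one-line application of dominated convergence.
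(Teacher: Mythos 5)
Your proposal is correct and takes essentially the same route as the paper: both write the difference of $\chi$-volumes as the limit of $-\frac{(d+1)!}{n^{d+1}}\int_\Omega\ln\bigl(\|\ndot\|_{n\varphi_\omega,\det}/\|\ndot\|_{n\psi_\omega,\det}\bigr)\,\nu(\mathrm d\omega)$, use the elementary determinant-norm bound $\frac{1}{n\dim_K(E_n)}\bigl|\ln\frac{\|\ndot\|_{n\varphi_\omega,\det}}{\|\ndot\|_{n\psi_\omega,\det}}\bigr|\leqslant\sup_{x\in X_\omega^{\mathrm{an}}}|\varphi_\omega-\psi_\omega|(x)$, and conclude by dominated convergence, the dominating function coming from the dominancy of the metric families.
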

\begin{proof}
For any positive integer $n$, let $\alpha_n$ be a non-zero element of $\det H^0(X,L^{\otimes n})$. By definition
\[\widehat{\mathrm{vol}}_\chi(L,\varphi)-\widehat{\mathrm{vol}}_\chi(L,\psi)=-\lim_{n\rightarrow+\infty}\frac{(d+1)!}{n^{d+1}}\int_{\omega\in\Omega}\ln\frac{\|\alpha_n\|_{n\varphi_\omega,\det}}{\|\alpha_n\|_{n\psi_\omega,\det}}\,\nu(\mathrm{d}\omega).\]
Note that 
\[\frac{1}{n\dim_K(H^0(X,L^{\otimes n}))}\bigg|\ln\frac{\|\ndot\|_{n\varphi_\omega,\det}}{\|\ndot\|_{n\psi_\omega,\det}}\bigg|\leqslant\sup_{x\in X_\omega^{\mathrm{an}}}|\varphi_\omega-\psi_\omega|(x).\]
By dominated convergence theorem we obtain
\[\begin{split}\widehat{\mathrm{vol}}_\chi(L,\varphi)-\widehat{\mathrm{vol}}_\chi(L,\psi)&=-\int_{\omega\in\Omega}\lim_{n\rightarrow+\infty}\frac{(d+1)!}{n^{d+1}}\ln\frac{\|\ndot\|_{n\varphi_\omega,\det}}{\|\ndot\|_{n\psi_\omega,\det}}\,\nu(\mathrm{d}\omega)\\
&=\int_{\Omega}\operatorname{vol}(L_\omega,\varphi_\omega,\psi_\omega)\,\nu(\mathrm{d}\omega).
\end{split}\]
\end{proof}

\begin{prop}\label{Pro: differentiability of vol chi} Let $\overline L=(L,\varphi)$ be an adelic line bundle on $X$. We assume that $L$ is semi-ample and big and that $\varphi$ is semi-positive.
The function $\widehat{\mathrm{vol}}_\chi(\ndot)$ on $\widehat{\mathrm{Pic}}_A(X)$ is G\^ateaux differentiable at $\overline L$ along the directions of $\mathscr M(\mathcal O_X)$. Moreover, for any $f\in\mathscr M(\mathcal O_X)$, the function 
\[(\omega\in\Omega)\longmapsto\int_{X_\omega^{\mathrm{an}}}f_\omega \,c_1(L_\omega,\varphi_\omega)^d\]
is $\nu$-integrable, and 
one has 
\[\lim_{t\rightarrow 0}\frac{\widehat{\mathrm{vol}}_\chi(\overline L(tf))-\widehat{\mathrm{vol}}_\chi(\overline L)}{t}=(d+1)\int_{\Omega}\nu(\mathrm{d}\omega)\int_{X_\omega^{\mathrm{an}}}f_\omega\, c_1(L_\omega,\varphi_\omega)^d.\] 
\end{prop}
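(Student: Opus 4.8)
The plan is to reduce the global Gâteaux differentiability statement to a fiberwise (local) differentiability statement together with an integration in the parameter $\omega$, using Proposition~\ref{Pro: local volume} as the bridge between the $\chi$-volume and an integral of relative volumes. First I would fix $f\in\mathscr M(\mathcal O_X)$ and, for $t\in\mathbb R$ in a neighbourhood of $0$ (say $|t|\leqslant 1$), write $\overline L(tf)=(L,\varphi+tf)$. Applying Proposition~\ref{Pro: local volume} with $\psi=\varphi$ and the metric family $\varphi+tf$, one gets
\[\widehat{\mathrm{vol}}_\chi(\overline L(tf))-\widehat{\mathrm{vol}}_\chi(\overline L)=\int_\Omega\operatorname{vol}(L_\omega,(\varphi+tf)_\omega,\varphi_\omega)\,\nu(\mathrm d\omega),\]
so it suffices to differentiate under the integral sign. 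The local differentiability of the relative volume of a semi-positively metrized semi-ample and big line bundle is classical in the Archimedean case (Bedford--Taylor theory / the work of Berman--Boucksom) and has a non-Archimedean analogue; the key fiberwise input is
\[\lim_{t\to 0}\frac{1}{t}\operatorname{vol}(L_\omega,(\varphi+tf)_\omega,\varphi_\omega)=(d+1)\int_{X_\omega^{\mathrm{an}}}f_\omega\,c_1(L_\omega,\varphi_\omega)^d,\]
which I would cite from the references on the $\chi$-volume (e.g. \cite{MR3312451}) rather than reprove.

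The second step is to justify the interchange of limit and integral by dominated convergence. For this I need two things: a uniform (in $\omega$ and in small $t\neq 0$) integrable bound on $\frac1t\operatorname{vol}(L_\omega,(\varphi+tf)_\omega,\varphi_\omega)$, and measurability in $\omega$ of both the difference quotients and the limit. The bound comes from concavity: by Corollary~\ref{Cor: concavity of vol chi} the function $t\mapsto\widehat{\mathrm{vol}}_\chi(L,\varphi+tf)$ is concave, and the same argument applied fiberwise (or directly the inequality $|\operatorname{vol}(L_\omega,\varphi_\omega,\psi_\omega)|\leqslant(d+1)\operatorname{vol}(L_\omega)\sup_{X_\omega^{\mathrm{an}}}|\varphi_\omega-\psi_\omega|$, already used at the end of the proof of that corollary) gives
\[\Big|\tfrac1t\operatorname{vol}(L_\omega,(\varphi+tf)_\omega,\varphi_\omega)\Big|\leqslant(d+1)\operatorname{vol}(L)\sup_{x\in X_\omega^{\mathrm{an}}}|f_\omega|(x),\]
and the right-hand side is $\nu$-integrable because $f$ is dominated. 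Concavity also lets me replace the two-sided limit $t\to0$ by monotone limits over $t\downarrow0$ and $t\uparrow0$, which simplifies the measurability discussion since a monotone limit of measurable functions is measurable.

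The measurability of $\omega\mapsto\operatorname{vol}(L_\omega,(\varphi+tf)_\omega,\varphi_\omega)$ for fixed $t$ follows from the fact that this relative volume is a limit (in $n$) of integrals of logarithms of ratios of determinant norms, each of which is $\mathcal A$-measurable because $(L,\varphi)$ and $(L,\varphi+tf)$ are adelic line bundles (measurability is part of the definition, and it passes to the $n$-th tensor powers and their determinants); this is exactly the kind of statement recorded in Proposition~\ref{Pro: local volume}'s proof. Consequently $\omega\mapsto(d+1)\int_{X_\omega^{\mathrm{an}}}f_\omega\,c_1(L_\omega,\varphi_\omega)^d$, being the (monotone, hence pointwise) limit of such functions, is $\mathcal A$-measurable, and by the domination bound it is $\nu$-integrable; this proves the auxiliary integrability assertion in the statement. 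Putting these together, dominated convergence yields
\[\lim_{t\to 0}\frac{\widehat{\mathrm{vol}}_\chi(\overline L(tf))-\widehat{\mathrm{vol}}_\chi(\overline L)}{t}=\int_\Omega\Big((d+1)\int_{X_\omega^{\mathrm{an}}}f_\omega\,c_1(L_\omega,\varphi_\omega)^d\Big)\nu(\mathrm d\omega),\]
which is the desired formula.

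I expect the main obstacle to be the \emph{uniform} fiberwise differentiability needed for dominated convergence: one must know not just that each $\omega$-fiber is differentiable but that the difference quotients converge in an $\nu$-dominated fashion. The cleanest route is the concavity trick — for a concave function $\phi$ with $\phi(0)$ finite, the quotients $(\phi(t)-\phi(0))/t$ are monotone in $t$ on each side of $0$ and bounded by the one-sided derivatives, so dominated convergence reduces to the single integrable majorant above. The other delicate point, more bookkeeping than substance, is confirming that the pointwise fiberwise derivative really equals $(d+1)\int f_\omega c_1(L_\omega,\varphi_\omega)^d$ uniformly in the nature (Archimedean vs. non-Archimedean) of $|\ndot|_\omega$; here one invokes the local theory of Monge--Ampère measures and relative volumes cited earlier, noting that semi-ampleness and bigness of $L$ together with semi-positivity of $\varphi$ are precisely the hypotheses under which that local differentiability holds.
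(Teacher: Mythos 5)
Your argument follows the paper's proof essentially step for step: use Proposition~\ref{Pro: local volume} to write the difference of $\chi$-volumes as an integral of relative volumes over $\Omega$, cite the local differentiability of the relative volume (Archimedean and non-Archimedean), bound the difference quotients by $(d+1)\deg_L(X)\sup_{X_\omega^{\mathrm{an}}}|f_\omega|$, and apply dominated convergence. The extra discussion of concavity and the explicit measurability verification are fine but not needed beyond what the paper already records implicitly; in particular $\operatorname{vol}(L)=\deg_L(X)$ here, so your majorant coincides with the paper's.
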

\begin{proof}
By \cite[Theorem 1.2]{BGF2020b} and \cite[Theorem B]{MR2657428}, for any $\omega\in\Omega$, one has 
\[\lim_{t\rightarrow 0}\frac{\operatorname{vol}(L_\omega,\varphi_\omega+tf_\omega,\varphi_\omega)}{t}=(d+1)\int_{X_\omega^{\mathrm{an}}}f_\omega\,c_1(L_\omega,\varphi_\omega)^d.
\] 
Note that 
\[\Big|\frac{\operatorname{vol}(L_\omega,\varphi_\omega+tf_\omega,\varphi_\omega)}{t}\Big|\leqslant (d+1)\deg_{L}(X)\sup_{x\in X_\omega^{\mathrm{an}}}|f_\omega|(x).\]
Since the function
\[(\omega\in\Omega)\longmapsto\sup_{x\in X_\omega^{\mathrm{an}}}|f_\omega|(x) \]
is integrable, by Lebesgue's dominated convergence theorem we obtain, by using Proposition \ref{Pro: local volume}, that  
\[\lim_{t\rightarrow 0}\frac{\widehat{\mathrm{vol}}_\chi(\overline L(tf))-\widehat{\mathrm{vol}}_\chi(\overline L)}{t}=(d+1)\int_{\Omega}\nu(\mathrm{d}\omega)\int_{X_\omega^{\mathrm{an}}}f_\omega\,c_1(L_\omega,\varphi_\omega)^d.\]
\end{proof}

\begin{rema}
We conjecture that any big invertible $\mathcal O_X$-module $L$ is slope-bounded. If this is true, then for any metric family $\varphi\in\mathscr M(L)$, the $\chi$-volume $\widehat{\operatorname{vol}}(L,\varphi)$ takes real values. Hence the results of Propositions \ref{Pro: local volume} and \ref{Pro: differentiability of vol chi} hold without semi-amplitude assumption on $L$. Correspondingly, we conjecture that Theorem \ref{theorem:equidistribution} also holds when $L$ is only nef and big.
\end{rema}

\begin{coro}\label{Cor: several nef case}
Let $( M_1,\psi_1),\ldots,( M_d,\psi_d)$ be relatively nef adelic line bundles. For any $f\in\mathscr{C}^0_{\mathrm{a}}(X)$, the function
\begin{equation}\label{Equ: nu integrable multi line bundle}(\omega\in\Omega)\longmapsto\int_{X_\omega^{\mathrm{an}}}f_\omega\,c_1(M_{1,\omega},\psi_{1,\omega})\cdots c_1(M_{d,\omega},\psi_{d,\omega})\end{equation}
is $\nu$-integrable.
\end{coro}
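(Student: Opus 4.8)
The plan is to reduce the multilinear statement to the single line bundle case treated in Proposition \ref{Pro: differentiability of vol chi} by a standard polarization argument. First I would observe that it suffices to treat the case where $f \geqslant 0$, since any $f \in \mathscr{C}^0_{\mathrm{a}}(X)$ can be written as a difference of two nonnegative elements of $\mathscr{C}^0_{\mathrm{a}}(X)$ (for instance $f = (f+C) - C$ with $C$ a large enough integrable dominating function, or a genuine positive/negative part decomposition) and the fiber integral \eqref{Equ: nu integrable multi line bundle} is linear in $f$. Next, by multilinearity of the local Monge–Amp\`ere measures $c_1(M_{1,\omega},\psi_{1,\omega})\cdots c_1(M_{d,\omega},\psi_{d,\omega})$ in each slot, it is enough to prove the integrability when $(M_1,\psi_1) = \cdots = (M_d,\psi_d) = (M,\psi)$ for a single relatively nef adelic line bundle: indeed, writing each $(M_i,\psi_i)$ additively and expanding $c_1\big(\bigotimes_i (M_i,\psi_i)\big)^d$ by the multinomial formula expresses each mixed term as an integer-coefficient combination of ``pure power'' terms $c_1(M_{J,\omega},\psi_{J,\omega})^d$ for finitely many tensor products $(M_J,\psi_J) = \bigotimes_{i\in J}(M_i,\psi_i)$, each of which is again relatively nef; so measurability/integrability of \eqref{Equ: nu integrable multi line bundle} for pure powers yields it in general.

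So I am reduced to showing: for a relatively nef adelic line bundle $(M,\psi)$ and $f \in \mathscr{C}^0_{\mathrm{a}}(X)$ with $f \geqslant 0$, the function $(\omega\in\Omega)\mapsto \int_{X_\omega^{\mathrm{an}}} f_\omega\, c_1(M_\omega,\psi_\omega)^d$ is $\nu$-integrable. The key step is to perturb so that Proposition \ref{Pro: differentiability of vol chi} applies: fix an auxiliary semi-ample and big invertible $\mathcal O_X$-module with a semi-positive metric family, say take $(L_0,\varphi_0)$ arising from an ample line bundle on $X$ equipped with a family of Fubini–Study-type semi-positive metrics (such a family exists and lies in $\widehat{\mathrm{Pic}}_A(X)$). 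For every $\varepsilon \in \mathbb{Q}_{>0}$, the adelic line bundle $(L_0,\varphi_0) + \varepsilon(M,\psi)$ has semi-ample and big underlying line bundle and semi-positive metric family, so Proposition \ref{Pro: differentiability of vol chi} gives that $\omega \mapsto \int_{X_\omega^{\mathrm{an}}} f_\omega\, c_1(L_{0,\omega}+\varepsilon M_\omega, \varphi_{0,\omega}+\varepsilon\psi_\omega)^d$ is $\nu$-integrable. Expanding this by multilinearity and induction on $\varepsilon$-powers, one extracts $\int_{X_\omega^{\mathrm{an}}} f_\omega\, c_1(M_\omega,\psi_\omega)^d$ as a limit (as $\varepsilon \to 0$, or by finite differences in $\varepsilon$ over $d+1$ rational values) of $\nu$-integrable functions; since pointwise limits of measurable functions are measurable, measurability of the target follows. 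Integrability then follows from the uniform bound $\int_{X_\omega^{\mathrm{an}}} f_\omega\, c_1(M_\omega,\psi_\omega)^d \leqslant \deg_M(X)\cdot \sup_{x\in X_\omega^{\mathrm{an}}}|f_\omega|(x)$ (using $f_\omega \geqslant 0$ and that the total mass of $c_1(M_\omega,\psi_\omega)^d$ is $\deg_M(X)$, independent of $\omega$) together with the hypothesis that $\omega \mapsto \sup_x |f_\omega|(x)$ is $\nu$-integrable, since $f \in \mathscr{C}^0_{\mathrm{a}}(X)$.

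The main obstacle is the reduction to the semi-ample and big case: Proposition \ref{Pro: differentiability of vol chi} is only available when the underlying line bundle is semi-ample \emph{and} big, whereas a relatively nef $M$ need be neither. The perturbation $(L_0,\varphi_0)+\varepsilon(M,\psi)$ fixes this — semi-ample plus (anything relatively nef) times $\varepsilon$ stays semi-ample and big, and semi-positive plus semi-positive is semi-positive — but one must be careful that the extraction of the pure $c_1(M_\omega,\psi_\omega)^d$ term is legitimate at the level of fiber integrals: this is where finite differencing over $d+1$ distinct rational values $\varepsilon_0 < \cdots < \varepsilon_d$ and inverting the resulting Vandermonde system, or simply taking the $d$-th derivative in $\varepsilon$ at $0$, cleanly isolates the coefficient while preserving $\nu$-measurability at every stage. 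A mild additional point is the existence of the auxiliary semi-positive metric family $\varphi_0$ on an ample $L_0$; this is standard and can be cited from the construction of adelic line bundles, or one may instead run the argument with $(L_0,\varphi_0)$ replaced by any fixed element of the open cone $\widehat{\mathrm{Pic}}_A(X)$, which is nonempty.
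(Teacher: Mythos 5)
Your proposal follows essentially the same route as the paper's proof: reduce by polarization to a single relatively nef $(M,\psi)$, perturb by a relatively ample $(L_0,\varphi_0)$ so that Proposition~\ref{Pro: differentiability of vol chi} applies, extract the $c_1(M_\omega,\psi_\omega)^d$ contribution by an algebraic manipulation in the perturbation parameter, and finish with the uniform bound $A_1(\omega)\leqslant f_\omega\leqslant A_2(\omega)$ coming from $f\in\mathscr C^0_{\mathrm a}(X)$. The only technical difference is the coefficient extraction: the paper normalizes $\frac{1}{n^d}\int f_\omega\,c_1(M_\omega^{\otimes n}\otimes L_\omega,\ldots)^d$ and lets $n\to\infty$, whereas you propose finite differencing or a $d$-th derivative in $\varepsilon$; both isolate the leading coefficient of a degree-$d$ polynomial. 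One small slip to flag: the parenthetical ``as $\varepsilon\to 0$'' does not work, since that limit yields $\int f_\omega\,c_1(L_{0,\omega},\varphi_{0,\omega})^d$ (the constant term), not the $\varepsilon^d$ coefficient; the finite-difference or derivative alternatives you also give are the correct mechanisms.
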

\begin{proof}
By the multi-linearity of Monge-Amp\`ere measure, we may assume without loss of generality that all adelic line bundles $(M_i,\psi_i)$ are equal to the same one $(M,\psi)$ (c.f. \cite[Proposition~1.1.4]{CMIntersection}). Let $(L,\varphi)$ be a relatively ample adelic line bundle on $X$. By Proposition \ref{Pro: differentiability of vol chi}, for any $n\in\mathbb N_{\geqslant 1}$, the function
\[(\omega\in\Omega)\longmapsto \frac{1}{n^d}\int_{X_{\omega}^{\mathrm{an}}}f_\omega\,c_1(M_\omega^{\otimes n}\otimes L_\omega,n\psi_\omega+\varphi_\omega)^d\]
is $\mathcal A$-measurable. Passing to limit when $n\rightarrow+\infty$, we obtain the $\mathcal A$-measurability of the function
\[(\omega\in\Omega)\longmapsto\int_{X_\omega^{\mathrm{an}}}f_\omega\,c_1(M_\omega,\psi_\omega)^d.\]
Finally, since $f\in\mathscr C^0_{\mathrm{a}}(X)$, by \cite[Proposition 6.1.12]{CMArakelovAdelic}, we obtain that there exist two $\nu$-integrable functions $A_1$ and $A_2$ on $\Omega$ such that
\[\forall\,x\in X_\omega^{\mathrm{an}},\quad A_1(\omega)\leqslant f_\omega(x)\leqslant A_2(\omega).\]
Since each $c_1(M_\omega,\psi_\omega)^{d}$ has measure $c_1(M)^d$, the function
\[(\omega\in\Omega)\longmapsto\int_{X_\omega^{\mathrm{an}}}f_\omega\,c_1(M_\omega,\psi_\omega)^d\] 
is $\nu$-integrable. 
\end{proof}

\subsection{Measurability of fiber integrals}

\begin{defi}\label{def:Borel:measure:family}
 Let $\Omega'$ be an element of $\mathcal A$.
As \emph{Borel measure family} on $X$ over $\Omega'$,
we refer to a family $\eta=(\eta_\omega)_{\omega\in\Omega'}$, where each $\eta_\omega$ is a Borel measure on $X_\omega$, such that, for any $f = (f_{\omega})_{\omega \in \Omega'}\in\mathscr{C}^0_{\mathrm{a}}(X; \Omega')$, the function 
\[(\omega\in\Omega')\longmapsto\int_{X_\omega^{\mathrm{an}}}f_\omega(x)\,\eta_\omega(\mathrm{d}x)\]
is $\mathcal A|_{\Omega'}$-measurable and integrable with respect to the restriction of the measure $\nu$ to $\Omega'$. We denote by 
$\eta(f)$ the integral
\[\int_{\Omega'}\int_{X_\omega^{\mathrm{an}}}f_\omega(x)\,\eta_\omega(\mathrm{d}x)\,\nu(\mathrm{d}\omega).\]
\end{defi} 

\begin{rema}\label{Rem: finitenesss of measure}
Let $\eta=(\eta_\omega)_{\omega\in\Omega}$ be a Borel measure family. For any $\nu$-integrable function $A:\Omega\rightarrow\mathbb R$ which vanishes on $\Omega\setminus\Omega'$, we consider the family $f_A=(f_{A,\omega})_{\omega\in\Omega}$, where $f_{A,\omega}$ denotes the constant function on $X_\omega^{\mathrm{an}}$ taking value $A(\omega)$. 
We then obtain that the function 
\[(\omega\in\Omega')\longmapsto A(\omega)\eta_\omega(X_\omega^{\mathrm{an}})=\int_{x\in X_\omega^{\mathrm{an}}}f_{A,\omega}(x)\,\eta_\omega(\mathrm{d}x)\]
is $\nu$-integrable. This observation shows that the function 
\[(\omega\in\Omega')\longmapsto \eta_\omega(X_\omega^{\mathrm{an}})\]
is essentially bounded, namely there exists $C>0$ such that 
\[\{\omega\in\Omega'\,:\,\eta_\omega(X_\omega^{\mathrm{an}})>C\}\]
is a zero measure set.
\end{rema}

\begin{exem}\label{Exe: delte L}
Let $\overline L=(L,\varphi)$ be a relatively ample adelic line bundle on $X$, namely $L$ is an ample invertible $\mathcal O_X$-module and $\varphi=(\varphi_\omega)_{\omega\in\Omega}$ is a measurable and dominated family of semi-positive metrics. Let $Y$ be a reduced closed subscheme. Denote by $\delta_{\overline L,Y,\Omega'}=(\delta_{\overline L,Y,\omega})_{\omega\in\Omega'}$ the Borel measure family on $X$ over $\Omega'$ defined as follows: for any $\omega\in\Omega$, and any positive Borel function $f_\omega$ on $X_\omega^{\mathrm{an}}$,
\[\int_{X_\omega^{\mathrm{an}}}f_\omega(x)\,\delta_{\overline L,Y,\omega}(\mathrm{d}x):=\frac{1}{\deg_L(Y)}\int_{Y_\omega^{\mathrm{an}}}f_\omega(y)\,c_1(L_\omega|_{Y_\omega},\varphi_\omega|_{Y_\omega})^{\dim(Y)}(\mathrm{d}y).\]
This is a Borel probability measure on $X_\omega^{\mathrm{an}}$, which is supported on $Y_\omega^{\mathrm{an}}$. In the case where $Y$ is a closed point, the measure $\delta_{\overline L,Y,\omega}$ is given by the weighted average on points of $X_\omega^{\mathrm{an}}$. 
We refer to Proposition \ref{Pro: differentiability of vol chi} for the integrability of the function 
\[(\omega\in\Omega)\longmapsto\int_{X_\omega^{\mathrm{an}}}f_\omega(x)\,\delta_{\overline L,Y,\omega}(\mathrm{d}x)\]
when $f\in\mathscr C^0_{\mathrm{a}}(X)$ (so that the restriction of $f$ to $Y$ belongs to $\mathscr C^0_{\mathrm{a}}(Y)$). 
In the case where $S$ is proper and $\Omega'=\Omega$, one
can also interpret the expression $\delta_{\overline L,Y,\Omega}$ in terms of the arithmetic intersection theory. For any $f\in \mathscr C^0_{\mathrm{a}}(X)$ and any $t\in\mathbb R$, we denote by $\overline L(tf)$ the adelic line bundle $(L,\varphi+\mathrm{e}^{-tf})$. Then the following equality holds:
\[\delta_{\overline L,Y,\Omega'}(f)=\lim_{t\rightarrow 0}\frac{(\overline L(tf)|_Y^{\dim(Y)+1})_S-(\overline L^{\dim(Y)+1}|_Y)_S}{(\dim(Y)+1)\deg_L(Y)t},\]
provided that $(\mathcal O_X, \mathrm{e}^{-f})$ formes an integrable adelic line bundle.
\end{exem}

\begin{exem}
Let $(M_1,\psi_1),\ldots,(M_d,\psi_d)$ be relatively nef adelic line bundles on $X$. For any $\omega\in\Omega$, let 
\[\eta_\omega=c_1(M_{1,\omega},\psi_{1,\omega})\cdots c_1(M_{d,\omega},\psi_{d,\omega}).\]
By Corollary \ref{Cor: several nef case}, $(\eta_\omega)_{\omega\in\Omega}$ forms a Borel measure family on $X$ over $\Omega$.
\end{exem}

\begin{prop}\label{Pro: measurability logarithmic}
Let $\Omega'$ be an element of $\mathcal A$ and $\eta=(\eta_\omega)_{\omega\in\Omega'}$ be a Borel measure family on $X$ over $\Omega'$. Let $\overline M=(M,\psi)$ be an adelic line bundle on $X$ and $s$ be a non-zero global section of $M$. Then the function 
\[(\omega\in\Omega')\longmapsto\int_{X_{\omega}^{\mathrm{an}}}(-\ln|s|_{\psi_\omega}(x))\,\eta_{\omega}(\mathrm{d}x)\]
is $\mathcal A$-measurable and is bounded from below by an integrable function.
\end{prop}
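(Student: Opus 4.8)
The plan is to reduce the statement to an approximation argument: write $-\ln|s|_{\psi_\omega}$ as a decreasing (or at least suitably monotone) limit of continuous functions coming from metric families in $\mathscr C^0_{\mathrm a}(X)$, apply the defining measurability property of the Borel measure family $\eta$ to each approximant, and then pass to the limit using a monotone-type convergence theorem together with Remark~\ref{Rem: finitenesss of measure} to control the integrals. The function $-\ln|s|_{\psi_\omega}$ is upper semicontinuous on $X_\omega^{\mathrm{an}}$ and takes values in $(-\infty,+\infty]$, with the $+\infty$ locus being the zero divisor $\operatorname{div}(s)$; away from that locus it is continuous, and it is bounded above on all of $X_\omega^{\mathrm{an}}$ by $\sup_{x}(-\ln|s|_{\psi_\omega}(x))$, a quantity that is integrable in $\omega$ since $\psi$ is dominated and $s$ is a fixed global section.

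The key steps, in order:

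\emph{Step 1 (Truncation from above is harmless).} For each $c\in\mathbb R$ set $g_\omega^{(c)} := \min(-\ln|s|_{\psi_\omega}, c)$. Since $-\ln|s|_{\psi_\omega}\leqslant \sup_x(-\ln|s|_{\psi_\omega}(x))$ and the latter is essentially bounded in $\omega$ — indeed by a $\nu$-integrable function, because replacing $\psi$ by the trivial metric changes things by a dominated amount and $|s|$ for the trivial metric is bounded — for $c$ large (depending measurably on $\omega$, but we can even take $c$ a constant outside a null set after first truncating $\Omega'$) the truncation changes nothing. So it suffices to prove measurability of $\omega\mapsto\int_{X_\omega^{\mathrm{an}}} g_\omega^{(c)}\,\eta_\omega$ for a fixed constant $c$, and then that the untruncated integral agrees with it up to a null set, which follows once we also establish the lower bound in Step~4.

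\emph{Step 2 (Approximation from below by continuous adelic families).} Fix a relatively ample $\overline A=(A,\varphi)$ and note that for every $\varepsilon>0$ the metric family $\psi+\varepsilon\varphi$ on $M\otimes A^{\otimes ?}$... more simply: use Proposition~\ref{prop:dominated:criterion} and the identification $\mathscr C^0_{\mathrm a}(X)\cong\mathscr M(\mathcal O_X)$ to observe that $h_\omega := \max(-\ln|s|_{\psi_\omega}, -N)$ for a constant $N$ is, after subtracting the continuous function $-\ln|s'|$ for a fixed auxiliary section, realized by a genuine element of $\mathscr C^0_{\mathrm a}(X)$ up to a globally bounded correction; more robustly, one writes $g_\omega^{(c)} = \lim_{N\to\infty}\max(g_\omega^{(c)}, -N)$, a decreasing limit of bounded measurable families each of which is continuous and dominated, hence in $\mathscr C^0_{\mathrm a}(X;\Omega')$ (measurability of these truncations follows from measurability of $-\ln|s|_\psi$ as a metric family, i.e.\ from the hypothesis that $\overline M$ is an adelic line bundle, via \cite[Proposition 6.1.12]{CMArakelovAdelic} and Proposition~\ref{prop:dominated:criterion}).

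\emph{Step 3 (Apply the measure family property and pass to the limit).} For each $N$, $\omega\mapsto\int_{X_\omega^{\mathrm{an}}}\max(g_\omega^{(c)}, -N)\,\eta_\omega$ is $\mathcal A|_{\Omega'}$-measurable by Definition~\ref{def:Borel:measure:family}. As $N\to\infty$ the integrands decrease pointwise on $X_\omega^{\mathrm{an}}$ to $g_\omega^{(c)}$, and since $\eta_\omega(X_\omega^{\mathrm{an}})$ is finite the monotone convergence theorem (applied to $N + \max(g^{(c)},-N)\geqslant 0$, or equivalently to $c - \max(g^{(c)},-N)$) gives $\int \max(g_\omega^{(c)},-N)\,\eta_\omega \to \int g_\omega^{(c)}\,\eta_\omega$ for every $\omega\in\Omega'$. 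A pointwise limit of measurable functions is measurable, so $\omega\mapsto\int g_\omega^{(c)}\,\eta_\omega$ is $\mathcal A|_{\Omega'}$-measurable; by Step~1 this equals $\omega\mapsto\int(-\ln|s|_{\psi_\omega})\,\eta_\omega$ outside a null set, giving $\mathcal A$-measurability.

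\emph{Step 4 (Lower bound by an integrable function).} On $X_\omega^{\mathrm{an}}$ one has the pointwise bound $-\ln|s|_{\psi_\omega}(x) \geqslant -\sup_{y\in X_\omega^{\mathrm{an}}}\bigl(\ln|s|_{\psi_\omega}(y)\bigr)$. Writing $B(\omega) := \sup_{y}\ln|s|_{\psi_\omega}(y)$, the hypothesis that $\psi$ is a dominated metric family and that $s$ is a fixed nonzero section forces $B$ to be dominated by a $\nu$-integrable function (compare $\psi$ to the trivial metric: the difference is dominated, and for the trivial absolute value on a field $|s|$ is $1$ on the locus where $s\neq 0$, so $\ln|s|$ for the trivial metric is $\leqslant 0$). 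Hence $\int_{X_\omega^{\mathrm{an}}}(-\ln|s|_{\psi_\omega})\,\eta_\omega \geqslant -B(\omega)\,\eta_\omega(X_\omega^{\mathrm{an}})$, and by Remark~\ref{Rem: finitenesss of measure} the function $\omega\mapsto\eta_\omega(X_\omega^{\mathrm{an}})$ is essentially bounded, so the right-hand side is bounded below by a $\nu$-integrable function. This is exactly the asserted lower bound.

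\medskip

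The main obstacle is Step~2: one must be careful that the truncations $\max(-\ln|s|_{\psi_\omega},-N)$ genuinely assemble into a \emph{measurable} (and trivially dominated) family of continuous functions on $X_\omega^{\mathrm{an}}$, i.e.\ an element of $\mathscr C^0_{\mathrm a}(X;\Omega')$, so that Definition~\ref{def:Borel:measure:family} applies. This is where the standing hypothesis that $\overline M=(M,\psi)$ is an adelic line bundle — hence $\psi$ measurable — is used: by Proposition~\ref{prop:dominated:criterion} (applied after trivializing $M$ locally and reducing to $\mathcal O_X$) the function $-\ln|s|_\psi$ is a measurable metric family wherever $s$ is nonvanishing, and taking a maximum with a constant $-N$ preserves measurability and produces a globally bounded continuous function. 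Everything else is monotone convergence plus the essential boundedness of the total masses from Remark~\ref{Rem: finitenesss of measure}.
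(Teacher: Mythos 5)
Your high-level plan — truncate, invoke the defining measurability of the Borel measure family $\eta$ on the truncation, pass to the limit, and get the lower bound from $-\ln\|s\|_{\psi_\omega}$ together with Remark~\ref{Rem: finitenesss of measure} — is exactly the paper's strategy, and your Step~4 is correct. But there is a real gap in Steps~1 and~2, caused by a sign/boundedness confusion that makes the whole reduction collapse.

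You claim that $-\ln|s|_{\psi_\omega}$ is upper semicontinuous and bounded above by $\sup_x(-\ln|s|_{\psi_\omega}(x))$, ``a quantity that is integrable in $\omega$.'' This is backwards. Since $|s|_{\psi_\omega}$ is continuous and vanishes along $\operatorname{div}(s)$, the function $-\ln|s|_{\psi_\omega}$ is \emph{lower} semicontinuous with values in $(-\infty,+\infty]$, and for every non-trivially valued $\omega$ with $\operatorname{div}(s)\neq 0$ one has $\sup_x(-\ln|s|_{\psi_\omega}(x))=+\infty$. What is finite and integrable is the \emph{infimum}, $\inf_x(-\ln|s|_{\psi_\omega}(x))=-\ln\|s\|_{\psi_\omega}$, which is what you correctly use in Step~4. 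Consequently your Step~1 is false: for no fixed constant $c$ does $\min(-\ln|s|_{\psi_\omega},c)$ agree with $-\ln|s|_{\psi_\omega}$; the two differ on a neighborhood of the zero locus. One cannot ``take $c$ a constant outside a null set''; the untruncated integral is genuinely a limit $c\to+\infty$ of the truncated ones (by monotone convergence, since the truncated integrands increase to the limit and the sequence is bounded below by the integrable quantity $-\ln\|s\|_{\psi_\omega}$). Your Step~2 truncation from below by $-N$ is, by contrast, entirely redundant, since the integrand is already bounded below by $-\ln\|s\|_{\psi_\omega}$, which is $\nu$-integrable by \cite[Proposition 6.2.12]{CMArakelovAdelic}. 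So you spend effort on a truncation that is not needed and skip the one that is.

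There is a second, subtler issue you gesture at but do not resolve: even with the limit $c\to+\infty$ put in correctly, the truncated family $(\min(-\ln|s|_{\psi_\omega},c))_{\omega}$ with a nonzero constant $c$ is generally \emph{not} dominated, because a nonzero constant is not $\nu$-integrable when $\nu(\Omega)=\infty$, so it does not lie in $\mathscr C^0_{\mathrm a}(X;\Omega')$ and Definition~\ref{def:Borel:measure:family} cannot be applied. This is exactly why the paper truncates by $tA(\omega)$ for a carefully chosen nonnegative $\nu$-integrable $A$ that is strictly positive precisely on the $\sigma$-finite set $(\Omega\setminus\Omega_0)\cup\Omega_{0,s}$ where $-\ln|s|_{\psi_\omega}$ can be nonzero. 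Outside this set the function is identically zero so the truncation by $0=tA(\omega)$ is correct, and on this set one recovers the full function in the limit $t\to+\infty$. Fixing both points — replacing the constant truncation parameter by $tA(\omega)$ and honestly taking the limit $t\to\infty$ with monotone convergence — would turn your proposal into the paper's proof; as written the argument has a genuine hole.
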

\begin{proof}

Let $\Omega_0$ be the set of $\omega\in\Omega$ such that $|\ndot|_\omega$ is trivial. Then 
\[\Omega\setminus\Omega_0=\bigcup_{a\in K\setminus\{0\}}\{\omega\in\Omega\,:\,|a|_{\omega}\neq 1\}\]
is $\sigma$-finite (namely a countable union of elements of $\mathcal A$ which have a finite measure). Moreover, by \cite[Proposition 6.1.12]{CMArakelovAdelic}, a comparison with the trivial metric family over $\Omega_0$ shows that the function 
\[(\omega\in\Omega_0)\longmapsto \sup_{x\in X_\omega^{\mathrm{an}}}\big|\ln|s|_{\psi_\omega}(x)\big|\]
is integrable. Therefore the set
\[\Omega_{0,s}=\{\omega\in\Omega_0\,:\,\text{$|s|_{\psi_\omega}$ is not identically $1$}\}\]
is $\sigma$-finite. Therefore, we may choose a non-negative $\nu$-integrable function $A$ on $\Omega$ such that $A(\omega)>0$ for any $\omega\in(\Omega\setminus\Omega_0)\cup\Omega_{0,s}$. In fact, if we write $(\Omega\setminus\Omega_0)\cup\Omega_{0,s}$ as a countable union $\bigcup_{n\in\mathbb N}B_n$, where each $B_n$ is an element of finite measure in $\mathcal A$, then, with arbitrary choices of positive real numbers $b_n$ such that $b_n\nu(B_n)\leqslant 2^{-n}$, $n\in\mathbb N$, the following function is $\nu$-integrable
\[\sum_{n\in\mathbb N}b_n\indic_{B_n}\]
and vanishes nowhere on $(\Omega\setminus\Omega_0)\cup\Omega_{0,s}$.

For any $t>0$ and any $\omega\in\Omega$, let $f_{t,\omega}:X_\omega^{\mathrm{an}}\rightarrow\mathbb R$ be the function defined as follows:
\[f_{t,\omega}(x):=\min\{-\ln|s|_{\psi_\omega}(x),tA(\omega)\}.\]
This is a continuous function on $X_\omega^{\mathrm{an}}$, which yields a continuous metric $\mathrm{e}^{-f_\omega}$ on $\mathcal O_{X_\omega}$. Moreover, the function $f_{t,\omega}$ is bounded from below by $\min \{ -\ln\|s\|_{\psi_\omega}, tA(\omega) \}$ and bounded from above by $tA(\omega)$. By Proposition \cite[Proposition 6.2.12]{CMArakelovAdelic}, the function
\[g_s:\Omega\rightarrow\mathbb R,\quad (\omega\in\Omega)\longmapsto -\ln\|s\|_{\psi_\omega} \]
is integrable. Therefore, the family $f_t=(f_{t,\omega})_{\omega\in\Omega}$ belongs to $\mathscr C^0_{\mathrm{a}}(X)$ and hence  the function
\[(\omega\in\Omega')\longmapsto \int_{X_\omega^{\mathrm{an}}}f_{t,\omega}(x)\,\eta_\omega(\mathrm{d}x)\]
is $\mathcal A$-measurable. Passing to limit when $t\rightarrow+\infty$, we obtain the measurability of the function 
\[(\omega\in\Omega')\longmapsto\int_{X_{\omega}^{\mathrm{an}}}(-\ln|s|_{\psi_\omega}(x))\,\eta_{\omega}(\mathrm{d}x).\] Finally, by definition, for any $\omega\in\Omega'$, one has 
\[\int_{X_\omega^{\mathrm{an}}}(-\ln|s|_{\psi_\omega}(x))\,\eta_\omega(\mathrm{d}x)\geqslant -\ln\|s\|_{\psi_\omega}\eta_{\omega}(X_\omega^{\mathrm{an}}).\]
Since the function $(\omega\in\Omega')\mapsto\eta_\omega(X_\omega^{\mathrm{an}})$ is essentially bounded, the second assertion is true.
\end{proof}

\begin{prop}\label{Pro: integrability on an open set}
Let $U$ be a non-empty Zariski open subset of $X$, and $f=(f_\omega)_{\omega\in\Omega}$ be a measurable family, where each $f_\omega$ is a continuous function on $U_{\omega}^{\mathrm{an}}$, such that there exists a $\nu$-integrable function $g:\Omega\rightarrow\mathbb R$ satisfying
\[\forall\,\omega\in\Omega,\;\forall\,x\in X_\omega^{\mathrm{an}},\quad f_\omega(x)\geqslant g(\omega).\]
Let $\Omega'$ be a $\sigma$-finite element of $\mathcal A$ and $(\eta_\omega)_{\omega\in\Omega}$ be a Borel measure family on $X$ over $\Omega'$. Assume that, there exist an adelic line bundle $(M,\psi)$ and a non-zero section $s\in H^0(X,M)$ such that the non-vanishing locus of $s$ is contained in $U$ and that 
\[\int_{X_\omega^{\mathrm{an}}}(-\ln|s|_{\psi_\omega}(x))\,\eta_\omega(\mathrm{d}x)<+\infty\quad\text{ $\nu$-almost everywhere on $\Omega'$}.\] 
Then the function 
\[(\omega\in\Omega)\longmapsto \int_{U_\omega^{\mathrm{an}}}f_\omega(x)\,\eta_\omega(\mathrm{d}x) \]
is $\mathcal A$-measurable.
\end{prop}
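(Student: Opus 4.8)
The strategy is to reduce the integral over $U_\omega^{\mathrm{an}}$ to integrals over the whole of $X_\omega^{\mathrm{an}}$ involving only genuine adelic families (so that Definition~\ref{def:Borel:measure:family} applies) together with the section $s$ whose measurability is supplied by Proposition~\ref{Pro: measurability logarithmic}. Concretely, I would exhaust $U_\omega^{\mathrm{an}}$ by truncation with the function $-\ln|s|_{\psi_\omega}$: since the non-vanishing locus of $s$ is contained in $U$, on $X_\omega^{\mathrm{an}}\setminus U_\omega^{\mathrm{an}}$ one has $|s|_{\psi_\omega}=0$, i.e.\ $-\ln|s|_{\psi_\omega}=+\infty$ there. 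For $t>0$ set $\chi_{t,\omega}$ to be a continuous cut-off depending only on $-\ln|s|_{\psi_\omega}(x)$, equal to $1$ where $-\ln|s|_{\psi_\omega}(x)\leqslant t$ and equal to $0$ where $-\ln|s|_{\psi_\omega}(x)\geqslant t+1$ (say $\chi_{t,\omega}(x)=\max\{0,\min\{1,t+1+\ln|s|_{\psi_\omega}(x)\}\}$), which extends continuously by $0$ to all of $X_\omega^{\mathrm{an}}$. Then $\chi_{t,\omega}\,f_\omega$ — after subtracting the lower bound $g$ to make it non-negative, or more simply writing $f_\omega=(f_\omega-g(\omega))+g(\omega)$ with $f_\omega-g(\omega)\geqslant 0$ — is a genuine continuous function on $X_\omega^{\mathrm{an}}$, but it need not be bounded above on $X_\omega^{\mathrm{an}}$ uniformly. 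So I would further truncate from above: consider $\min\{f_\omega(x)-g(\omega),\,n\}\cdot\chi_{t,\omega}(x)+g(\omega)\chi_{t,\omega}(x)$ for a positive integer $n$, call this $F_{n,t,\omega}$. Each such $F_{n,t}$ is dominated (bounded by $n+|g(\omega)|$ pointwise, and $g$ is integrable) and measurable (the family $\chi_t=(\chi_{t,\omega})_\omega$ is measurable since $-\ln|s|_{\psi}$ is a measurable family by the theory of adelic line bundles, and products/min of measurable families are measurable, cf.\ the stability properties recorded in \cite{CMArakelovAdelic}), hence $F_{n,t}\in\mathscr C^0_{\mathrm a}(X)$.

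Having arranged $F_{n,t}\in\mathscr C^0_{\mathrm a}(X;\Omega')$, Definition~\ref{def:Borel:measure:family} gives that $(\omega\in\Omega')\mapsto\int_{X_\omega^{\mathrm{an}}}F_{n,t,\omega}(x)\,\eta_\omega(\mathrm{d}x)$ is $\mathcal A|_{\Omega'}$-measurable. Now I pass to limits. First let $n\to+\infty$: for each fixed $t$ and each $\omega$, by monotone convergence (the integrand is non-negative after subtracting the constant $g(\omega)\chi_{t,\omega}$, which is integrable against $\eta_\omega$ since $\eta_\omega$ has finite mass — use Remark~\ref{Rem: finitenesss of measure}) one gets convergence to $\int_{U_\omega^{\mathrm{an}}}\chi_{t,\omega}(x)f_\omega(x)\,\eta_\omega(\mathrm{d}x)$, which is therefore measurable in $\omega$ as a pointwise limit of measurable functions. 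Then let $t\to+\infty$: since $\chi_{t,\omega}\uparrow\indic_{U_\omega^{\mathrm{an}}}$ pointwise as $t\to+\infty$ (because $-\ln|s|_{\psi_\omega}(x)<+\infty$ exactly on $U_\omega^{\mathrm{an}}$), and $f_\omega-g(\omega)\geqslant 0$, monotone convergence again yields $\int_{U_\omega^{\mathrm{an}}}\chi_{t,\omega}f_\omega\,\eta_\omega(\mathrm{d}x)\to\int_{U_\omega^{\mathrm{an}}}f_\omega\,\eta_\omega(\mathrm{d}x)$ for every $\omega$, so the latter is measurable in $\omega$ as a pointwise limit. This proves the assertion.

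The one point that requires care — and I expect it to be the main obstacle — is that the lower-truncation $\chi_{t,\omega}$ really does depend on $\omega$ in a measurable way and that multiplying by it preserves membership in $\mathscr C^0_{\mathrm a}(X)$. This is where the hypothesis that $\Omega'$ is $\sigma$-finite and that $\int_{X_\omega^{\mathrm{an}}}(-\ln|s|_{\psi_\omega})\,\eta_\omega(\mathrm{d}x)<+\infty$ $\nu$-a.e.\ enters: the a.e.\ finiteness guarantees $\eta_\omega(X_\omega^{\mathrm{an}}\setminus U_\omega^{\mathrm{an}})=0$ for $\nu$-almost all $\omega$ (otherwise the integral of $-\ln|s|_{\psi_\omega}=+\infty$ over a positive-mass set would diverge), so that in fact the truncation "loses nothing" in the limit and no boundary contribution survives; and $\sigma$-finiteness lets us invoke the monotone-convergence arguments and the integrability bookkeeping (as in the proof of Proposition~\ref{Pro: measurability logarithmic}) without having to worry about non-$\sigma$-finite pathologies. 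The measurability of the family $-\ln|s|_{\psi}$ itself is built into the definition of an adelic line bundle, and the stability of measurable families under $\min$, sums and products with continuous functions — hence of $\chi_t$ and of $F_{n,t}$ — is exactly the content of the measurability criterion of \cite[Definition~6.1.27]{CMArakelovAdelic} combined with \cite[Proposition~6.1.12]{CMArakelovAdelic}; I would cite these rather than reprove them.
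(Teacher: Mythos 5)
Your proposal follows the same underlying strategy as the paper — truncate $f$ so as to land in $\mathscr C^0_{\mathrm a}(X;\Omega')$, apply the defining property of a Borel measure family, and then pass to the limit by monotone convergence — but the upper truncation as you wrote it has a genuine gap. You bound $F_{n,t}$ by $n+|g(\omega)|$ and claim this yields dominatedness; however $\sigma$-finiteness of $\Omega'$ does \emph{not} imply $\nu(\Omega')<\infty$, so the constant $n$ need not be $\nu$-integrable, and then $n+|g(\omega)|$ fails to be integrable. In that case $F_{n,t}\notin\mathscr C^0_{\mathrm a}(X;\Omega')$ and Definition~\ref{def:Borel:measure:family} cannot be invoked. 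This is precisely the point where $\sigma$-finiteness actually enters the argument: you must first choose a non-negative $\nu$-integrable function $A$ on $\Omega$ with $A(\omega)>0$ for all $\omega\in\Omega'$ (possible exactly because $\Omega'$ is $\sigma$-finite, by the construction spelled out in the proof of Proposition~\ref{Pro: measurability logarithmic}), and then truncate by $nA(\omega)$ instead of $n$. With that change the bound $nA(\omega)+|g(\omega)|$ is integrable and the rest of your argument (the two monotone-convergence passages, the observation that $\eta_\omega(X_\omega^{\mathrm{an}}\setminus U_\omega^{\mathrm{an}})=0$ $\nu$-a.e.) goes through. Your closing paragraph gestures at $\sigma$-finiteness but leaves vague exactly what it is for; it is this integrable weight, not a vague ``bookkeeping'' role.

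Once fixed, your route differs mildly from the paper's. The paper uses a single truncation $f_{t,\omega}=\min\{f_\omega-\ln|s|_{\psi_\omega},tA(\omega)\}$, extended by $tA(\omega)$ on the zero set of $s$; this is continuous on all of $X_\omega^{\mathrm{an}}$, bounded above by $tA(\omega)$, bounded below by the integrable function $\min\{g(\omega)-\ln\|s\|_{\psi_\omega},tA(\omega)\}$, hence in $\mathscr C^0_{\mathrm a}(X)$. One limit $t\to\infty$ gives the measurability of $\omega\mapsto\int_{X_\omega^{\mathrm{an}}}(f_\omega-\ln|s|_{\psi_\omega})\,\eta_\omega$, and one then subtracts $\omega\mapsto\int_{X_\omega^{\mathrm{an}}}(-\ln|s|_{\psi_\omega})\,\eta_\omega$ (measurable by Proposition~\ref{Pro: measurability logarithmic}, finite $\nu$-a.e.\ by hypothesis) to conclude. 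Your double truncation by $n$ and by the cutoff $\chi_t$ avoids that final subtraction and the direct appeal to Proposition~\ref{Pro: measurability logarithmic}, at the cost of two limit passages and the verification that $\chi_t$ extends continuously by zero across $\{s=0\}$ and has a measurable dependence on $\omega$. Both work, but the single-truncation route is shorter.
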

\begin{proof}
We choose a non-negative $\nu$-integrable function $A$ on $\Omega$ such that $A(\omega)>0$ for any $\omega\in \Omega'$. This is possible since $\Omega'$ is $\sigma$-finite. Moreover, without loss of generality, we may assume (by Remark \ref{Rem: finitenesss of measure} and the condition of the proposition on $(\eta_\omega)_{\omega\in\Omega}$) that 
\[\int_{X_\omega^{\mathrm{an}}}(-\ln|s|_{\psi_\omega}(x))\,\eta_\omega(\mathrm{d}x)\in\mathbb R\]
for any $\omega\in\Omega'$.

For any $t>0$ and any $\omega\in\Omega$, let $f_{t,\omega}:X_\omega^{\mathrm{an}}\rightarrow\mathbb R$ be the function defined as
\[f_{t,\omega}(x):=\min\{f_\omega(x)-\ln|s|_{\psi_\omega}(x),tA(\omega)\},\]
where by convention $f_{t,\omega}(x)=tA(\omega)$ when $s(x)=0$. Since $f_\omega(x)$ is continuous and $-\ln|s|_{\psi_{\omega}}(x)$ tends to $+\infty$ when $x$ tends to some point $x_0\in X_\omega^{\mathrm{an}}$ such that $s(x_0)=0$, we obtain that the function $f_{t,\omega}$ is continuous on $X_\omega^{\mathrm{an}}$. Moreover, the function $f_{t,\omega}$ is bounded from above by $tA(\omega)$ and bounded from below by \[\min \{ g(\omega)-\ln\|s\|_{\psi_\omega}, tA(\omega) \}.\] Therefore the family $(f_{t,\omega})_{\omega\in\Omega}$ belongs to $\mathscr C^0_{\mathrm{a}}(X)$. Hence by Proposition \ref{Pro: measurability logarithmic} we obtain the measurability of the function
\[(\omega\in\Omega)\longmapsto\int_{X_\omega^{\mathrm{an}}}(f_\omega(x)-\ln|s|_{\psi_\omega}(x))\,\eta_\omega(\mathrm{d}x).\]
\end{proof}

\begin{rema}
Let $(L,\varphi)$ be a relatively nef adelic line bundle. For any $\omega\in\Omega$, let $\eta_\omega=c_1(L_\omega,\varphi_\omega)^d$. Then, for any adelic line bundle $(M,\psi)$ and any non-zero section $s\in H^0(X,M)$, one has 
\[\forall\,\omega\in\Omega,\quad \int_{X_\omega^{\mathrm{an}}}(-\ln|s|_{\psi_\omega}(x))\,\eta_\omega(\mathrm{d}x)\in\mathbb R.\]
We refer to \cite{MR2543659} for the non-Archimedean case.
\end{rema}

\section{Global adelic space}

In this section, we fix a 
adelic curve $S=(K,(\Omega,\mathcal A,\nu),\phi)$ such that $K$ is countable and perfect.  
Let $\pi:X\rightarrow\Spec K$ be an integral projective scheme.  Denote by $K(X)$ the field of rational functions on $X$. This is also a countable field.

\subsection{Adelic measure space}\label{subsec:adelic:measure:space}

Let $X$ be a projective scheme over $K$ and $\Omega'$ be an element of $\mathcal A$. 
We denote by $X^{\mathrm{an}}_{\Omega'}$ the disjoint union $\coprod_{\omega\in\Omega'}X_\omega^{\mathrm{an}}$. Denote by $\pi:X_{\Omega'}^{\mathrm{an}}\rightarrow\Omega'$ the map sending the elements of $X_\omega^{\mathrm{an}}$ to $\omega$. For any Zariski open subset $U$ of $X$, let $U_{\Omega'}^{\mathrm{an}}$ be the disjoint union $\coprod_{\omega\in\Omega'}U_\omega^{\mathrm{an}}$.

\begin{defi}\label{def:sigma:algebra:adelic:measure:space}
We equip $X_{\Omega'}^{\mathrm{an}}$ with the smallest $\sigma$-algebra $\mathcal B_{X,\Omega'}$ which satisfies the following conditions:
\begin{enumerate}[label=\rm(\arabic*)]
\item the map $\pi:X_{\Omega'}^{\mathrm{an}}\rightarrow \Omega'$ is measurable,
\item for any Zariski open subset $U$ of $X$, the set $U_{\Omega'}^{\mathrm{an}}$ belongs to $\mathcal B_{X,\Omega'}$,
\item 
for any Zariski open subset $U$ of $X$ and any measurable family $f$ of continuous functions over $U$ (i.e. $f \in \mathscr{C}^0_{\mathrm{m}}(U)$),
the function
$f_{\Omega'}$ on $U_{\Omega'}^{\mathrm{an}}$ defined as
\[\forall\,\omega\in\Omega',\;\forall\,x\in U_\omega^{\mathrm{an}},\quad f_{\Omega'}(x):=f_{\omega}(x)\]
is $\mathcal B_{X,\Omega'}|_{U_{\Omega'}^{\mathrm{an}}}$-measurable.
\end{enumerate} 
\end{defi}

\begin{rema}
The above third condition can be replaced by the following (3)':
\begin{enumerate}[label=\rm(\arabic*)']\setcounter{enumi}{2}
\item for any Zariski open subset $U$ of $X$, any  measurable\footnote{We refer to \cite[Definition 6.1.27]{CMArakelovAdelic} for the measurability of metric family. Note that the projectivity of the scheme is assume there. However, the definition can be easily extended to the case of any scheme of finite type over the adelic curve.}  metric family $\varphi$ on $\mathcal O_U$, and any regular function $b$ on $U$, the function $|b|_{\varphi,\Omega'}$ on $U_{\Omega'}^{\mathrm{an}}$ defined as
\[\forall\,\omega\in\Omega',\;\forall\,x\in U_\omega^{\mathrm{an}},\quad |b|_{\varphi,\Omega'}(x):=|b|_{\varphi_\omega}(x)\]
is $\mathcal B_{X,\Omega'}|_{U_{\Omega'}^{\mathrm{an}}}$-measurable.
\end{enumerate}
\end{rema}

\begin{rema}\label{Rem: mesurability abs value}
Let $U$ be a  Zariski open subset of $X$. We consider the trivial metric family on $\mathcal O_U$. Then the point (3) in the above definition shows that, for any regular function $f$ on $U$, the function $|f|_{\Omega'}:U_{\Omega'}^{\mathrm{an}}\rightarrow\mathbb R_{\geqslant 0}$, which sends $x\in U_\omega^{\mathrm{an}}$ to $|f|_\omega(x)$, is $\mathcal B_{X,\Omega'}|_{U_{\Omega'}^{\mathrm{an}}}$-measurable.

Assume that the scheme $X$ is integral. Let $q$   be a rational function on $X$ and $U$ be the maximal open subscheme over which the rational function $q$ is defined. We consider the function $|q|_{\Omega'}$ on $X_{\Omega'}^{\mathrm{an}}$ sending $x\in X_\omega^{\mathrm{an}}$ to $|q|_\omega(x)$. Note that, on the Zariski open subset $U$, the rational function $q$ coincides with a regular function $b$ on $U$. Moreover, the following equality holds
\[|q|_{\Omega'}(x)=\begin{cases}
|b|_{\Omega'}(x),&\text{if $x\in U_{\Omega'}^{\mathrm{an}}$},\\
+\infty,&\text{if $x\in X_{\Omega'}^{\mathrm{an}}\setminus U_{\Omega'}^{\mathrm{an}}$.}
\end{cases}\]
In particular, the function $|q|_{\Omega'}$ is $\mathcal B_{X,\Omega'}$-measurable.
\end{rema}

\begin{prop}\label{Pro: measurability of local interals}
Let $f:X_{\Omega'}^{\mathrm{an}}\rightarrow \mathbb R_{\geqslant 0}$ be a $\mathcal B_{X,\Omega'}$-measurable function. 
\begin{enumerate}[label=\rm(\arabic*)]
\item For any $\omega\in\Omega'$, $f|_{X_\omega^{\mathrm{an}}}$ is a Borel measurable function.
\item Let $\eta=(\eta_{\omega})_{\omega\in\Omega'}$ be a Borel measure  family on $X$ over $\Omega'$. Then the function 
\[(\omega\in\Omega')\longmapsto\int_{X_\omega^{\mathrm{an}}}f(x)\,\eta_{\omega}(\mathrm{d}x)\]
is $\mathcal A|_{\Omega'}$-measurable.
\end{enumerate}
\end{prop}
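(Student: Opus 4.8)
The plan is to reduce everything to the defining properties of the $\sigma$-algebra $\mathcal B_{X,\Omega'}$ and to the definition of a Borel measure family, proceeding by the usual measure-theoretic ``good sets'' / monotone class argument. For part~(1), I would first observe that the inclusion map $\iota_\omega: X_\omega^{\mathrm{an}}\hookrightarrow X_{\Omega'}^{\mathrm{an}}$ is measurable when $X_\omega^{\mathrm{an}}$ is equipped with its Borel $\sigma$-algebra: indeed, the collection of subsets $B\subseteq X_{\Omega'}^{\mathrm{an}}$ whose trace $B\cap X_\omega^{\mathrm{an}}$ is Borel in $X_\omega^{\mathrm{an}}$ is a $\sigma$-algebra, and it contains each of the generators appearing in Definition~\ref{def:sigma:algebra:adelic:measure:space}: the preimages $\pi^{-1}(E)$ for $E\subseteq\Omega'$ (whose trace is either $\emptyset$ or all of $X_\omega^{\mathrm{an}}$), the sets $U_{\Omega'}^{\mathrm{an}}$ (whose trace is the Berkovich-open, hence Borel, set $U_\omega^{\mathrm{an}}$), and the preimages under $f_{\Omega'}$ of Borel sets of $\mathbb R$ for $f\in\mathscr C^0_{\mathrm m}(U)$ (whose trace is a preimage under the continuous function $f_\omega$, hence Borel). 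By minimality of $\mathcal B_{X,\Omega'}$, this ``good sets'' $\sigma$-algebra contains $\mathcal B_{X,\Omega'}$, which is exactly the statement that $f|_{X_\omega^{\mathrm{an}}}$ is Borel for every $\mathcal B_{X,\Omega'}$-measurable $f$.

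For part~(2), the strategy is first to prove measurability of $\omega\mapsto \eta_\omega(B\cap X_\omega^{\mathrm{an}})$ — equivalently $\omega\mapsto\int_{X_\omega^{\mathrm{an}}}\indic_{B}(x)\,\eta_\omega(\mathrm dx)$ — for $B$ in a generating class, then to bootstrap to general $\mathcal B_{X,\Omega'}$-measurable $f\geqslant 0$ by linearity and monotone convergence. The natural generating class to start from is the algebra generated by the sets $\pi^{-1}(E)$ ($E\in\mathcal A|_{\Omega'}$), the sets $U_{\Omega'}^{\mathrm{an}}$, and the sets $f_{\Omega'}^{-1}(\,]a,+\infty[\,)$ for $f\in\mathscr C^0_{\mathrm m}(U)$; I would show that for indicator functions of sets in a suitable (multiplicatively stable) subclass, and more flexibly for products of the form $\indic_{\pi^{-1}(E)}\cdot h_{\Omega'}$ with $h=(h_\omega)$ a \emph{bounded, dominated} measurable family in $\mathscr C^0_{\mathrm a}(U)$, the fiber integral $\omega\mapsto\int_{U_\omega^{\mathrm{an}}}h_\omega\,\eta_\omega(\mathrm dx)$ is $\mathcal A|_{\Omega'}$-measurable. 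The key input here is precisely Proposition~\ref{Pro: integrability on an open set} (together with Proposition~\ref{Pro: measurability logarithmic} and Remark~\ref{Rem: finitenesss of measure}), which hands us the measurability of fiber integrals of dominated continuous families over an open set $U$, provided we have a section $s$ of some adelic line bundle cutting out the complement of $U$ with the requisite integrability; when $U=X$ this is just the defining property of a Borel measure family applied to $\mathscr C^0_{\mathrm a}(X;\Omega')$, extended by zero outside $\Omega'$ via Corollary~\ref{coro:extension:by:zero}. A Dynkin-system (monotone class) argument then upgrades this to all of $\mathcal B_{X,\Omega'}$: the class of nonnegative bounded $\mathcal B_{X,\Omega'}$-measurable functions $f$ for which $\omega\mapsto\int_{X_\omega^{\mathrm{an}}}f\,\eta_\omega(\mathrm dx)$ is measurable is a vector space closed under bounded monotone limits and containing the multiplicatively stable generating family, hence contains all bounded $\mathcal B_{X,\Omega'}$-measurable functions; a final monotone-convergence step (using that each $\eta_\omega$ is a (nonnegative) Borel measure, so truncations $f\wedge n$ increase to $f$) removes the boundedness restriction and gives the assertion for all $\mathcal B_{X,\Omega'}$-measurable $f:X_{\Omega'}^{\mathrm{an}}\to\mathbb R_{\geqslant 0}$ (allowing the value $+\infty$).

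The main obstacle I anticipate is the bookkeeping at the boundary between ``functions living on an open set $U$'' and ``functions living on all of $X$'': the generators of $\mathcal B_{X,\Omega'}$ involve continuous families $f_\omega$ defined only on $U_\omega^{\mathrm{an}}$ (which blow up near $X\setminus U$), so one cannot directly feed them into the definition of Borel measure family, which concerns $\mathscr C^0_{\mathrm a}(X)$. This is exactly the gap that Proposition~\ref{Pro: integrability on an open set} is designed to bridge, via the regularizing section $s$ and the truncation trick $\min\{f_\omega-\ln|s|_{\psi_\omega},\,tA(\omega)\}$; the delicate point is checking that the relevant sets $U$ and families $f$ arising as generators really do admit such a section $s$ of an adelic line bundle with $\int_{X_\omega^{\mathrm{an}}}(-\ln|s|_{\psi_\omega})\,\eta_\omega(\mathrm dx)<+\infty$ $\nu$-a.e., and otherwise reducing the general open $U$ to a basic affine cover (where $X\setminus U$ is a divisor). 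Once that reduction is cleanly set up, the rest is the standard $\pi$–$\lambda$ / monotone class machinery and poses no essential difficulty.
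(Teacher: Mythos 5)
Your plan reproduces the paper's own argument: both parts are handled at once by a functional monotone class ($\lambda$-family) argument, with the multiplicatively stable generating class taken to be $\mathcal C=\{\indic_{\pi^{-1}(A)}|b|_{\varphi,\Omega'}\}$ (with $A\in\mathcal A$ contained in $\Omega'$, $b$ regular on a Zariski open $U$, $\varphi$ a measurable metric family on $\mathcal O_U$, extended by zero off $U_{\Omega'}^{\mathrm{an}}$), and Proposition~\ref{Pro: integrability on an open set} is precisely the ingredient the paper uses to place $\mathcal C$ inside the $\lambda$-family $\mathcal H_\eta$; the identity $|b|_{\varphi,\Omega'}\cdot|b'|_{\varphi',\Omega'}=|bb'|_{\varphi+\varphi',\Omega'}$ on $U\cap U'$ makes multiplicative stability transparent. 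The ``delicate point'' you flag at the end --- verifying that the generating families admit a regularizing section $s$ with $\int_{X_\omega^{\mathrm{an}}}(-\ln|s|_{\psi_\omega})\,\eta_\omega(\mathrm dx)<+\infty$ $\nu$-almost everywhere, as demanded in the hypotheses of Proposition~\ref{Pro: integrability on an open set} --- is indeed real, and the paper's proof cites that proposition without spelling this verification out.
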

\begin{proof}
If $U$ is a Zariski open subset of $X$, and $\varphi$ is a measurable metric family on $\mathcal O_U$, and $b$ is a regular function on $U$, we extend the domain of definition of $|b|_{\varphi,\Omega'}$ to $X_{\Omega'}^{\mathrm{an}}$ by letting $|b|_{\varphi,\Omega'}(x)=0$ for $x\in X_{\Omega'}^{\mathrm{an}}\setminus U_{\Omega'}^{\mathrm{an}}$. In this way we can consider $|b|_{\varphi,\Omega'}$ as a $\mathcal B_{X,\Omega'}$-measurable function on $X_{\Omega'}^{\mathrm{an}}$.

Let $\mathcal H_\eta$ be the set of bounded functions $f:X_{\Omega'}^{\mathrm{an}}\rightarrow\mathbb R$ which satisfies the condition predicted in the proposition, namely  $f|_{X_\omega^{\mathrm{an}}}$ is a Borel function for any $\omega\in\Omega'$, and the function
\[(\omega\in\Omega')\longmapsto\int_{X_\omega^{\mathrm{an}}}f(x)\,\eta_\omega(\mathrm{d}x)\]
is $\mathcal A|_{\Omega'}$-measurable. Note that $\mathcal H_{\eta}$ is a $\lambda$-family, namely
\begin{enumerate}[label=\rm(\roman*)]
\item the constant function $1$ belongs to $\mathcal H_\eta$;
\item if $f$ and $g$ are two elements of $\mathcal H_\eta$,  and $a$ and $b$ are non-negative numbers, then $af+bg\in\mathcal H_\eta$;
\item if $f$ and $g$ are two elements of $\mathcal H_\eta$ such that $f\leqslant g$, then $g-f\in\mathcal H_\eta$;
\item if $(f_n)_{n\in\mathbb N}$ is an increasing and uniformly bounded sequence of functions in $\mathcal H_\eta$, then the limit of the sequence $(f_n)_{n\in\mathbb N}$ belongs to $\mathcal H_\eta$.
\end{enumerate}

Let $\mathcal C$ be set of functions $X_{\Omega'}^{\mathrm{an}}\rightarrow\mathbb R$ of the form $\indic_{\pi^{-1}(A)}|b|_{\varphi,\Omega'}$, where $A$ is an element of $\mathcal A$ contained in $\Omega'$ and $b$ is a regular function on a Zariski open subset $U$ of $X$. Here we extend the domain of definition of $|b|_{\varphi,\Omega'}$ to $X_{\Omega'}^{\mathrm{an}}$ by letting $|b|_{\varphi,\Omega'}(x)=0$ for $x\in X_{\Omega'}^{\mathrm{an}}\setminus U_{\Omega'}^{\mathrm{an}}$.  Moreover, the $\sigma$-algebra $\mathcal B_{X,\Omega'}$ is equal to the $\sigma$-algebra $\sigma(\mathcal C)$ generated by $\mathcal C$. Note that, for any regular function $b$ on a Zariski open subset $U$ of $X$, for any $\omega\in\Omega'$ the function $|b|_{\varphi,\omega}$ is continuous on $U_\omega^{\mathrm{an}}$. By Proposition \ref{Pro: integrability on an open set}, the family $\mathcal C$ is contained in $\mathcal H$. If $A'$ is another element of $\mathcal A$ contained in $\Omega'$, $b'$ is a regular function on a Zariski open subset $U'$ of $X$ and $\varphi'$ is a measurable metric family of $\mathcal O_{U'}$, then one has 
\[(\indic_{\pi^{-1}(A)}|b|_{\varphi,\Omega'})(\indic_{\pi^{-1}(A')}|b'|_{\varphi',\Omega'})=\indic_{\pi^{-1}(A\cap A')}|bb'|_{\varphi+\varphi',\Omega'},\]
where wen consider $bb'$ as a regular function on $U\cap U'$ and $\varphi+\varphi'$ as a mesurable metric family of $\mathcal O_{U\cap U'}$. The function family $\mathcal C$ is hence stable by multiplication. By monotone class theorem $\mathcal H$ contains all bounded $\sigma(\mathcal C)$-measurable functions. Since any $\mathcal B_{X,\Omega'}$-measurable function can be written as a limit of bounded $\mathcal B_{X,\Omega'}$-measurable functions, the proposition is thus proved.
\end{proof}

\begin{prop}\label{Pro: measurability of green function}
Let $L$ be an invertible $\mathcal O_X$-module and $\varphi$ be a measurable metric family on $L$. For any Zariski open subset $U$ of $X$ and any section $s\in H^0(U,L)$, the function $|s|_{\Omega'}:U_{\Omega'}^{\mathrm{an}}\rightarrow\mathbb R$ sending $x\in U_\omega^{\mathrm{an}}$ to $|s|_\omega(x)$ is $\mathcal B_{X,\Omega'}|_{U_{\Omega'}^{\mathrm{an}}}$-measurable.
\end{prop}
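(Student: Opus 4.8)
The plan is to reduce the statement to condition (3)' from the remark following Definition \ref{def:sigma:algebra:adelic:measure:space}, after trivializing $L$ on a finite open cover of $U$. Since $X$ is a projective $K$-scheme, hence Noetherian, I can choose finitely many Zariski open subsets $V_1,\dots,V_r$ of $X$ with $U=V_1\cup\dots\cup V_r$ such that on each $V_j$ the invertible sheaf $L$ admits a frame $e_j\in H^0(V_j,L)$, i.e. $L|_{V_j}=\mathcal O_{V_j}e_j$. As $V_j\subseteq U$, one may write $s|_{V_j}=b_je_j$ with $b_j$ a regular function on $V_j$.

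Next I would transport $\varphi$ along each trivialization: for $\omega\in\Omega$ let $\psi_{j,\omega}$ be the metric on $\mathcal O_{V_{j,\omega}}$ determined by $|1|_{\psi_{j,\omega}}=|e_j|_{\varphi_\omega}$, which gives a metric family $\psi_j$ on $\mathcal O_{V_j}$. Since $\varphi$ is measurable and the measurability of a metric family is a local condition, stable under restriction to a Zariski open subscheme and under change of local frame (this is where one uses that \cite[Definition 6.1.27]{CMArakelovAdelic} extends to schemes of finite type over $S$, as noted in the footnote to (3)'), the family $\psi_j$ on $\mathcal O_{V_j}$ is again measurable. For $\omega\in\Omega'$ and $x\in V_{j,\omega}^{\mathrm{an}}$ one has $|s|_\omega(x)=|b_j(x)|_\omega\,|e_j|_{\varphi_\omega}(x)=|b_j|_{\psi_{j,\omega}}(x)$, so that the restriction of $|s|_{\Omega'}$ to $(V_j)_{\Omega'}^{\mathrm{an}}$ coincides with the function $|b_j|_{\psi_j,\Omega'}$ appearing in condition (3)'.

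By (3)', applied with $V_j$, $\psi_j$, $b_j$ in place of $U$, $\varphi$, $b$, each $|b_j|_{\psi_j,\Omega'}$ is $\mathcal B_{X,\Omega'}|_{(V_j)_{\Omega'}^{\mathrm{an}}}$-measurable. By condition (2) of Definition \ref{def:sigma:algebra:adelic:measure:space} every $(V_j)_{\Omega'}^{\mathrm{an}}$ lies in $\mathcal B_{X,\Omega'}$, and $U_{\Omega'}^{\mathrm{an}}=\bigcup_{j=1}^r(V_j)_{\Omega'}^{\mathrm{an}}$ is a finite union of members of $\mathcal B_{X,\Omega'}$; since a function is $\mathcal B_{X,\Omega'}$-measurable on such a union as soon as its restriction to each member of the cover is, one concludes that $|s|_{\Omega'}$ is $\mathcal B_{X,\Omega'}|_{U_{\Omega'}^{\mathrm{an}}}$-measurable. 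The only point requiring care is the measurability of $\psi_j$, that is, the good behaviour of the notion of measurable metric family under restriction to an open subscheme and change of local frame; everything else is formal (finiteness of the trivializing cover, the pointwise identity $|s|_\omega=|b_j|_{\psi_{j,\omega}}$ on $V_j$, and gluing of measurability over a finite measurable cover).
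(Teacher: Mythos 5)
Your argument is the same as the paper's: choose a finite trivializing cover $(V_j)$ of $U$, observe that on each piece $|s|_{\Omega'}$ coincides with $|b_j|_{\psi_j,\Omega'}$ for a regular function $b_j$ and the transported metric family $\psi_j$ on $\mathcal O_{V_j}$, invoke condition (3)'/(3) of Definition~\ref{def:sigma:algebra:adelic:measure:space}, and glue over the finite cover. The paper states this in one line by citing (3); you spell out the reduction via (3)' and the behaviour of measurability under trivialization, but the route is identical.
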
 
\begin{proof}
We choose a finite open covering $(U_i)_{i\in I}$ of $U$ such that, on each $U_i$, the invertible $\mathcal O_X$-module $L$ is isomorphic to $\mathcal O_{U_i}$. By definition (c.f. (3) in Definition~\ref{def:sigma:algebra:adelic:measure:space}), the restriction of $|s|_{\Omega'}$ to each $U_{i,\Omega'}^{\mathrm{an}}$ is measurable.
Hence $|s|_{\Omega'}$ is also measurable.
\end{proof}

\begin{defi}
Let  $\Omega'$ be an element of $\mathcal A$ and $\eta$ be a Borel measure family on $X$ over $\Omega'$. Proposition \ref{Pro: measurability of local interals} shows that, the map 
\[(B\in\mathcal B_{X,\Omega'})\longmapsto\int_{\omega\in\Omega'}\nu(\mathrm{d}\omega)\int_{X_{\omega}^{\mathrm{an}}}\indic_B(x)\,\eta_\omega(\mathrm{d}x)\]
defines a measure on the measurable space $(X_{\Omega'}^{\mathrm{an}},\mathcal B_{X,\Omega'})$. 
We denote by $\eta_{\Omega'}$ this measure. By abuse of notation, it is  often denoted by $\eta$. For any non-negative $\mathcal B_{X,\Omega'}$-measurable function $f:X_{\Omega'}^{\mathrm{an}}\rightarrow\mathbb R_{\geqslant 0}$, one has
\[\int_{X_{\Omega'}^{\mathrm{an}}}f(x)\,\eta_{\Omega'}(\mathrm{d}x)=\int_{\Omega'}\nu(\mathrm{d}\omega)\int_{X_{\omega}^{\mathrm{an}}}f(x)\,\eta_\omega(\mathrm{d}x).\] 
\end{defi}

\begin{rema}\label{Rem: unique determined by integrals along C}
Let $U$ be a dense Zariski open set of $X$. Then it is easy to see that $\eta_{\Omega'}(X_{\Omega'}^{\mathrm{an}} \setminus U_{\Omega'}^{\mathrm{an}}) = 0$. Moreover, if there exists an adelic line bundle $(M,\psi)$ such that $M$ is ample and that, for any $n\in\mathbb N_{\geqslant 1}$ and any non-zero section $s\in H^0(X,M^{\otimes n})$, one has 
\[\forall\,\omega\in\Omega,\quad\int_{X_\omega^{\mathrm{an}}}(-\ln|s|_{\psi_\omega}(x))\,\eta_\omega(\mathrm{d}x)\in\mathbb R,\]
then, viewed as a measure on $(X_{\Omega'},\mathcal B_{X,\Omega'})$, $\eta$ is uniquely determined by the integrals of functions in $\mathscr C_{\mathrm{a}}^0(X;\Omega')$. In other words, if $\eta'=(\eta'_\omega)_{\omega\in\Omega}$ is another Borel measure family on $X$ over $\Omega'$ such that 
\[\int_{\omega\in\Omega'}\nu(\mathrm{d}\omega)\int_{X_\omega^{\mathrm{an}}}f_\omega(x)\,\eta_\omega(\mathrm{d}x)=\int_{\omega\in\Omega'}\nu(\mathrm{d}\omega)\int_{X_\omega^{\mathrm{an}}}f_\omega(x)\,\eta_\omega'(\mathrm{d}x),\]
then, as measures on $(X_{\Omega'}^{\mathrm{an}},\mathcal B_{X,\Omega'})$, one has $\eta_{\Omega'}=\eta'_{\Omega'}$. This follows from the proofs of Propositions \ref{Pro: measurability logarithmic} and \ref{Pro: integrability on an open set}.
\end{rema}

\begin{prop}\label{prop:Radon-Nikodym}
Let  $\Omega'$ be an element of $\mathcal A$ and $\eta$ be a Borel measure family on $X$ over $\Omega'$. 
Let $p = (p_\omega)_{\omega \in \Omega'} \in \mathscr{C}_{\mathrm{m}}^0(X;\Omega')$.
We assume that $p\,\eta_{\Omega'} = \eta_{\Omega'}$ as measures on $(X_{\Omega'}^{\mathrm{an}},\mathcal B_{X,\Omega'})$, that is,
for any bounded measurable function  $u: X_{\Omega'}^{\mathrm{an}}\rightarrow\mathbb R$, one has 
\[\int_{X_{\Omega'}^{\mathrm{an}}}u(x)\,\eta_{\Omega'}(\mathrm{d}x)=\int_{X_{\Omega'}^{\mathrm{an}}}u(x)p(x)\,\eta_{\Omega'}(\mathrm{d}x),\]
where $p(x)=p_\omega(x)$ when $x\in X_\omega^{\mathrm{an}}$. 
Then there exists $\Omega'' \in \mathcal{A}$ such that $\Omega'' \subseteq \Omega'$, $\nu(\Omega' \setminus \Omega'') = 0$ and that
$p_\omega = 1$ holds $\eta_\omega$-almost everywhere on $X_{\omega}^{\mathrm{an}}$ for all $\omega \in \Omega''$.
\end{prop}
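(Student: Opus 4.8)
The plan is to reduce the statement to the assertion that, for $\nu$-almost every $\omega\in\Omega'$, one has $\eta_\omega(\{x\in X_\omega^{\mathrm{an}}:p_\omega(x)\neq 1\})=0$, and then to take $\Omega''$ to be the complement in $\Omega'$ of the exceptional set. First I would record the needed measurability. Since $p\in\mathscr C^0_{\mathrm m}(X;\Omega')$, condition $(3)$ of Definition \ref{def:sigma:algebra:adelic:measure:space} shows that the associated function $p_{\Omega'}$ on $X_{\Omega'}^{\mathrm{an}}$ is $\mathcal B_{X,\Omega'}$-measurable. Hence, for every rational $\varepsilon>0$ and every integer $M\geqslant 1$, the sets
\[B_\varepsilon^{+}:=\{p_{\Omega'}\geqslant 1+\varepsilon\},\qquad B_{\varepsilon,M}^{-}:=\{-M\leqslant p_{\Omega'}\leqslant 1-\varepsilon\}\]
belong to $\mathcal B_{X,\Omega'}$, and by Proposition \ref{Pro: measurability of local interals}$(2)$ the functions $\omega\mapsto\eta_\omega(B_\varepsilon^{+}\cap X_\omega^{\mathrm{an}})$ and $\omega\mapsto\eta_\omega(B_{\varepsilon,M}^{-}\cap X_\omega^{\mathrm{an}})$ are $\mathcal A|_{\Omega'}$-measurable; in particular the set $E:=\{\omega\in\Omega':\eta_\omega(\{p_\omega\neq 1\})>0\}$ lies in $\mathcal A$, and it suffices to prove $\nu(E)=0$. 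Writing $\{p_\omega\neq 1\}=\bigcup_{k\geqslant 1}\big(\{p_\omega\geqslant 1+1/k\}\cup\{p_\omega\leqslant 1-1/k\}\big)$, this reduces to showing, for each rational $\varepsilon>0$, that $\eta_\omega(\{p_\omega\geqslant 1+\varepsilon\})=0$ and $\eta_\omega(\{p_\omega\leqslant 1-\varepsilon\})=0$ for $\nu$-almost every $\omega$.

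The core of the argument is an elementary cancellation, applied only on sets of finite $\eta_{\Omega'}$-mass. If $B\in\mathcal B_{X,\Omega'}$ satisfies $\eta_{\Omega'}(B)<\infty$ and $B\subseteq B_\varepsilon^{+}$, then plugging the bounded measurable function $u=\indic_B$ into the hypothesis $p\,\eta_{\Omega'}=\eta_{\Omega'}$ gives
\[\eta_{\Omega'}(B)=\int_{X_{\Omega'}^{\mathrm{an}}}\indic_B\,p\,\mathrm d\eta_{\Omega'}\geqslant(1+\varepsilon)\,\eta_{\Omega'}(B),\]
because $p\geqslant 1+\varepsilon>0$ on $B$ (the middle term being finite, as it equals $\eta_{\Omega'}(B)$ by hypothesis); since $\eta_{\Omega'}(B)<\infty$ this forces $\eta_{\Omega'}(B)=0$. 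Dually, if $B\in\mathcal B_{X,\Omega'}$ satisfies $\eta_{\Omega'}(B)<\infty$ and $B\subseteq B_{\varepsilon,M}^{-}$, then $\indic_B\,p$ is $\eta_{\Omega'}$-integrable (as $|p|\leqslant M$ on $B$) and the same substitution yields
\[\eta_{\Omega'}(B)=\int_{X_{\Omega'}^{\mathrm{an}}}\indic_B\,p\,\mathrm d\eta_{\Omega'}\leqslant(1-\varepsilon)\,\eta_{\Omega'}(B),\]
again forcing $\eta_{\Omega'}(B)=0$; letting $M\to+\infty$ gives $\eta_{\Omega'}(B')=0$ for every finite-mass $B'\subseteq\{p_{\Omega'}\leqslant 1-\varepsilon\}$ as well.

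To finish I would propagate this to the fibres via the disintegration formula defining $\eta_{\Omega'}$. By Remark \ref{Rem: finitenesss of measure} there is a constant $C>0$ and a $\nu$-null set $N$ with $\eta_\omega(X_\omega^{\mathrm{an}})\leqslant C$ for $\omega\notin N$, so for every measurable $A\subseteq\Omega'$ of finite $\nu$-measure the set $B_\varepsilon^{+}\cap\pi^{-1}(A\setminus N)$ has $\eta_{\Omega'}$-mass $\int_{A\setminus N}\eta_\omega(B_\varepsilon^{+}\cap X_\omega^{\mathrm{an}})\,\nu(\mathrm d\omega)\leqslant C\,\nu(A)<\infty$; by the cancellation above this integral vanishes, whence $\eta_\omega(B_\varepsilon^{+}\cap X_\omega^{\mathrm{an}})=0$ for $\nu$-almost every $\omega\in A$, and likewise on the negative side. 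Exhausting $\Omega'$ up to a $\nu$-null set by such finite-measure slices — using that $\Omega'\setminus\Omega_0$ is $\sigma$-finite (as in the proof of Proposition \ref{Pro: measurability logarithmic}) and treating the trivially valued part $\Omega'\cap\Omega_0$ by the same cancellation — one concludes that $\eta_\omega(\{p_\omega\geqslant 1+\varepsilon\})=\eta_\omega(\{p_\omega\leqslant 1-\varepsilon\})=0$ for $\nu$-almost every $\omega\in\Omega'$; taking the union over the countably many $\varepsilon=1/k$ and over the two signs gives $\nu(E)=0$, and $\Omega'':=\Omega'\setminus E$ works, since $\eta_\omega(\{p_\omega\neq 1\})=0$ is exactly the statement that $p_\omega=1$ holds $\eta_\omega$-almost everywhere. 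The main obstacle I anticipate is precisely this last step: one must be disciplined about applying the cancellation only on sets of finite $\eta_{\Omega'}$-mass and then verify that such sets exhaust $X_{\Omega'}^{\mathrm{an}}$ up to a $\nu$-null set, which is where the essential boundedness of the fibre masses and the $\sigma$-finiteness of $\Omega\setminus\Omega_0$ enter and where the disintegration of $\eta_{\Omega'}$ over $\nu|_{\Omega'}$ has to be used carefully; the algebraic cancellation itself is immediate.
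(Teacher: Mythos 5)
Your proposal is correct and follows essentially the same route as the paper: the paper's proof simply asserts that the hypothesis $p\,\eta_{\Omega'}=\eta_{\Omega'}$ forces $p=1$ to hold $\eta_{\Omega'}$-almost everywhere, then passes to the fibres via the disintegration identity $\int_{X_{\Omega'}^{\mathrm{an}}}|p-1|\,\mathrm d\eta_{\Omega'}=\int_{\Omega'}\nu(\mathrm d\omega)\int_{X_\omega^{\mathrm{an}}}|p_\omega-1|\,\mathrm d\eta_\omega=0$, while you supply the elementary cancellation that underlies that first assertion and carry it out directly on finite-mass slices $\pi^{-1}(A\setminus N)\cap B_\varepsilon^{\pm}$ before collating. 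Your version is more careful than the paper's on one point the paper glosses over entirely, namely that the cancellation $\eta_{\Omega'}(B)\geqslant(1+\varepsilon)\eta_{\Omega'}(B)$ is only informative when $\eta_{\Omega'}(B)<\infty$; you get this finiteness from the essential bound on the fibre masses (Remark \ref{Rem: finitenesss of measure}) together with $\sigma$-finite slicing of $\Omega'$. The one soft spot is the last sentence, where the trivially valued part $\Omega'\cap\Omega_0$ is dispatched by ``the same cancellation'': without some $\sigma$-finiteness of $\Omega'\cap\Omega_0$ modulo $\nu$-null sets (or the observation that the integrability constraints in Definition~\ref{def:Borel:measure:family} force $\eta_\omega$ to vanish outside a $\sigma$-finite subset of $\Omega'$), the passage from ``$\nu(E\cap A)=0$ for every finite-measure $A\subseteq\Omega'$'' to ``$\nu(E)=0$'' is not automatic. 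That said, the paper's own proof implicitly uses the same $\sigma$-finiteness when it deduces $p=1$ $\eta_{\Omega'}$-a.e.\ from the hypothesis, so this is not a defect specific to your argument; it would be worth a sentence making the $\sigma$-finiteness reduction explicit.
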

\begin{proof}
By our assumption, $p(x)=1$ holds $\eta_{\Omega'}$-almost everywhere. In particular, one has 
\[\int_{X^{\mathrm{an}}_{\Omega'}}\big|p(x)-1\big|\,\eta_{\Omega'}(\mathrm{d}x) =\int_{\Omega'} \nu(\mathrm{d}\omega)\int_{X_{\omega}^{\mathrm{an}}}\big|p_\omega(x)-1\big|\,\eta_{\omega}(\mathrm{d}x)=0.\]
Therefore, there exists $\Omega''\subseteq\Omega'$ such that $\nu(\Omega'\setminus\Omega'')=0$ and that the equality 
\[p_\omega(x)=1\]
holds $\eta_{\omega}$-almost everywhere for any $\omega\in
\Omega''$, as required.
\end{proof}


\subsection{Comparison between $\mathcal B_{X,\Omega'}$ and $\mathcal B'_{X,\Omega'}$}.
\subsubsection{Approximation of continuous functions}

Let $X$ be a projective reduced scheme over $\CC$.
Let $L$ be a line bundle on $X$ and $\varphi = \{ |\ndot|_{\varphi}(x) \}_{x \in X(\CC)}$ be a metric of $L$.
We say $\varphi$ is \emph{dominated from above by a continuous metric} if there exist a continuous metric $\psi$ of $L$ and
a function $f : X(\CC) \to \RR_{\geqslant 0}$ such that $\varphi = \psi + \mathrm{e}^{-f}$.
For $s \in H^0(X, L)$, we set $\|s\|_{\varphi} := \sup \{ |s|_{\varphi}(x) \mid x \in X(\CC) \}$.
Note that $\|s\| < \infty$ because $\|s\|_{\varphi} \leqslant \| s\|_{\psi}$.
Moreover, $\|\ndot\|_{\varphi}$ gives rise to a norm on $H^0(X, L)$.
We assume that $L$ is globally generated, that is,
$H^0(X, L) \otimes \OO_X \to L$ is surjective.
Then the quotient metric by $\|\ndot\|_{\varphi}$ is denoted by $\varphi_{\mathrm{FS}}$.
It is easy to see the following (c.f. \cite[the proof of Proposition~2.2.22 and Proposition~2.2.23]{CMArakelovAdelic}):
\begin{enumerate}[label=\rm(\alph*)]
\item $\varphi \leqslant \varphi_{\mathrm{FS}}$.
\item Let $M$ be another globally generated line bundle and $\varphi'$ be a metric dominated from above by a continuous metric.
Then $(\varphi + \varphi')_{\mathrm{FS}} \leqslant \varphi_{\mathrm{FS}} + \varphi'_{\mathrm{FS}}$.
\item If $\varphi$ is a quotient metric, then $\varphi = \varphi_{\mathrm{FS}}$.
\end{enumerate}

Let $L$ be a very ample line bundle on $X$ and $\varphi$ be a quotient metric by a Hermitian norm on $H^0(X, L)$.
Let $U$ be a dense Zariski open set of $X$ and $f : X(\CC) \to \RR_{\geqslant 0}$ be a function such that
$f$ is continuous over $U(\CC)$. Let $\psi_n = (n\varphi + \mathrm{e}^{-f})_{\mathrm{FS}}$ and
$\psi_n = n\varphi + \mathrm{e}^{-f_n}$.

\begin{prop}
The function $f_n$ is continuous on $X(\CC)$, $f_n \leqslant f$ and $f_n \leqslant f_{n+1}$.
\end{prop}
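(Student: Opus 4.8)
\emph{Strategy.} The plan is to obtain the two monotonicity statements as formal consequences of properties (a), (b), (c) above, and to reduce the continuity of $f_n$ — the only point of real substance — to the continuity of the quotient metric by a norm on a \emph{finite-dimensional} vector space. First I would record the preliminaries: since $L$ is very ample, every $L^{\otimes n}$ ($n\geqslant 1$) is globally generated, and the quotient metric $\varphi$ of $L$ by a Hermitian norm on $H^0(X,L)$ is continuous. Because $f\geqslant 0$, one has $|\ndot|_{n\varphi+\mathrm{e}^{-f}}=\mathrm{e}^{-f}\,|\ndot|_{n\varphi}\leqslant|\ndot|_{n\varphi}$, so $n\varphi+\mathrm{e}^{-f}$ is a metric of $L^{\otimes n}$ dominated from above by the continuous metric $n\varphi$; in particular $\|\ndot\|_{n\varphi+\mathrm{e}^{-f}}$ is a genuine norm on the finite-dimensional space $H^0(X,L^{\otimes n})$ (finite because bounded above by $\|\ndot\|_{n\varphi}$, and positive-definite), and by definition $\psi_n=(n\varphi+\mathrm{e}^{-f})_{\mathrm{FS}}$ is the quotient metric of $L^{\otimes n}$ by this norm.

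\emph{The inequalities.} For $f_n\leqslant f$ I would apply property (a) to the metric $n\varphi+\mathrm{e}^{-f}$ of the globally generated $L^{\otimes n}$: this gives $n\varphi+\mathrm{e}^{-f}\leqslant(n\varphi+\mathrm{e}^{-f})_{\mathrm{FS}}=\psi_n=n\varphi+\mathrm{e}^{-f_n}$, i.e. $\mathrm{e}^{-f}\leqslant\mathrm{e}^{-f_n}$ on $X(\CC)$, hence $f_n\leqslant f$. For $f_n\leqslant f_{n+1}$ I would use the identity of metrics of $L^{\otimes(n+1)}$
\[(n+1)\varphi+\mathrm{e}^{-f}=(n\varphi+\mathrm{e}^{-f})+\varphi,\]
with $n\varphi+\mathrm{e}^{-f}$ seen as a metric of $L^{\otimes n}$ and $\varphi$ as a metric of $L$. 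Both are dominated from above by continuous metrics and both $L^{\otimes n}$, $L$ are globally generated, so property (b) gives $\psi_{n+1}=\big((n\varphi+\mathrm{e}^{-f})+\varphi\big)_{\mathrm{FS}}\leqslant(n\varphi+\mathrm{e}^{-f})_{\mathrm{FS}}+\varphi_{\mathrm{FS}}=\psi_n+\varphi$, where the last equality is property (c) ($\varphi$ is a quotient metric, so $\varphi_{\mathrm{FS}}=\varphi$). Rewriting both sides via $f_n$ and $f_{n+1}$ yields $(n+1)\varphi+\mathrm{e}^{-f_{n+1}}\leqslant(n+1)\varphi+\mathrm{e}^{-f_n}$, that is $f_n\leqslant f_{n+1}$.

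\emph{Continuity of $f_n$ — the main obstacle.} It suffices to prove that $\psi_n$ is a continuous metric of $L^{\otimes n}$, for then, $n\varphi$ being continuous and nowhere vanishing, the ratio $\mathrm{e}^{-f_n}=|\ndot|_{\psi_n}/|\ndot|_{n\varphi}$ is a continuous positive function and $f_n$ is continuous on $X(\CC)$. To see that $\psi_n$ is continuous I would argue locally: fix $x_0\in X(\CC)$, a local holomorphic frame $e$ of $L^{\otimes n}$ near $x_0$, and write the evaluation map as $s\mapsto\lambda_x(s)\,e(x)$ for $s\in V:=H^0(X,L^{\otimes n})$, so that $x\mapsto\lambda_x\in V^{\vee}$ is holomorphic and, by global generation, nowhere zero near $x_0$; then $|e(x)|_{\psi_n}=\inf\{\|s\|_{n\varphi+\mathrm{e}^{-f}}:\lambda_x(s)=1\}=1/\|\lambda_x\|_{*}$ (dual norm), which is continuous and positive in $x$ near $x_0$, hence $\psi_n$ is continuous there and thus everywhere. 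The one delicate point — the crux of the proposition — is exactly that, although $f$ is merely continuous on the dense open set $U$, passing to the sup-norm $\|\ndot\|_{n\varphi+\mathrm{e}^{-f}}$ on the finite-dimensional space of global sections still produces an honest norm, and hence, through the globally generated $L^{\otimes n}$, a metric continuous on all of $X(\CC)$: taking global sections regularises the boundary behaviour of $f$. Alternatively one may cite the continuity of quotient metrics by norms directly from \cite[§2.2]{CMArakelovAdelic}.
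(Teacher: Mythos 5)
Your argument is correct and follows essentially the same route as the paper's proof: the inequalities $f_n\leqslant f$ and $f_n\leqslant f_{n+1}$ are deduced from properties (a) and (b)+(c) respectively, exactly as in the paper, and the continuity of $f_n$ is reduced to that of $\psi_n$ and $n\varphi$. The only difference is that the paper takes the continuity of the quotient metric $\psi_n$ as an established fact (it follows from the general theory of Fubini--Study/quotient metrics in \cite{CMArakelovAdelic}), whereas you spell out the standard dual-norm computation $|e(x)|_{\psi_n}=1/\|\lambda_x\|_*$; this is a helpful expansion of the same point rather than a different approach.
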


\begin{proof}
Since $\psi_n$ and $n\varphi$ are continuous on $X(\CC)$, so the first assertion is obvious.
By the above (a), $n \varphi + \exp(-f) \leqslant  \psi_n$, so the second assertion follows.
By using (b) and (c),
\begin{align*}
(n+1)\varphi + \exp(-f_{n+1}) & = ((n+1)\varphi + \exp(-f))_{\mathrm{FS}} \\
& = (\varphi + (n\varphi + \exp(-f)))_{\mathrm{FS}}
\leqslant \varphi + n\varphi + \exp(-f_n),
\end{align*}
as required.
\end{proof}

\begin{theo}\label{theorem:pontwise:limit}
For $x\in U(\CC)$, $\lim_{n\to\infty} f_n(x) = f(x)$.
\end{theo}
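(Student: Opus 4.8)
The preceding proposition shows that $(f_n(x))_{n\geqslant 1}$ is non-decreasing and bounded above by $f(x)$, so it converges and $\lim_{n\to\infty}f_n(x)\leqslant f(x)$. The plan is therefore to prove the reverse inequality $\liminf_{n\to\infty}f_n(x)\geqslant f(x)$: fixing $\varepsilon>0$, it suffices to exhibit, for all large $n$, a single global section of $L^{\otimes n}$ witnessing $f_n(x)\geqslant f(x)-\varepsilon$. The first step is to turn the definition of $f_n$ into an extremal formula. Since $L$, hence $L^{\otimes n}$, is very ample and thus globally generated, $\psi_n=(n\varphi+\mathrm{e}^{-f})_{\mathrm{FS}}$ is the quotient metric on $L^{\otimes n}$ attached to the sup-norm $\|s\|_{n\varphi+\mathrm{e}^{-f}}=\sup_{y\in X(\CC)}|s|_{n\varphi}(y)\,\mathrm{e}^{-f(y)}$ on $H^0(X,L^{\otimes n})$ (finite because $f\geqslant 0$, definite because $f$ is real-valued). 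Unwinding the quotient metric at the point $x$ and comparing with $\psi_n=n\varphi+\mathrm{e}^{-f_n}$ gives
\[
f_n(x)\;=\;\sup_{\substack{s\in H^0(X,L^{\otimes n})\\ s(x)\neq 0}}\ \inf_{y\in X(\CC)}\Big(f(y)-\ln\tfrac{|s|_{n\varphi}(y)}{|s|_{n\varphi}(x)}\Big)
\]
(taking $y=x$ in the inner infimum re-proves $f_n(x)\leqslant f(x)$). So I need, for large $n$, one $s$ with $s(x)\neq 0$ and $\ln\big(|s|_{n\varphi}(y)/|s|_{n\varphi}(x)\big)\leqslant f(y)-f(x)+\varepsilon$ for all $y\in X(\CC)$.

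Such a section will be obtained by ``peaking'' a section of $L$ itself at $x$. Write $E=H^0(X,L)$ with its Hermitian norm. Since $L$ is globally generated, $E_x:=\{s\in E:s(x)=0\}$ is a hyperplane, so I choose an orthonormal basis $e_0,e_1,\dots,e_N$ of $E$ with $e_1,\dots,e_N\in E_x$; then $e_0(x)\neq 0$. For $t:=e_0\in H^0(X,L)$ the quotient-metric formula gives, in any trivialization of $L_y$,
\[
|t|_\varphi(y)^2\;=\;\frac{|e_0(y)|^2}{\sum_{i=0}^N|e_i(y)|^2}\;\leqslant\;1,
\]
with equality exactly when $e_1(y)=\dots=e_N(y)=0$, i.e. when $E_x\subseteq E_y$; as $L$ is very ample it separates points, so this forces $y=x$. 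Hence $|t|_\varphi(x)=1$ and $|t|_\varphi(y)<1$ for every $y\neq x$, and for $s:=t^{n}\in H^0(X,L^{\otimes n})$ one has $s(x)\neq 0$ and $|s|_{n\varphi}(y)/|s|_{n\varphi}(x)=|t|_\varphi(y)^n$.

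The last step is a compactness argument. Since $x\in U(\CC)$, $U(\CC)$ is open in $X(\CC)$ and $f|_{U(\CC)}$ is continuous at $x$, so there is an open neighbourhood $W$ of $x$ with $W\subseteq U(\CC)$ and $|f(y)-f(x)|<\varepsilon$ on $W$. The complement $X(\CC)\setminus W$ is compact and avoids $x$, hence $c:=\sup_{y\in X(\CC)\setminus W}|t|_\varphi(y)<1$; pick $n_0$ with $c^{n_0}\leqslant \mathrm{e}^{-f(x)+\varepsilon}$, which is possible since $c<1$ and $f(x)$ is a fixed real number. Then for $n\geqslant n_0$ and $y\in X(\CC)$: if $y\in W$ then $\ln\big(|s|_{n\varphi}(y)/|s|_{n\varphi}(x)\big)=n\ln|t|_\varphi(y)\leqslant 0<f(y)-f(x)+\varepsilon$; if $y\notin W$ then $n\ln|t|_\varphi(y)\leqslant n\ln c\leqslant -f(x)+\varepsilon\leqslant f(y)-f(x)+\varepsilon$, using $f(y)\geqslant 0$. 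In both regimes the inner infimum attached to $s=t^{n}$ is $\geqslant f(x)-\varepsilon$, so $f_n(x)\geqslant f(x)-\varepsilon$ for all $n\geqslant n_0$. Letting $\varepsilon\to 0$ yields $\liminf_{n}f_n(x)\geqslant f(x)$, and together with the upper bound this gives $\lim_{n\to\infty}f_n(x)=f(x)$.

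I expect the two structural inputs to be the real content: the minimax reformulation of $f_n(x)$ coming from the Fubini--Study/quotient construction, and the production of a global section of $L$ whose quotient norm attains its supremum precisely at the prescribed point $x$ (using very ampleness through the separation of points). Once these are in place the conclusion is routine: compactness of $X(\CC)$, continuity of $f$ at $x$, and a short sign bookkeeping separating $y$ near $x$ (handled by $|t|_\varphi\leqslant 1$ and continuity of $f$) from $y$ away from $x$ (handled by the strict peaking of $t$ together with $f\geqslant 0$).
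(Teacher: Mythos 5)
Your proof is correct and follows essentially the same route as the paper's: both extract from the quotient-metric construction a one-sided bound on $\mathrm{e}^{-f_n(x)}$ in terms of a chosen section (the paper uses the inequality $|s_x^{\otimes n}|_{\psi_n}(x)\leqslant\|s_x^{\otimes n}\|_{n\varphi+\mathrm{e}^{-f}}$ directly, you first recast it as a minimax formula, which is equivalent), both pick the peaking section orthogonal to $V_x=\{s:s(x)=0\}$ whose FS norm attains $1$ only at $x$ by very ampleness, and both finish with compactness of $X(\CC)$ together with continuity of $f$ at $x\in U(\CC)$ and the bound $f\geqslant 0$ away from $x$. The only cosmetic difference is that the paper organizes the compactness step as a finite subcover indexed by points $\xi\in X(\CC)$, while you split directly into a small neighbourhood $W$ of $x$ and the compact complement with a uniform $c<1$; these are the same argument.
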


\begin{proof}
For $x \in X(\mathbb{C})$, let $V_x = \{ s \in H^0(X, L) \mid s(x) = 0 \}$. Then, as $L$ is very ample, 
$V_x$ is a codimension $1$ subspace of $H^0(X, L)$, and
$V_x \not= V_{y}$ for $x \not= y$. We assume that $x \in U(\CC)$.
Let $s_x$ be a non-zero vector in $H^0(X, L)$ which is orthogonal to $V_x$. 
For $y \in X(\mathbb{C}) \setminus \{x\}$, 
\[
|s_x|_\varphi(y) < \|s_x\| = |s_x|_\varphi(x).
\]
because $s_x$ is not orthogonal to $V_y$. Moreover, by definition,
\[
|s_x^{\otimes n}|_{\psi_n}(x) \leqslant \|s_x^{\otimes n} \|_{n\varphi + \mathrm{e}^{-f}} 
= \sup_{y \in X(\mathbb{C})} \mathrm{e}^{-f(y)} (|s_x|_{\varphi}(y))^n,
\]
which leads to
\[
\mathrm{e}^{-f_n(x)} = \frac{|s_x^{\otimes n}|_{\psi_n}(x)}{|s_x^{\otimes n}|_{n \varphi}(x)} \leqslant  \sup_{y \in X(\mathbb{C})} 
\mathrm{e}^{-f(y)} \left(\frac{|s_x|_{\varphi}(y)}{|s_x|_{\varphi}(x)}\right)^n
\]
for all $n \geqslant 1$.
Fix an arbitrary positive number $\varepsilon$.

\begin{clai}
For each $\xi \in X(\mathbb{C})$, we can find an open neighborhood $U_{\xi}$ of $\xi$ and
a positive integer $n_{\xi}$ such that
\[
\mathrm{e}^{-f(y)}  \left(\frac{|s_{x}|_{\varphi}(y)}{|s_{x}|_{\varphi}(x)}\right)^{n_\xi} \leqslant  \mathrm{e}^{-f(x)+\varepsilon}
\]
for all $y \in U_\xi$. 
\end{clai}

\begin{proof}
First we assume that $\xi = x$. Then the assertion is obvious.
Next we assume that  $\xi \not= x$.
Then we can find $n_{\xi} \in \ZZ_{\geqslant 1}$ such that
\[
\left(\frac{|s_{x}|_{\varphi}(\xi)}{|s_{x}|_{\varphi}(x)}\right)^{n_\xi} < \mathrm{e}^{-f(x)},
\]
so we can find an open neighborhood $U_{\xi}$ of $\xi$ such that
\[
\left(\frac{|s_{x}|_{\varphi}(y)}{|s_{x}|_{\varphi}(x)}\right)^{n_\xi} < \mathrm{e}^{-f(x)}
\]
for all $y \in U_{\xi}$. Therefore, for $y \in U_{\xi}$,
\[
 \mathrm{e}^{-f(y)} \left(\frac{|s_{x}|_{\varphi}(y)}{|s_{x}|_{\varphi}(x)}\right)^{n_\xi} \leqslant
 \left(\frac{|s_{x}|_{\varphi}(y)}{|s_{x}|_{\varphi}(x)}\right)^{n_\xi} < \mathrm{e}^{-f(x)} < \mathrm{e}^{-f(x)+\varepsilon},
\]
as required.
\end{proof}

Since $X(\mathbb{C})$ is compact and $X(\mathbb{C}) = \bigcup_{\xi \in X(\mathbb{C})} U_\xi$,
there are $\xi_1, \ldots, \xi_N$ such that $X(\mathbb{C}) = U_{\xi_1} \cup \cdots \cup U_{\xi_N}$.
Thus, for all $n \geqslant \max \{ n_{\xi_1}, \ldots, n_{\xi_N} \}$, one has
\[
 \mathrm{e}^{-f_n(x)} \leqslant \sup_{y \in X(\mathbb{C})} \mathrm{e}^{-f(y)}  \left(\frac{|s_{x}|_{\varphi}(y)}{|s_{x}|_{\varphi}(x)}\right)^{n} \leqslant  \mathrm{e}^{-f(x) + \varepsilon},
\]
and hence
\[
\lim_{n\to\infty} \mathrm{e}^{-f_n(x)} \leqslant  \mathrm{e}^{-f(x) + \varepsilon}.
\]
Thus one has $\lim_{n\to\infty} f_n(x) \geqslant f(x)$ because $\varepsilon$ is arbitrary.
Therefore, 
one obtains the assertion.
\end{proof}

\subsubsection{Measurability on $\Omega_{\infty}$}
Let $((\Omega, \mathcal{A}, \nu), \phi)$ be an adelic structure of a countable filed $K$.
Let $X$ be a geometrically integral projective variety over $K$ and $\Omega'$ be an element of $\mathcal A$.

\begin{defi}
We equip $X_{\Omega'}^{\mathrm{an}}$ the smallest $\sigma$-algebra $\mathcal B'_{X,\Omega'}$ which satisfies the following conditions:
\begin{enumerate}[label=\rm(\arabic*)]
\item the map $\pi:X_{\Omega'}^{\mathrm{an}}\rightarrow \Omega'$ is measurable,
\item for any Zariski open subset $U$ of $X$, the set $U_{\Omega'}^{\mathrm{an}}$ belongs to $\mathcal B'_{X,\Omega'}$,
\item for any Zariski open subset $U$ of $X$ and any regular function $b$ on $U$, the function $|b|_{\Omega'}$ on $U_{\Omega'}^{\mathrm{an}}$ defined as
\[\forall\,\omega\in\Omega',\;\forall\,x\in U_\omega^{\mathrm{an}},\quad |b|_{\Omega'}(x):=|b|_\omega(x)\]
is $\mathcal B'_{X,\Omega'}$-measurable.
\end{enumerate} 
Obviously $\mathcal B'_{X,\Omega'} \subseteq \mathcal B_{X,\Omega'}$.
\end{defi}

\begin{prop}\label{Pro: measurability of green function 02}
Let $L$ be a very ample invertible $\mathcal{O}_X$-module and $\varphi$ be a family of semipositive metrics of $L$ such that
$(L, \varphi)$ is measurable.
Then, for $s \in H^0(X, L)$,  the function $|s|_{\varphi, \Omega'}:X_{\Omega'}^{\mathrm{an}}\rightarrow\mathbb R$ sending $x\in X_\omega^{\mathrm{an}}$ to $|s|_{\varphi_\omega}(x)$ is  $\mathcal B'_{X,\Omega'}$-measurable.
\end{prop}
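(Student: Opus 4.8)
The plan is to avoid any Fubini--Study regularization and instead to write $|s|_{\varphi,\Omega'}$ explicitly as a \emph{countable} infimum of functions which are manifestly $\mathcal B'_{X,\Omega'}$-measurable. The two structural facts I would exploit are that here $K$ is algebraically closed and countable, so that the set $X(K)$ of $K$-rational points is countable, is the \emph{same} subset of every fiber $X_\omega^{\mathrm{an}}$, and is \emph{analytically dense} in each $X_\omega^{\mathrm{an}}$ (density holds because $\iota_\omega(K)\supseteq\overline{\mathbb Q}$ is dense in $\mathbb C$, together with the implicit function theorem on the smooth locus of $X_\omega$).

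Concretely, I would fix a finite affine open cover $(U_\lambda)_\lambda$ of $X$ and, for each $\lambda$, regular functions $b_{\lambda,1},\dots,b_{\lambda,m_\lambda}$ generating $\mathcal O_X(U_\lambda)$ over $K$, so that $(b_{\lambda,i})_i$ realizes a closed embedding $U_{\lambda,\omega}^{\mathrm{an}}\hookrightarrow\mathbb C^{m_\lambda}$. For $P\in X(K)\cap U_\lambda$ and $\varepsilon\in\mathbb Q_{>0}$ set
\[\mathcal T(P,\lambda,\varepsilon):=\bigl\{x\in U_{\lambda,\Omega'}^{\mathrm{an}}\ :\ |b_{\lambda,i}-b_{\lambda,i}(P)|_{\Omega'}(x)<\varepsilon\ \text{ for all }i\bigr\}.\]
Since $b_{\lambda,i}(P)\in K$, each $b_{\lambda,i}-b_{\lambda,i}(P)$ is regular on $U_\lambda$, so $\mathcal T(P,\lambda,\varepsilon)\in\mathcal B'_{X,\Omega'}$ by conditions~(2) and~(3) in the definition of $\mathcal B'_{X,\Omega'}$; and by the density statement above, for each $\omega\in\Omega'$ the fibers $\mathcal T(P,\lambda,\varepsilon)_\omega$ form a basis of the topology of $X_\omega^{\mathrm{an}}$. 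The crucial point I must establish is that for fixed $(P,\lambda,\varepsilon)$ the function
\[g_{P,\lambda,\varepsilon}\colon\Omega'\to[0,+\infty),\qquad \omega\longmapsto\sup\bigl\{|s|_{\varphi_\omega}(x')\ :\ x'\in\mathcal T(P,\lambda,\varepsilon)_\omega\bigr\}\]
is $\mathcal A|_{\Omega'}$-measurable. For this I would use that $\mathcal T(P,\lambda,\varepsilon)_\omega$ is open, that $X(K)$ is dense in it, and that $|s|_{\varphi_\omega}$ is continuous on $X_\omega^{\mathrm{an}}$, to rewrite this supremum as $\sup_{Q\in X(K)\cap U_\lambda}\indic_{E_Q}(\omega)\,|s|_{\varphi_\omega}(Q_\omega)$, where $E_Q=\bigcap_i\{\omega:|b_{\lambda,i}(Q)-b_{\lambda,i}(P)|_\omega<\varepsilon\}$ lies in $\mathcal A$ (because $b_{\lambda,i}(Q)-b_{\lambda,i}(P)\in K$) and $\omega\mapsto|s|_{\varphi_\omega}(Q_\omega)$ is $\mathcal A$-measurable by the defining measurability of the adelic metric family $(L,\varphi)$ at the closed point $Q$ (condition~(a) of \cite[Definition~6.1.27]{CMArakelovAdelic}, with $K(Q)=K$). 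As $X(K)$ is countable, $g_{P,\lambda,\varepsilon}$ is then a countable supremum of $\mathcal A|_{\Omega'}$-measurable functions, hence measurable.

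Granting this, the proposition follows quickly: continuity of $|s|_{\varphi_\omega}$ and the fact that the sets $\mathcal T(P,\lambda,\varepsilon)_\omega$ form a neighbourhood basis give, for every $x\in X_{\Omega'}^{\mathrm{an}}$,
\[|s|_{\varphi,\Omega'}(x)=\inf\bigl\{g_{P,\lambda,\varepsilon}(\pi(x))\ :\ (P,\lambda,\varepsilon)\text{ with }x\in\mathcal T(P,\lambda,\varepsilon)\bigr\},\]
that is, $|s|_{\varphi,\Omega'}=\inf_{(P,\lambda,\varepsilon)}F_{P,\lambda,\varepsilon}$, where $F_{P,\lambda,\varepsilon}$ equals $g_{P,\lambda,\varepsilon}\circ\pi$ on $\mathcal T(P,\lambda,\varepsilon)$ and $+\infty$ off it. Each $F_{P,\lambda,\varepsilon}$ is $\mathcal B'_{X,\Omega'}$-measurable (using $\mathcal T(P,\lambda,\varepsilon)\in\mathcal B'_{X,\Omega'}$, the $\mathcal B'_{X,\Omega'}$-measurability of $\pi$, and the $\mathcal A|_{\Omega'}$-measurability of $g_{P,\lambda,\varepsilon}$), and there are only countably many of them, so the infimum $|s|_{\varphi,\Omega'}$ is $\mathcal B'_{X,\Omega'}$-measurable.

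The step I expect to be the main obstacle is the reduction of the supremum of $|s|_{\varphi_\omega}$ over the \emph{analytic} tube $\mathcal T(P,\lambda,\varepsilon)_\omega$ to a countable supremum over the $K$-rational points it contains: this is exactly where the analytic density of $X(K)$ in each $X_\omega^{\mathrm{an}}$ (hence the assumptions that $K$ is algebraically closed and that the places of $\Omega'$ are Archimedean) and the closed-embedding property of affine coordinates are used, and it is what lets the entire statement be reduced to the defining measurability of $(L,\varphi)$ at closed points. In this route the semipositivity of $\varphi$ plays no role beyond ensuring that $(L,\varphi)$ is a genuine adelic line bundle, and the Fubini--Study approximation of Theorem~\ref{theorem:pontwise:limit} is not needed; a more analytic alternative would approximate $\varphi$ by the ``diagonal'' Fubini--Study metrics attached to a fixed $K$-basis of $H^0(X,L^{\otimes n})$, whose Green functions are visibly $\mathcal B'_{X,\Omega'}$-measurable (being rational expressions in absolute values of regular functions), the difficulty there being a uniform-in-$\omega$ Bernstein--Markov-type estimate guaranteeing that these metrics still converge to $\varphi$.
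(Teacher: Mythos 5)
Your approach has a genuine gap in scope: you work under the assumptions that $K$ is algebraically closed and that $\Omega'$ consists of Archimedean places, and neither is a standing hypothesis here. In Section~3 the field $K$ is only assumed to be countable and perfect, and the Proposition concerns an arbitrary $\Omega'\in\mathcal A$; in particular $X(K)$ can be empty, and $\Omega'$ may consist entirely of non-Archimedean or trivial places, so that the analytic density of $X(K)$ in each $X_\omega^{\mathrm{an}}$ (and, with it, the implicit function theorem over $\mathbb C$) on which your tube argument rests is simply not available. What you prove is therefore a special case of the Proposition---essentially the hypotheses of Section~5---rather than the full statement.

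The paper's proof is uniform in the place and in the field: it first treats the case where $\varphi$ is a quotient metric family induced by a measurable norm family $\xi$ on $E=H^0(X,L)$, writing on the non-vanishing locus $U$ of $s$
\[
|s|_{\varphi,\Omega'}(x)=\inf_{\substack{t\in E\setminus\{0\},\ \lambda\in K(X)^{\times}\\ t=\lambda s}}|\lambda|^{-1}_{\Omega'}(x)\,\|t\|_\omega ,
\]
a \emph{countable} infimum because $E$ is finite-dimensional over the countable field $K$, each term being visibly $\mathcal B'_{X,\Omega'}$-measurable (the factor $|\lambda|_{\Omega'}^{-1}$ by the definition of $\mathcal B'_{X,\Omega'}$ since $\lambda$ is regular on $U$, the factor $\|t\|_\omega$ by measurability of $\xi$); it then passes to a general semipositive measurable $\varphi$ by approximating it, fiberwise uniformly, by quotient (Fubini--Study) metric families. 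That second step is precisely where semipositivity enters, so your remark that semipositivity plays no role in your route is a by-product of the extra hypotheses you impose rather than a genuine strengthening of the statement. To repair the draft, either add the algebraically-closed and Archimedean hypotheses explicitly and present the result as a special case, or replace the density-of-rational-points step with the quotient-metric reduction.
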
 
\begin{proof}
We begin with the case where $\varphi$ is a quotient metric family. Let $E=H^0(X,L)$, $\xi$ be a norm family on $E$ such that $(E,\xi)$ forms a measurable vector bundle on $S$, and  $u:X\rightarrow\mathbb P(E)$ be the canonical projective $K$-morphism. Note that $u^*(\mathcal O_E(1))\cong L$. Assume that the metric family $\varphi$ is induced by $\xi$ and the morphism $u$. 
If $s = 0$, then the assertion is obvious, so we may assume that $s \not= 0$.
 Let $U$ be
a Zariski open set of $X$ given by $\{ \xi \in X \mid s(\xi) \not= 0 \}$.
For $t \in E \setminus \{ 0 \}$, we choose $\lambda \in K(X)^{\times}$ such that $t = \lambda s$.
Note that $\lambda$ is regular on $U$.
Let us consider $|\lambda|_{\Omega'}^{-1}(x)\cdot\|t\|_\omega$ on $U_{\Omega}^{\mathrm{an}}$.
For a point $x$ with $|\lambda|_{\Omega'}(x)=0$ (i.e. $t(x) = 0$), the value of $|\lambda|_{\Omega'}^{-1}(x)\cdot\|t\|_\omega$ is
defined to be $\infty$. Thus,
\[
\inf\limits_{\begin{subarray}{c}
t\in E\setminus\{0\},\;\lambda\in K(X)^{\times} \\
t=\lambda s
\end{subarray}}|\lambda|_{\Omega'}^{-1}(x)\cdot\|t\|_\omega = \inf\limits_{\begin{subarray}{c}
t\in E\setminus\{0\},\;\lambda\in K(X)^{\times} \\
t(x) \not= 0,\;t=\lambda s
\end{subarray}}|\lambda|_{\Omega'}^{-1}(x)\cdot\|t\|_\omega,
\]
and hence
\[|s|_{\varphi, \Omega'}(x)=\inf\limits_{\begin{subarray}{c}
t\in E\setminus\{0\},\;\lambda\in K(X)^{\times} \\
t=\lambda s
\end{subarray}}|\lambda|_{\Omega'}^{-1}(x)\cdot\|t\|_\omega\]
for all $\omega\in\Omega$ and $x\in U_\omega^{\mathrm{an}}$.
Therefore $|s|_{\varphi, \Omega'}$ is  $\mathcal B'_{X,\Omega'}$-measurable because
$|\lambda|_{\Omega'}^{-1}(x)\cdot\|t\|_\omega$ is $\mathcal B'_{X,\Omega'}$-measurable
and $E$ is countable.

Let $(\varphi_n)_{n\in\mathbb N} $ is a family of metric families on $L$. We assume that $(L,\varphi_n)$ forms a measurable line bundle on $X$, and that, for any $\omega\in\Omega'$, 
\[\lim_{n\rightarrow+\infty}d_\omega(\varphi_n,\varphi)=0.\]
If for any $n\in\mathbb N$, the measurable line bundle $(L,\varphi_n)$ verifies the assertion of the proposition, then so does $(L,\varphi)$. 
Therefore the proposition follows.
\end{proof}

\begin{coro}\label{Cor: measurability of green function}
Let $f \in \mathscr{C}^0_{\mathrm{m}}(X)$.
Let $L$ be an ample invertible $\mathcal{O}_X$-module and $\varphi$ be a family of semipositive metrics of $L$ such that
$(L, \varphi)$ is measurable.
If $\varphi + \mathrm{e}^{-f}$ is semipositive, then $f_{\Omega'}$ is  $\mathcal B'_{X,\Omega'}$-measurable.
\end{coro}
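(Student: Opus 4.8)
The plan is to reduce to the very ample case covered by Proposition~\ref{Pro: measurability of green function 02} and then to recover $f$ as a difference of two Green functions. First I would choose an integer $m\geqslant 1$ with $L^{\otimes m}$ very ample and replace the datum $(L,\varphi,f)$ by $(L^{\otimes m},m\varphi,mf)$. This is harmless: $f_{\Omega'}$ is $\mathcal B'_{X,\Omega'}$-measurable if and only if $(mf)_{\Omega'}=mf_{\Omega'}$ is, and all hypotheses persist, since $mf\in\mathscr C^0_{\mathrm{m}}(X)$, $(L^{\otimes m},m\varphi)$ is measurable, and $m\varphi+\mathrm e^{-mf}=(\varphi+\mathrm e^{-f})^{\otimes m}$ is semipositive because a tensor power of a semipositive metric family is semipositive. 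Hence I may assume $L$ very ample.

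Next I would observe that $(L,\varphi+\mathrm e^{-f})$ is itself a measurable family of semipositive metrics: semipositivity is exactly the hypothesis, while measurability follows from that of $(L,\varphi)$ together with $f\in\mathscr C^0_{\mathrm{m}}(X)$, i.e. the measurability of $(\mathcal O_X,\mathrm e^{-f})$ by Proposition~\ref{prop:dominated:criterion}, and the stability of measurability under tensor product. Applying Proposition~\ref{Pro: measurability of green function 02} to both $(L,\varphi)$ and $(L,\varphi+\mathrm e^{-f})$ then yields, for every $s\in H^0(X,L)$, that the functions $|s|_{\varphi,\Omega'}$ and $|s|_{\varphi+\mathrm e^{-f},\Omega'}$ on $X_{\Omega'}^{\mathrm{an}}$ are $\mathcal B'_{X,\Omega'}$-measurable.

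The key point is the elementary identity: by definition of the tensor metric family and of $\mathrm e^{-f}$, for $\omega\in\Omega'$, $x\in X_\omega^{\mathrm{an}}$ and $s\in H^0(X,L)$ with $s(x)\neq 0$ one has $|s|_{\varphi+\mathrm e^{-f},\Omega'}(x)=|s|_{\varphi,\Omega'}(x)\,\mathrm e^{-f_\omega(x)}$, whence
\[
f_{\Omega'}(x)=\ln|s|_{\varphi,\Omega'}(x)-\ln|s|_{\varphi+\mathrm e^{-f},\Omega'}(x).
\]
Since $L$ is globally generated, I would pick finitely many sections $s_1,\dots,s_N\in H^0(X,L)$ generating $L$ everywhere and set $U_i=X\setminus\{s_i=0\}$; then $X_{\Omega'}^{\mathrm{an}}=\bigcup_{i=1}^N U_{i,\Omega'}^{\mathrm{an}}$ with each $U_{i,\Omega'}^{\mathrm{an}}\in\mathcal B'_{X,\Omega'}$ by construction of $\mathcal B'_{X,\Omega'}$. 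On $U_{i,\Omega'}^{\mathrm{an}}$ both $|s_i|_{\varphi,\Omega'}$ and $|s_i|_{\varphi+\mathrm e^{-f},\Omega'}$ are strictly positive and $\mathcal B'_{X,\Omega'}$-measurable, so the displayed identity exhibits $f_{\Omega'}|_{U_{i,\Omega'}^{\mathrm{an}}}$ as a $\mathcal B'_{X,\Omega'}|_{U_{i,\Omega'}^{\mathrm{an}}}$-measurable function; gluing over the finite cover finishes the proof.

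I expect the only genuinely delicate bookkeeping to lie in the first two steps — checking that the passage to a very ample power preserves measurability of $f$ and semipositivity of $\varphi+\mathrm e^{-f}$, and that $(L,\varphi+\mathrm e^{-f})$ is measurable so that Proposition~\ref{Pro: measurability of green function 02} genuinely applies to it. Once the identity $f_{\Omega'}=\ln|s|_{\varphi,\Omega'}-\ln|s|_{\varphi+\mathrm e^{-f},\Omega'}$ is in hand, everything else is purely formal.
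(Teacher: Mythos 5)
Your proof is correct and takes essentially the same route as the paper: reduce to $L$ very ample, apply Proposition~\ref{Pro: measurability of green function 02} to both $(L,\varphi)$ and $(L,\varphi+\mathrm e^{-f})$, and recover $f_{\Omega'}$ as $\ln|s|_{\varphi,\Omega'}-\ln|s|_{\varphi+\mathrm e^{-f},\Omega'}$. The one place where you are more careful than the paper's (very terse) proof is at the zero locus of $s$: the paper fixes a single non-zero section $s$ and writes $f_{\Omega}=\ln\bigl(|s|_{\varphi,\Omega}/|s|_{\varphi+\mathrm e^{-f},\Omega}\bigr)$, which only determines $f_{\Omega'}$ on the open locus $\{s\neq 0\}_{\Omega'}^{\mathrm{an}}$; your finite cover $X=\bigcup_i U_i$ by non-vanishing loci of generating sections, together with $U_{i,\Omega'}^{\mathrm{an}}\in\mathcal B'_{X,\Omega'}$, cleanly closes that gap, and everything else (the reduction to $L^{\otimes m}$ very ample, measurability and semipositivity of $\varphi+\mathrm e^{-f}$) is correctly checked.
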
 

\begin{proof}
We may assume that $L$ is very ample.
Let $s$ be a non-zero global section of $L$. Then, by Proposition~\ref{Pro: measurability of green function 02},
$|s|_{\varphi,\Omega}$ and $|s|_{\varphi + \mathrm{e}^{-f},\Omega}$ are $\mathcal B'_{X,\Omega'}$-measurable, and hence
$f_{\Omega'}$ is $\mathcal B'_{X,\Omega'}$-measurable because 
\[
f_{\Omega} = \ln \frac{|s|_{\varphi,\Omega}}{|s|_{\varphi + \mathrm{e}^{-f},\Omega}}.
\]
\end{proof}

\begin{theo}
We assume that $X^2+1$ is reducible in $K$.
Then $\mathcal B'_{X,\Omega_{\infty}} = \mathcal B_{X,\Omega_\infty}$. 
\end{theo}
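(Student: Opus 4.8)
The plan is to reduce the assertion, using the a priori inclusion $\mathcal B'_{X,\Omega_\infty}\subseteq\mathcal B_{X,\Omega_\infty}$, to a measurability property of continuous functions on Zariski open sets, and then to obtain it by the Fubini--Study approximation of \S3.2.1 combined with Proposition~\ref{Pro: measurability of green function 02}. To get the reverse inclusion it suffices to check that $\mathcal B'_{X,\Omega_\infty}$ satisfies the three defining properties of $\mathcal B_{X,\Omega_\infty}$; properties~(1) and~(2) hold by construction of $\mathcal B'$, and by the remark following Definition~\ref{def:sigma:algebra:adelic:measure:space} property~(3) may be taken in its reformulated shape, so the task becomes: for every non-empty Zariski open $U\subseteq X$, every measurable metric family $\varphi=\mathrm{e}^{-g}$ on $\mathcal O_U$ (equivalently $g\in\mathscr{C}^0_{\mathrm{m}}(U)$) and every regular function $b$ on $U$, the function $|b|_{\varphi,\Omega_\infty}=|b|_{\Omega_\infty}\,\mathrm{e}^{-g_{\Omega_\infty}}$ on $U^{\mathrm{an}}_{\Omega_\infty}$ is $\mathcal B'_{X,\Omega_\infty}$-measurable. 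Since $|b|_{\Omega_\infty}$ is already a generator of $\mathcal B'_{X,\Omega_\infty}$ and $U$ is dense ($X$ being integral), the whole theorem reduces to the claim that \emph{$g_{\Omega_\infty}$ is $\mathcal B'_{X,\Omega_\infty}$-measurable for every dense Zariski open $U\subseteq X$ and every $g\in\mathscr{C}^0_{\mathrm{m}}(U)$}; replacing $g$ by $\max(g,0)$ and $\max(-g,0)$ (which again lie in $\mathscr{C}^0_{\mathrm{m}}(U)$, the measurability conditions being stable under taking a maximum with $0$), we may further assume $g\geqslant0$.

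For the approximation I would use the running hypothesis that $-1$ is a square in $K$ to fix a measurable family $(\iota_\omega)_{\omega\in\Omega_\infty}$ of embeddings $K\hookrightarrow\CC$ (as in \cite[Lemma~4.2.5]{CMIntersection}), a very ample invertible $\mathcal O_X$-module $L$, and a measurable Hermitian quotient metric family $\varphi_0$ on $L$ (for instance the quotient of the constant Hermitian norm family on $H^0(X,L)$ for which a fixed $K$-basis is orthonormal: this is measurable and semipositive, and at each place $\omega$ it is a quotient metric by a Hermitian norm on $H^0(X_\omega,L_\omega)$). Extend $g$ by $0$ to a family $\bar g=(\bar g_\omega)_{\omega\in\Omega_\infty}$ with $\bar g_\omega\colon X^{\mathrm{an}}_\omega\to\RR_{\geqslant0}$ continuous on $U^{\mathrm{an}}_\omega$. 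For $n\geqslant1$ let $\psi_n=(n\varphi_0+\mathrm{e}^{-\bar g})_{\mathrm{FS}}$ be the quotient metric family on $L^{\otimes n}$ attached to the norm family $\xi_n$ on $H^0(X,L^{\otimes n})$ given at $\omega$ by $s\mapsto\sup_{x\in X^{\mathrm{an}}_\omega}|s|_{n\varphi_{0,\omega}}(x)\,\mathrm{e}^{-\bar g_\omega(x)}$, and write $\psi_n=n\varphi_0+\mathrm{e}^{-g_n}$ with $g_n=(g_{n,\omega})$ a family of continuous functions on the $X^{\mathrm{an}}_\omega$. Applying Theorem~\ref{theorem:pontwise:limit} place by place (to $X_\omega$, $L_\omega$, $\varphi_{0,\omega}$, the dense open $U_\omega$ and the function $\bar g_\omega$) yields $g_{n,\omega}(x)\to g_\omega(x)$ for every $x\in U^{\mathrm{an}}_\omega$, so $g_{\Omega_\infty}=\lim_{n\to\infty} g_{n,\Omega_\infty}$ pointwise on $U^{\mathrm{an}}_{\Omega_\infty}$, and it suffices to show each $g_{n,\Omega_\infty}$ is $\mathcal B'_{X,\Omega_\infty}$-measurable.

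To this end I would first establish that, for fixed $n$, $\xi_n$ is a measurable norm family on $H^0(X,L^{\otimes n})$: for each section $s$, using the decomposition $X^{\mathrm{an}}_\omega=U^{\mathrm{an}}_\omega\sqcup((X\setminus U)_{\mathrm{red}})^{\mathrm{an}}_\omega$ and the density of the (countably many) $K$-points $x_{P,\omega}$ of $X$ in the fibres — on $U^{\mathrm{an}}_\omega$ the function $x\mapsto|s|_{n\varphi_{0,\omega}}(x)\mathrm{e}^{-\bar g_\omega(x)}$ is continuous, and on $((X\setminus U)_{\mathrm{red}})^{\mathrm{an}}_\omega$ it equals the continuous function $|s|_{n\varphi_{0,\omega}}$ — one finds that $\omega\mapsto\|s\|_{\xi_{n,\omega}}$ is the countable supremum of the functions $\omega\mapsto|s|_{n\varphi_{0,\omega}}(x_{P,\omega})\mathrm{e}^{-\bar g_\omega(x_{P,\omega})}$, each measurable by the measurability of $\varphi_0$ and by condition~(a) for $g\in\mathscr{C}^0_{\mathrm{m}}(U)$. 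Hence $\psi_n$, being the quotient metric family of $(H^0(X,L^{\otimes n}),\xi_n)$, is a measurable metric family; it is moreover semipositive, since at each place the norm defining $\xi_n$ is a supremum of Hermitian seminorms and so induces a semipositive Fubini--Study metric on the very ample $L^{\otimes n}_\omega$ (cf.\ \cite[\S2.2]{CMArakelovAdelic}), and likewise $n\varphi_0$ is semipositive and measurable. Choosing finitely many sections $s_1,\dots,s_m$ of $L^{\otimes n}$ whose non-vanishing loci cover $X$ (possible since $L^{\otimes n}$ is globally generated), one has on $(X_{s_i})^{\mathrm{an}}_{\Omega_\infty}$
\[g_{n,\Omega_\infty}=\ln|s_i|_{n\varphi_0,\Omega_\infty}-\ln|s_i|_{\psi_n,\Omega_\infty},\]
and both terms on the right are $\mathcal B'_{X,\Omega_\infty}$-measurable by Proposition~\ref{Pro: measurability of green function 02} applied to the semipositive measurable metric families $n\varphi_0$ and $\psi_n$ on the very ample $L^{\otimes n}$. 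Gluing over this finite cover shows $g_{n,\Omega_\infty}$ is $\mathcal B'_{X,\Omega_\infty}$-measurable, and letting $n\to\infty$ concludes.

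The main obstacle is precisely the middle step: verifying that the Fubini--Study approximants $\psi_n$ are \emph{measurable} metric families, equivalently the measurability in $\omega$ of the sup-norm family $\xi_n$. The point is that the extension $\bar g$ is discontinuous along $X\setminus U$ and $U^{\mathrm{an}}_\omega$ is non-compact, so one cannot invoke the continuous theory directly; the resolution relies on splitting the defining supremum according to $X^{\mathrm{an}}_\omega=U^{\mathrm{an}}_\omega\sqcup((X\setminus U)_{\mathrm{red}})^{\mathrm{an}}_\omega$ and computing it along the countably many measurable sections $\omega\mapsto x_{P,\omega}$, using the density of $K$-points in the fibres (this is where the countability of $K$ and the Archimedean nature of $\Omega_\infty$ come in). Once this is secured, everything else is a routine combination of Theorem~\ref{theorem:pontwise:limit}, Proposition~\ref{Pro: measurability of green function 02}, and the inclusion $\mathcal B'_{X,\Omega_\infty}\subseteq\mathcal B_{X,\Omega_\infty}$.
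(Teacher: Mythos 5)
Your proposal is correct and follows essentially the same route as the paper: reduce to a non-negative $g\in\mathscr C^0_{\mathrm m}(U)$, extend by zero, use the Fubini--Study approximants of Theorem~\ref{theorem:pontwise:limit}, establish $\mathcal B'$-measurability of each approximant via Proposition~\ref{Pro: measurability of green function 02} (the paper cites Corollary~\ref{Cor: measurability of green function}, whose proof reduces to that proposition), and pass to the pointwise limit. The one place you go further is in carefully verifying that the sup-norm family $\xi_n$, and hence the quotient metric family $\psi_n$, is measurable by splitting $X^{\mathrm{an}}_\omega$ as $U^{\mathrm{an}}_\omega\sqcup(X\setminus U)^{\mathrm{an}}_\omega$ and taking a countable supremum over algebraic points; the paper leaves this step implicit, so this is a genuine and useful addition (just replace the phrase ``$K$-points'' by ``points $x_{P,v}$ over the closed points $P$ of $X$'', which is what the countability really rests on).
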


\begin{proof}
Let $U$ be a non-empty Zariski open set of $X$ and $(f_{\omega})_{\omega \in \Omega_\infty}$
be a measurable family of continuous functions over $U$. 
It is sufficient to show that $f_{\Omega_\infty}$ is $\mathcal B'_{X,\Omega_{\infty}}$-measurable.
If we set $f_{+,\omega} = \max\{ f_{\omega}, 0\}$ and $f_{-,\omega} = \max\{ -f_{\omega}, 0 \}$,
then $f_{\omega} = f_{+, \omega} - f_{-,\omega}$, and $f_+ = \{ f_{+,\omega} \}_{\omega \in \Omega_\infty}$ and
$f_- = \{ f_{-,\omega} \}_{\omega \in \Omega_\infty}$ are measurable. Thus we may assume that $f \geqslant 0$.
Therefore the assertion follows from Theorem~\ref{theorem:pontwise:limit} and Corollary~\ref{Cor: measurability of green function}.
\end{proof}

\section{Equidistribution theorem}

Throughout this section, we assume that $S$ is proper.

\begin{defi}
Let $X$ be a reduced projective scheme over $\Spec K$ and $\overline L$ be a relatively nef adelic line bundle on $X$. For any integral closed subscheme $Y$ of $X$ such that $L|_Y$ is big, we define the \emph{normalized height} of $Y$ with respect to $\overline L$ as 
\[h_{\overline L}(Y):= \frac{(\overline L|_Y^{\dim(Y)+1})_S}{(\dim(Y)+1) (L|_Y^{\dim(Y)})}.\]
\end{defi}

\begin{theo}\label{theorem:equidistribution}
Let $X$ be an integral projective scheme over $\Spec K $ and $\overline L$ be an adelic line bundle on $X$. We assume that $L$ is big and semi-ample,
and that  $\varphi$ is semi-positive. Let $(Y_n)_{n\in\mathbb N}$ be a sequence of integral closed subschemes of $X$. Assume that,
\begin{enumerate}[label=\rm(\arabic*)]
\item the sequence $(Y_n)_{n\in\mathbb N}$ is generic, namely, for any strict closed subscheme $Z$ of $X$, the set $\{n\in\mathbb N\,:\,Y_n\subseteq Z\}$ is finite,
\item for any $n\in\mathbb N$, $L|_{Y_n}$ is big,
\item the sequence $(Y_n)_{n\in\mathbb N}$ is small, namely the sequence $(h_{\overline L}(Y_n))_{n\in\mathbb N}$ converges to $h_{\overline L}(X)$,
\end{enumerate}
Let $\Omega'$ be a measurable subset of $\Omega$ \textup{(}i.e. $\Omega' \in \mathcal A$\textup{)}. 
Then the sequence $(\delta_{\overline L,Y_n,\Omega'})_{n\in\mathbb N}$ converges weakly to $\delta_{\overline L,X,\Omega'}$. In other words, for any $f\in\mathscr C^0_{\mathrm{a}}(X)$, one has
\begin{equation}\label{Equ: equi distribution equality}\lim_{n\rightarrow+\infty}\delta_{\overline L,Y_n,\Omega'}(f)=\delta_{\overline L,X,\Omega'}(f).\end{equation}
\end{theo}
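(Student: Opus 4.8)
Since $\delta_{\overline L,Y,\Omega'}(f)$ depends only on the restricted family $(f_\omega)_{\omega\in\Omega'}$, I would first replace $f$ by the family agreeing with it on $\Omega'$ and vanishing on $\Omega\setminus\Omega'$ — still an element of $\mathscr{C}^0_{\mathrm{a}}(X)$ by Corollary~\ref{coro:extension:by:zero} — and thereby assume $f\in\mathscr{C}^0_{\mathrm{a}}(X;\Omega')$, which is a real vector space. The plan, following Szpiro--Ullmo--Zhang together with the differentiability viewpoint of the introduction, is to express $\delta_{\overline L,\cdot,\Omega'}(f)$ at $\cdot=X$ as the derivative at $0$ of a concave $\chi$-volume functional, to minorize a limit inferior of such functionals by this one, and then to squeeze.

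For an integral closed subscheme $Y$ of $X$ with $L|_Y$ big I would set $\Phi_Y(g):=\avol_\chi\big((L,\varphi+g)|_Y\big)\big/\big((\dim(Y)+1)\deg_L(Y)\big)$ for $g\in\mathscr{C}^0_{\mathrm{a}}(X;\Omega')$. Here $L|_Y$ is semi-ample and big, hence slope-bounded, so $\avol_\chi$ is real-valued; $\Phi_Y$ is concave by Corollary~\ref{Cor: concavity of vol chi} and the affineness of $g\mapsto\varphi|_Y+g|_Y$; $\Phi_Y(0)=h_{\overline L}(Y)$ by the arithmetic Hilbert--Samuel formula (see \cite{CMHS}), since $(L,\varphi)|_Y$ is relatively nef and big; and $t\mapsto\Phi_Y(tf)$ is differentiable at $0$ by Proposition~\ref{Pro: differentiability of vol chi} applied on $Y$, with derivative
\[
\frac{1}{\deg_L(Y)}\int_{\Omega'}\bigg(\int_{Y_\omega^{\mathrm{an}}}f_\omega\,c_1(L_\omega|_{Y_\omega},\varphi_\omega|_{Y_\omega})^{\dim(Y)}\bigg)\,\nu(\mathrm{d}\omega)=\delta_{\overline L,Y,\Omega'}(f),
\]
the part over $\Omega\setminus\Omega'$ vanishing because $f_\omega=0$ there. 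For $Y=X$ this derivative is exactly $\delta_{\overline L,X,\Omega'}(f)$ (using $\deg_L(X)=(L^d)$).

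Next I would introduce $\Phi_\bullet(g):=\liminf_{n\to\infty}\Phi_{Y_n}(g)$. It is concave as a limit inferior of concave functionals, and finite since $|\Phi_{Y_n}(g)-h_{\overline L}(Y_n)|\le\int_\Omega\sup_{x\in X_\omega^{\mathrm{an}}}|g_\omega|(x)\,\nu(\mathrm{d}\omega)$ while $h_{\overline L}(Y_n)$ converges by~(3). Hypothesis~(3) gives $\Phi_\bullet(0)=\lim_n h_{\overline L}(Y_n)=h_{\overline L}(X)=\Phi_X(0)$, and hypothesis~(1) gives the minorization $\Phi_\bullet\ge\Phi_X$: for a generic sequence one has $\liminf_n\Phi_{Y_n}(g)\ge\widehat{\mu}_{\max}^{\mathrm{asy}}(L,\varphi+g)$ (the standard lower bound for normalized $\chi$-heights along a generic sequence, see \cite[\S5.2]{CMHS}), while $\widehat{\mu}_{\max}^{\mathrm{asy}}(L,\varphi+g)\ge\widehat{\mu}^{\mathrm{asy}}(L,\varphi+g)=\Phi_X(g)$ because the asymptotic maximal slope dominates the asymptotic average slope, the latter equalling the normalized $\chi$-volume by the definition of $\avol_\chi$ and the estimate $\dim_K H^0(X,L^n)\sim\vol(L)n^d/d!$.

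Finally I would conclude by squeezing. Fix $f$ and $t>0$; concavity of $\Phi_{Y_n}$ gives $\delta_{\overline L,Y_n,\Omega'}(f)=\frac{\mathrm{d}}{\mathrm{d}t}\big|_{t=0}\Phi_{Y_n}(tf)\ge t^{-1}\big(\Phi_{Y_n}(tf)-\Phi_{Y_n}(0)\big)$. Taking $\liminf_n$, using $\limsup_n\Phi_{Y_n}(0)=\Phi_X(0)$ and $\liminf_n\Phi_{Y_n}(tf)\ge\Phi_X(tf)$, then letting $t\to0^+$ and invoking the differentiability of $t\mapsto\Phi_X(tf)$ at $0$, yields $\liminf_n\delta_{\overline L,Y_n,\Omega'}(f)\ge\delta_{\overline L,X,\Omega'}(f)$. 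Applying the same with $-f$ in place of $f$, and using that $g\mapsto\delta_{\overline L,Y_n,\Omega'}(g)$ is $\mathbb{R}$-linear (the Monge--Amp\`ere measures $\delta_{\overline L,Y_n,\omega}$ being independent of $g$), gives the reverse inequality $\limsup_n\delta_{\overline L,Y_n,\Omega'}(f)\le\delta_{\overline L,X,\Omega'}(f)$, hence~\eqref{Equ: equi distribution equality}. I expect the hard part to be the minorization $\Phi_\bullet\ge\Phi_X$: it is the only step using genericity~(1) in an essential way, and it carries the arithmetic positivity content (abundance of sections of small norm) of the Szpiro--Ullmo--Zhang method; once it and Proposition~\ref{Pro: differentiability of vol chi} are granted, the remainder is a formal convexity argument.
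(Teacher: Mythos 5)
Your proposal is correct and follows essentially the same argument as the paper: define the concave limit-inferior functional $\Phi_\bullet$, bound it below by $\Phi_X$ via $\widehat{\mu}_{\max}^{\mathrm{asy}}$ using genericity, match the values at $0$ using hypotheses (2)–(3), then squeeze via concavity and the G\^ateaux differentiability of Proposition~\ref{Pro: differentiability of vol chi}, applying the argument to both $f$ and $-f$. The paper reduces first to the case $\Omega'=\Omega$ and includes a (dispensable) aside on the uniqueness of the supporting linear form, but the essential steps coincide with yours.
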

\begin{proof}
First we assume that $\Omega' = \Omega$.
For any $f\in\mathscr C^0_{\mathrm{a}}(X))$, let 
\[\Psi(f)=\liminf_{n\rightarrow+\infty}
\frac{\widehat{\mathrm{vol}}_{\chi}(\overline L(f)|_{Y_n})}{(\dim(Y_n)+1)\deg_L(Y_n))}.\]
By Corollary \ref{Cor: concavity of vol chi}, this is a concave function on $\mathscr C^0_{\mathrm{a}}(X)$. Since the sequence $(Y_n)_{n\in\mathbb N}$ is generic, $\Psi(f)$ is bounded from below by \[\widehat{\mu}_{\max}^{\mathrm{asy}}(\overline L(f))\geqslant\frac{\widehat{\operatorname{vol}}_{\chi}(\overline L(f))}{(d+1)\deg_L(X)}.\]
Moreover, the hypothesis of the theorem requires that the equality 
\begin{equation}\label{Equ: psi 0}\Psi(0)=\frac{\widehat{\operatorname{vol}}_{\chi}(\overline L(f))}{(d+1)\deg_L(X)}.\end{equation}
By Proposition \ref{Pro: differentiability of vol chi}, the function $f\mapsto \widehat{\operatorname{vol}}_{\chi}(\overline L(f))$ is G\^{a}teaux differentiable at $f=0$ and its differential is given by the linear form
\[f\longmapsto (d+1)\deg_L(X)\delta_{\overline L,X}(f).\]
Since $\Psi$ is a concave function, there exists a linear form $\ell: \mathscr C^0_{\mathrm{a}}(X)\rightarrow\mathbb R$ such that $\ell(f)+\Psi(0)\geqslant\Psi(f)$ for any $f\in\mathscr C^0_{\mathrm{a}(X)}$. We then deduce, by the equality \eqref{Equ: psi 0}, that $\ell(f)\geqslant \delta_{\overline L,X}(f)$ for any $f\in\mathscr C^0_{\mathrm{a}}(X)$, which leads to $\ell=\delta_{\overline L,X}(f)$. Thus $\delta_{\overline L,X}(\ndot)$ is the unique linear form on $\mathscr C^0_{\mathrm{a}}(X)$ such that $\delta_{\overline L,X}(f)+\Psi(0)\geqslant\Psi(f)$ for any $f\in\mathscr C^0_{\mathrm{a}}(X)$. 

By the condition (2) of the theorem, one has 
\[\Psi(0)=\lim_{n\rightarrow+\infty}
\frac{\widehat{\mathrm{vol}}_{\chi}(\overline L|_{Y_n})}{(\dim(Y_n)+1)\deg_L(Y_n))}\]
and hence
\[\Psi(f)-\Psi(0)=\liminf_{n\rightarrow+\infty}\frac{\widehat{\mathrm{vol}}_{\chi}(\overline L(f)|_{Y_n})-\widehat{\mathrm{vol}}_{\chi}(\overline L|_{Y_n})}{(\dim(Y_n)+1)\deg_L(Y_n)}.\]
Since the function 
\[(f\in\mathscr C^0_{\mathrm{a}}(X))\rightarrow \widehat{\mathrm{vol}}_{\chi}(\overline L(f)|_{Y_n})\]
is concave and G\^{a}teaux differentiable at $f=0$ with differential $\delta_{\overline L,Y_n}(\ndot)$, we obtain 
\[\Psi(f)-\Psi(0)\leqslant\liminf_{n\rightarrow+\infty}\delta_{\overline L,Y_n}(f).\]
Applying this inequality to $tf$ with $t>0$, and taking the limit when $t\rightarrow 0$, we obtain
\[\delta_{\overline L,X}(f)\leqslant\liminf_{n\rightarrow+\infty}\delta_{\overline L,Y_n}(f). \]
Replacing $f$ by $-f$, we obtain
\[\delta_{\overline L,X}(f)\geqslant\limsup_{n\rightarrow+\infty}\delta_{\overline L,Y_n}(f).\]
Therefore,
\[\delta_{\overline L,X}(f)=\lim_{n\rightarrow+\infty}\delta_{\overline L,Y_n}(f).\]

The general case is a consequence of the previous case and Corollary~\ref{coro:extension:by:zero}.
\end{proof}

\begin{rema} Let $L$ be a big invertible $\mathcal O_X$-module and
let $Z$ be its augmented base locus. Note that $Z$ is a proper closed subset of $X$. If $(Y_n)_{n\in\mathbb N}$ is a generic sequence of integral closed subschemes of $X$, the set $\{n\in\mathbb N\,:\,Y_n\subseteq Z\}$ is finite. Therefore, for sufficiently large $n\in\mathbb N$, the restriction of $L$ to $Y_n$ is big. This is a consequence of \cite[Theorem 1.5]{MR3673639} on the positivity of restricted volumes. 
\end{rema}

\begin{rema} Note that
\[
\begin{cases}
{\displaystyle \delta_{\overline L, Y_n, \Omega'}(f) = \int_{\Omega'} \nu(\mathrm{d} \omega) \int_{X_\omega^{\mathrm{an}}}f_\omega(x)\,\delta_{\overline{L}, Y_n, \omega} (\mathrm{d}x)}, \\[2ex]
{\displaystyle \delta_{\overline L, X, \Omega'}(f) = \int_{\Omega'} \nu(\mathrm{d} \omega) \int_{X_\omega^{\mathrm{an}}}f_\omega(x)\,\delta_{\overline{L}, X, \omega} (\mathrm{d}x)},
\end{cases}
\]
so if $\nu(\Omega') = 0$, then $\delta_{\overline L, Y_n, \Omega'}(f) = \delta_{\overline L, X, \Omega'}(f) = 0$. Thus Theorem~\ref{theorem:equidistribution} has a meaning in the case where $\nu(\Omega') > 0$.
\end{rema}

\begin{rema}
By Theorem~\ref{theorem:equidistribution}, 
 we can recover the equidistribution theorem over an arithmetic function field including a number field case (cf. \cite[Theorem~6.1]{MR1779799}), so the Bogomolov conjecture over an arithmetic function field is also derived.
\end{rema}

\section{Bogomolov's conjecture over a countable field of characteristic zero}

Throughout this section, we assume that $S$ is proper and that $K$ is algebraically closed, countable and of characteristic $0$. We assume in addition that $\nu(\Omega_\infty)>0$ and $\nu(\mathcal A) \not\subseteq \{ 0, 1 \}$.

Let $X$ be a projective integral scheme over $K$ and $\overline{L}$ be an adelic line bundle on $X$.
The \emph{essential minimum}  $\hat{\mu}_{\mathrm{ess}}(\overline{L})$ of $\overline{L}$ is defined to be
\[
\hat{\mu}_{\mathrm{ess}}(\overline{L}) := \sup_{Z \subsetneq X} \inf_{x \in (X \setminus Z)(K)} h_{\overline{L}}(x)
\]
where $Z$ runs over the set of all proper closed subsets of $X$.

Let $A$ be an abelian variety over $\Spec K$. 
For any integer $n$, let  $[n]:A\rightarrow A$ be the morphism of multiplication by $n$. Let $L$ be a symmetric ample invertible $\mathcal O_A$-module. 
If we fix an isomorphism $[2]^*(L) \simeq L^{\otimes 4}$, then, for each $\omega \in \Omega$,
we can assign the local canonical compactification $\varphi_{\omega}$ (with respect to $[2]$) to $L_{\omega}$.
If we set $\varphi = \{ \varphi_{\omega} \}_{\omega \in \Omega}$, then, by Proposition~\ref{prop:global:compactifcation},
$(L, \varphi)$ is a nef adelic line bundle on $X$.
Since $[2]$ and $[n]$ are commutative and $K$ is algebraically closed, by Remark~\ref{rem:compactibility:compactification},
we can find a suitable isomorphism $[n]^*(L) \simeq L^{\otimes n^2}$ such that
the local canonical compatification of $L_{\omega}$ with respect to $[n]$ coincides with $\varphi_\omega$.

\begin{theo}\label{thm:Bogomolov:conj}
Let $X$ be an integral subscheme of $A$ 
such that the stabilizer of $X$ is trivial. 
If $\dim X > 0$,
then the essential minimum 
$\hat{\mu}_{\mathrm{ess}}(\rest{\overline{L}}{X})$ of $X$ is strictly positive.
\end{theo}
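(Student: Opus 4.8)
The plan is to argue by contradiction and run the classical argument of Ullmo and Zhang with the equidistribution theorem \ref{theorem:equidistribution} as the analytic input. Suppose $\hat{\mu}_{\mathrm{ess}}(\rest{\overline{L}}{X})=0$. The first step is purely geometric. Using the parallelogram law $h_{\overline{L}}(x-y)\leqslant 2h_{\overline{L}}(x)+2h_{\overline{L}}(y)$ for the N\'eron-Tate height to propagate the vanishing of the essential minimum, the elementary fact that an integral subvariety $Y$ with $0\in Y$ and $\dim\overline{Y-Y}=\dim Y$ must be an abelian subvariety, and the triviality of $\operatorname{stab}(X)$ (which, since it forces $X$ not to be a translate of an abelian subvariety, prevents the construction from stalling at the start), I would walk up the chain $\overline{X-X}\subseteq\overline{(X-X)-(X-X)}\subseteq\cdots$ of integral subvarieties: the dimensions strictly increase until the chain reaches an abelian subvariety, and every term has vanishing essential minimum for the restriction of $\overline{L}$. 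Taking $Z$ to be the last member strictly contained in the abelian subvariety $B:=\overline{Z-Z}$ it generates, one arrives at the following situation: $Z$ is integral, $\hat{\mu}_{\mathrm{ess}}(\rest{\overline{L}}{Z})=0$, $Z\subsetneq B$, and $\overline{Z-Z}=B$. From now on one works inside $B$ with the symmetric ample line bundle $\rest{L}{B}$ and its canonical metric.

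Since $\overline{L}$ is relatively nef, N\'eron-Tate heights are non-negative; together with Zhang's inequality $h_{\overline{L}}(W)\leqslant\hat{\mu}_{\mathrm{ess}}(\rest{\overline{L}}{W})$ over a proper adelic curve (a consequence of the arithmetic Hilbert-Samuel theorem, see \cite{CMHS}) this gives $h_{\overline{L}}(Z)=0$ and $h_{\overline{L}}(B)=0$, the latter also because torsion points of $B$ are Zariski dense and of height zero. As $K$ is countable and $\hat{\mu}_{\mathrm{ess}}(\rest{\overline{L}}{Z})=0$, a diagonal construction produces a generic sequence $(x_{n}',x_{n}'')_{n\in\mathbb N}$ of closed points of $Z\times Z$ with $h_{\overline{L}}(x_{n}')\to 0$ and $h_{\overline{L}}(x_{n}'')\to 0$. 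Let $q_{1},q_{2}$ be the projections of $Z\times Z$ and $\overline{M}:=q_{1}^{*}\overline{L}\otimes q_{2}^{*}\overline{L}$; it is big, semi-ample and semi-positively metrized, and $\hat{\mu}_{\mathrm{ess}}(\rest{\overline{M}}{Z\times Z})=0$ is witnessed by this sequence, whose $\overline{M}$-heights are $h_{\overline{L}}(x_{n}')+h_{\overline{L}}(x_{n}'')\to 0$, so $h_{\overline{M}}(Z\times Z)=0$. Applying Theorem~\ref{theorem:equidistribution} on $Z\times Z$ to the closed points $(x_{n}',x_{n}'')$, over $\Omega'=\Omega_{\infty}$, yields $\delta_{\overline{M},(x_{n}',x_{n}''),\Omega_{\infty}}\to\delta_{\overline{M},Z\times Z,\Omega_{\infty}}$. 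The morphism $\alpha\colon Z\times Z\to B$, $(z,w)\mapsto z-w$, is dominant because $\overline{Z-Z}=B$, so $(x_{n}'-x_{n}'')_{n\in\mathbb N}$ is a generic sequence of closed points of $B$ with $h_{\overline{L}}(x_{n}'-x_{n}'')\to 0$, and a second application of Theorem~\ref{theorem:equidistribution}, now on $B$ with $\rest{\overline{L}}{B}$, gives $\delta_{\overline{L},(x_{n}'-x_{n}''),\Omega_{\infty}}\to\delta_{\overline{L},B,\Omega_{\infty}}$.

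Next I would compare the two limits through $\alpha$. For $f\in\mathscr{C}^{0}_{\mathrm a}(B)$ one has $\delta_{\overline{L},(x_{n}'-x_{n}''),\Omega_{\infty}}(f)=\delta_{\overline{M},(x_{n}',x_{n}''),\Omega_{\infty}}(f\circ\alpha)$, so letting $n\to\infty$ yields $\delta_{\overline{L},B,\Omega_{\infty}}=\alpha_{*}\delta_{\overline{M},Z\times Z,\Omega_{\infty}}$ as measures on $B_{\Omega_{\infty}}^{\mathrm{an}}$. Fibrewise the canonical measure of a product is the product of the canonical measures, $\delta_{\overline{M},Z\times Z,\omega}=\delta_{\overline{L},Z,\omega}\otimes\delta_{\overline{L},Z,\omega}$, so by uniqueness of disintegration over $\nu|_{\Omega_{\infty}}$ (Remark~\ref{Rem: unique determined by integrals along C}, Propositions~\ref{Pro: measurability of local interals} and~\ref{prop:Radon-Nikodym}) there is an archimedean place $\sigma$ — here one uses $\nu(\Omega_{\infty})>0$ — such that, writing $\mu_{\sigma}:=\delta_{\overline{L},Z,\sigma}$,
\[
\frac{c_{1}(L_{\sigma},\varphi_{\sigma})^{\dim B}}{\deg_{L}(B)}\;=\;\alpha_{*}\bigl(\mu_{\sigma}\otimes\mu_{\sigma}\bigr)\;=\;\mu_{\sigma}*\check{\mu}_{\sigma}
\]
as Borel probability measures on $B_{\sigma}(\mathbb C)$, where $\check{\mu}_{\sigma}$ is the image of $\mu_{\sigma}$ under $[-1]$. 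Since $\varphi_{\sigma}$ is the canonical metric, $\tau_{a}^{*}\overline{L}_{\sigma}$ differs from $\overline{L}_{\sigma}$ by a flatly metrized element of $\operatorname{Pic}^{0}$, so the left-hand side is translation invariant and hence equals the normalised Haar measure on $B_{\sigma}(\mathbb C)$; taking Fourier coefficients of the displayed identity gives $|\widehat{\mu_{\sigma}}(\chi)|^{2}=0$ for every non-trivial character $\chi$, so $\mu_{\sigma}$ is itself Haar. This is absurd, because $\mu_{\sigma}$ is supported on $Z_{\sigma}(\mathbb C)$, a proper Zariski closed subset of $B_{\sigma}(\mathbb C)$ and therefore of Haar measure zero. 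This contradiction proves the theorem.

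The delicate point — and the reason the paper builds the global adelic space of Section~3 — is the passage from the two weak limits to the fibrewise identity above: Theorem~\ref{theorem:equidistribution} only delivers convergence of the \emph{global} functionals $\delta_{\,\cdot\,,\Omega_{\infty}}$, so one must verify that pushforward along $\alpha$ and the Fubini-type product formula for canonical measures are compatible with the measurable fibration $X_{\Omega_{\infty}}^{\mathrm{an}}\to\Omega_{\infty}$, and then invoke uniqueness of disintegration (through the measurability results of Section~2 and Propositions~\ref{Pro: measurability of local interals} and~\ref{prop:Radon-Nikodym}) to descend an almost-everywhere equality of measure families to a single archimedean place $\sigma$. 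The remaining ingredients — non-negativity of N\'eron-Tate heights, Zhang's inequality, and the identification of the archimedean canonical measure of an abelian variety with Haar measure (which rests on the functional equation $[m]^{*}\overline{L}\cong\overline{L}^{\otimes m^{2}}$ recorded for the canonical compactification) — are by now standard. I expect the hypotheses $\nu(\Omega_{\infty})>0$ and $\nu(\mathcal{A})\not\subseteq\{0,1\}$ to intervene only in guaranteeing, respectively, an archimedean place at which to run the Fourier argument and the non-degeneracy of the adelic structure needed for Zhang's inequality and for Theorem~\ref{theorem:equidistribution}.
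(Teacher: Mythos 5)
Your proof is correct in outline but follows a genuinely different route from the paper's. The paper sidesteps the classical Ullmo--Zhang difference chain entirely: it works directly with a morphism $f\colon X^m\to A^{m-1}$, $(x_1,\dots,x_m)\mapsto(x_2-x_1,\dots,x_m-x_{m-1})$, chosen (using the triviality of the stabilizer and $\dim X>0$) to be birational onto its image but not finite. A single generic small sequence $(y_n)$ of closed points on $X^m$ is then simultaneously small for the two polarizations $\overline L^{\boxtimes m}$ and $f^*(\overline L^{\boxtimes(m-1)})$, and since for closed points over an algebraically closed field the Radon measures $\delta_{\overline L^{\boxtimes m},y_n,\omega}$ and $\delta_{f^*(\overline L^{\boxtimes(m-1)}),y_n,\omega}$ literally coincide, the two limit Monge--Amp\`ere families on $(X^m)_{\Omega_\infty}^{\mathrm{an}}$ are forced to be equal. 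The contradiction is then extracted from the smooth Radon--Nikodym density $q_\omega$ between the two Monge--Amp\`ere forms, whose $\omega$-measurability is exactly Proposition~\ref{prop:measurable:partial:derivative}: Proposition~\ref{prop:Radon-Nikodym} forces $q_\omega\equiv 1$ almost everywhere, while $q_\omega$ is continuous on the smooth locus and vanishes on the diagonal because $f$ contracts it.

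You instead climb the chain $\overline{X-X}\subseteq\overline{(X-X)-(X-X)}\subseteq\cdots$ to reach $Z\subsetneq B=\overline{Z-Z}$ with $B$ abelian, apply Theorem~\ref{theorem:equidistribution} on $Z\times Z$ and on $B$, push forward by subtraction, identify the canonical measure on $B_\sigma(\CC)$ with Haar, and finish by the convolution/Fourier identity $|\widehat{\mu_\sigma}(\chi)|^2=0$. The two arguments reach the same kind of impossibility (a probability measure that is simultaneously Haar and supported on a proper closed subset versus a fibrewise density that is simultaneously $1$ and vanishing on the diagonal). Two remarks on what each route costs. First, the step you gloss over — descending from the global identity $\delta_{\overline L,B,\Omega_\infty}=\alpha_*\delta_{\overline M,Z\times Z,\Omega_\infty}$ to a single place $\sigma$ — is exactly where Proposition~\ref{prop:Radon-Nikodym} must be invoked, and that proposition applies to a \emph{measurable} fibrewise density family; so you still have to prove the $\omega$-measurability of the density of $\alpha_*(\mu_\omega\otimes\mu_\omega)$ against the Haar family, a task of essentially the same nature as Proposition~\ref{prop:measurable:partial:derivative}, so you are not really saving on the measurability work in Section~2. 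Second, what the paper's route buys is the elimination of the chain argument and the diagonal extraction of generic small sequences, at the cost of the preliminary geometric lemma that the Zhang map is birational onto its image but not finite; what your route buys is the elimination of that geometric lemma and the replacement of the "density vanishes on the diagonal" argument by the cleaner Fourier-theoretic one, at the cost of re-importing the chain construction and the propagation of the vanishing essential minimum along it.
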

\begin{proof}
There exists an integer $m\geqslant 1$ such that the morphism 
\[f:X^m\longrightarrow A^{m-1},\quad (x_1,\ldots,x_m)\longmapsto (x_2-x_1,\ldots,x_m-x_{m-1})\]
is birational onto its image but not finite. We assume by contradiction that there exists a generic sequence $(x_n)_{n\in\mathbb N}$ in $X(K)$ such that $h_{\overline L}(x_n)$ converges to $0$ when $n\rightarrow+\infty$. This sequence permits us to construct a generic sequence $(y_n)_{n\in\mathbb N}$ in $X^m$ such that $h_{\overline L^{\boxtimes m}}(y_n)$ converges to $0$ when $n\rightarrow+\infty$. 
Moreover, since N\'eron-Tate height is a quadratic form, we also deduce that $h_{\overline L^{\boxtimes(m-1)}}(f(y_n))$ converges to $0$ when $n\rightarrow+\infty$,
that is, \[\lim_{n\to\infty} h_{f^*(\overline L^{\boxtimes(m-1)})}(y_n) = 0.\]

Applying the equidistribution theorem  (Theorem \ref{theorem:equidistribution}) to the sequence $(y_n)_{n\in\mathbb N}$, 
we deduce that the sequences of measures
\[\delta_{\overline L^{\boxtimes m},y_n,\Omega_\infty}\quad\text{and}\quad\delta_{f^*(\overline L^{\boxtimes(m-1)}),y_n,\Omega_\infty},\quad n\in\mathbb N\]
converge weakly to $\delta_{\overline L^{\boxtimes m},X^m,\Omega_\infty}$ and $\delta_{f^*(\overline L^{\boxtimes(m-1)}),X^m,\Omega_\infty}$, respectively, 
and hence
\[\delta_{\overline L^{\boxtimes m},X^m,\Omega_\infty}=\delta_{f^*(\overline L^{\boxtimes (m-1)}), X^m,\Omega_\infty}.\]
Let $X_{\operatorname{reg}}$ be the regular locus of $X$, viewed as an open subscheme of $X$. 
Then, for any $\omega\in\Omega$, there exists a unique smooth and non-negative function $q_\omega(\ndot)$ on $(X^m_{\operatorname{reg}})_{\omega}^{\mathrm{an}}$ 
such that, if we extend the domain of definition of $q_{\omega}$ to $(X^{m})_\omega^{\mathrm{an}}$ by taking the value $0$ on $(X^{m})_\omega^{\mathrm{an}}\setminus (X_{\operatorname{reg}}^m)_\omega^{\mathrm{an}}$ 
(which is of measure $0$ with respect to $\delta_{\overline L^{\boxtimes m},X^m,\omega}$), the following relation holds
\[q\,\delta_{\overline L^{\boxtimes m},X^m,\Omega_\infty}=\delta_{f^*(\overline L^{\boxtimes m-1}),X^m,\Omega_\infty},\]
where $q(x) = q_\omega(x)$ for $x \in X_\omega^{\mathrm{an}}$.

Let $\pi : X^m \to \PP_K^{m\dim X}$ be a projection and $V$ be an affine Zariski open set of $\PP^{m\dim X}_K$ such that
if we set $U = \pi^{-1}(V)$, then $U \subseteq X_{\operatorname{reg}}^m$ and $\pi : U \to V$ is \'{e}tale.
By Proposition~\ref{prop:measurable:partial:derivative}, 
for any rational point $x$ of $U$, the function
\[(\omega\in\Omega_\infty)\longmapsto q_\omega(x)\]
is $\mathcal A$-measurable. In particular, if we consider  
a function $p_\omega$ on $X_\omega^{\mathrm{an}}$ given by
\[
p_{\omega}(x) = \begin{cases}
q_\omega(x) & \text{if $x \in U_\omega^{\mathrm{an}}$}, \\
0 & \text{otherwise},
\end{cases}
\]
then $(p_{\omega})_{\omega \in \Omega_{\infty}}$ is actually a measurable function 
family, so that the function
\[(X^{m})^{\mathrm{an}}_{\Omega_\infty}\longrightarrow\mathbb R,\quad (x\in (X^{m})^{\mathrm{an}}_{\Omega_\infty})\longmapsto p_\omega(x)\]
is $\mathcal B_{X,\Omega_\infty}$-measurable. 
Moreover, the equation  \[p\,\delta_{\overline L^{\boxtimes m},X^m,\Omega_\infty}=\delta_{f^*(\overline L^{\boxtimes m-1}),X^m,\Omega_\infty}\] holds as measures (see Remark \ref{Rem: unique determined by integrals along C}), where $p(x)=p_\omega(x)$ when $x\in X_\omega^{\mathrm{an}}$.
Therefore, by Proposition~\ref{prop:Radon-Nikodym},
there exists $\Omega'\subseteq\Omega_\infty$ such that $\nu(\Omega_\infty\setminus\Omega')=0$ and that the equality $p_\omega(x)=1$
holds $\delta_{\overline L^{\boxtimes m},X^m,\omega}$-almost everywhere for any $\omega\in
\Omega'$, which implies \[q_\omega(x) = 1\] $\delta_{\overline L^{\boxtimes m},X^m,\omega}$-almost everywhere on $X_{\omega}^{m,\mathrm{an}}$ for any $\omega\in
\Omega'$. This leads to a contradiction since $q_\omega$ is continuous on $X_{\operatorname{reg},\omega}^{m,\mathrm{an}}$ and vanishes at the diagonal points of it.
\end{proof}

As a consequence of the above theorem, we have the following answer of Bogomolov's conjecture for $K$.

\begin{coro}
Let $X$ be an integral subscheme of $A$. If $\hat{\mu}_{\mathrm{ess}}(\rest{\overline{L}}{X}) = 0$, 
then  $X$ is a translation of an abelian subvariety by a closed point of height $0$.
\end{coro}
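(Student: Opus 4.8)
The plan is to reduce to the trivial-stabilizer case, where Theorem~\ref{thm:Bogomolov:conj} applies directly, by passing to the quotient of $A$ by the stabilizer of $X$, and then to transport the conclusion back to $A$ using Poincar\'e's complete reducibility theorem. Throughout, $\overline{L}=(L,\varphi)$ denotes the canonical adelic line bundle fixed above, so that on $A(K)$ the normalized height $h_{\overline{L}}$ is the associated N\'eron--Tate height; it is non-negative since $(A,\varphi)$ is nef, and the same holds on the quotient abelian varieties considered below. The degenerate cases $\dim X=0$ and $X=A$ will be covered uniformly by the argument.

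First I would set $H:=\operatorname{Stab}(X)$, a smooth closed subgroup scheme of $A$ (as $\operatorname{char}K=0$), form the quotient abelian variety $B:=A/H$ with projection $p:A\to B$, and put $X':=p(X)$. Since $X$ is a union of $H$-cosets, $X=p^{-1}(X')$, and $X'$ is an integral closed subscheme of $B$ with trivial stabilizer: any $\bar a\in B(K)$ fixing $X'$ lifts to $a\in A(K)$ with $a+X\subseteq p^{-1}(X')=X$, whence $a+X=X$ by irreducibility, so $a\in H$ and $\bar a=0$. Next I would fix a symmetric ample $\mathcal{O}_B$-module $M$ equipped with its canonical metric family, giving a nef adelic line bundle $\overline{M}$ on $B$. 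Since the ample cone is open and $L$ lies in its interior, there is an integer $c\geqslant 1$ with $L^{\otimes c}\otimes p^{*}(M)^{\vee}$ ample; it is symmetric ($L$ and $M$ being so), hence of non-negative N\'eron--Tate height, and as N\'eron--Tate heights of symmetric line bundles are additive and $h_{p^{*}\overline{M}}=h_{\overline{M}}\circ p$ on $A(K)$ by functoriality, this gives $h_{\overline{M}}(p(y))\leqslant c\,h_{\overline{L}}(y)$ for all $y\in A(K)$. Consequently, for every proper closed $Z'\subsetneq X'$ the set $p^{-1}(Z')$ is a proper closed subset of $X$ and $p\big((X\setminus p^{-1}(Z'))(K)\big)=(X'\setminus Z')(K)$, so $\inf_{(X'\setminus Z')(K)}h_{\overline{M}}\leqslant c\inf_{(X\setminus p^{-1}(Z'))(K)}h_{\overline{L}}\leqslant c\,\hat{\mu}_{\mathrm{ess}}(\rest{\overline{L}}{X})=0$; taking the supremum over $Z'$ and using $h_{\overline{M}}\geqslant 0$ yields $\hat{\mu}_{\mathrm{ess}}(\rest{\overline{M}}{X'})=0$.

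Now I would apply Theorem~\ref{thm:Bogomolov:conj} to $X'\subseteq B$: its stabilizer being trivial, $\dim X'>0$ would force $\hat{\mu}_{\mathrm{ess}}(\rest{\overline{M}}{X'})>0$, contradicting the previous step. Hence $\dim X'=0$, that is $X'=\{y_0\}$ for a single closed point $y_0\in B(K)$ with $h_{\overline{M}}(y_0)=0$, and $X=p^{-1}(y_0)$. As a scheme $p^{-1}(y_0)$ is a translate of $H$, so irreducibility of $X$ forces $H$ to be connected, i.e.\ an abelian subvariety of $A$, and $X$ is a coset of $H$. By complete reducibility there is an abelian subvariety $H'\subseteq A$ with $H+H'=A$ and $H\cap H'$ finite, so $p|_{H'}:H'\to B$ is an isogeny; choose $c_0\in H'(K)$ with $p(c_0)=y_0$. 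Then $c_0\in p^{-1}(y_0)=X$, hence $X=c_0+H$; moreover $\rest{L}{H'}$ and $(p|_{H'})^{*}M$ are both ample and symmetric on $H'$, so $\hat h_{\rest{L}{H'}}\leqslant C\,\hat h_{(p|_{H'})^{*}M}$ for some $C>0$, which together with functoriality gives $h_{\overline{L}}(c_0)=h_{\rest{\overline{L}}{H'}}(c_0)\leqslant C\,h_{(p|_{H'})^{*}\overline{M}}(c_0)=C\,h_{\overline{M}}(y_0)=0$. Thus $X=c_0+H$ is the translation of the abelian subvariety $H$ by the closed point $c_0$ of $\overline{L}$-height $0$.

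The step requiring the most care is the second one: one must realize the vanishing of the essential minimum as a quantity preserved under the projection $p:A\to A/\operatorname{Stab}(X)$, and the right device is the comparison $L^{\otimes c}\otimes p^{*}(M)^{\vee}$ ample, which forces the local heights on $B$ to be dominated by those on $A$ at the level of N\'eron--Tate heights. Once $X$ is known to be a coset of $H$, extracting a height-zero representative via a complementary abelian subvariety is routine, as is checking that the argument degenerates correctly when $\dim X=0$ (then $H=\{0\}$, $X'=X$ a point of height $0$) or $X=A$ (then $H=A$, $B=0$).
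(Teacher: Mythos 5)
Your proof is correct and follows the same strategy as the paper: reduce to the trivial-stabilizer case by passing to $B=A/\operatorname{Stab}(X)$ and then invoke Theorem~\ref{thm:Bogomolov:conj}. The paper simply delegates the entire reduction (including the height comparison across $p$ and the lift of a height-zero point via Poincar\'e complete reducibility) to a citation of \cite[the last paragraph in the proof of Theorem~9.20]{MArakelov}, whereas you have spelled it out in full.
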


\begin{proof}
In the same argument as \cite[the last paragraph in the proof of Theorem~9.20]{MArakelov}, we may assume that the stabilizer of $X$ is trivial.
Thus, by Theorem~\ref{thm:Bogomolov:conj}, one has $\dim X = 0$, so we set $X = \{ x \}$.
Thus $\hat{\mu}_{\mathrm{ess}}(\rest{\overline{L}}{X}) = h_{\overline{L}}(x) = 0$, as required.
\end{proof}

\begin{rema}
Assume that any finitely generated subfield of $K$ has Northcott's property.
Then any closed point of height 0 with respect to the N\'{e}ron-Tate height on the abelian variety $A$ is a torsion point.
Indeed, we choose a subfield $K'$ of $K$ such that
$A$, $L$ and $x$ are defined over $K'$ and $K'$ is finitely generated over $\mathbb Q$. Then, by Northcott's property,
$\{ n x \mid n \in \mathbb Z \}$ is a finite group because $h_{\overline{L}}(nx) = 0$ for all $n \in \mathbb Z$,  so $x$ is a torsion point.

The geometric analogue of Bogomolov's conjecture for Abelian varieties over function fields has been proved by a series of works of Gubler \cite{MR2318560}, Yamaki \cite{MR3117164,MR3505646,MR3747175,MR3671938,MR3836145}, Gao-Habbegger \cite{MR3922127}, Cantat-Gao-Habegger-Xie \cite{MR4202494} and Xie-Yuan \cite{MR4448992}. It is an interesting question to investigate the condition (on the polarized Abelian variety $(A,L)$) under which the result of Theorem \ref{thm:Bogomolov:conj} holds without the assumption $\nu(\Omega_\infty)>0$. 
\end{rema}

\appendix 

\section{Canonical compactification}

In this appendix, we recall several basic facts on the canonical compactification.

\subsection{Local canonical compactification}

Let $(k,|\ndot|)$ be a field equipped with a complete absolute value.
Let $X$ be a geometrically integral projective variety over $k$.
In particular, $H^0(X, \mathcal{O}_X) = k$.
Let $f : X \to X$ be a surjective endomorphism of $X$ over $k$ and $L$ be a semiample line bundle on $X$.
We assume that there exists an isomorphism $\alpha : f^*(L) \simeq L^{\otimes d}$ for some integer $d \geqslant 2$. 
It is well-known that there exists a unique semipositive metric $\varphi_{f,\alpha}$ of $L^{\mathrm{an}}$
such that $\alpha$ induces an isometry
$f^*(L, \varphi_{f,\alpha}) \simeq (L, \varphi_{f,\alpha})^{\otimes d}$ (cf. \cite[Proposition~2.5.11]{CMArakelovAdelic}), that is,
$|\ndot|_{f^*(\varphi_{f,\alpha})} = |\alpha(\ndot)|_{d \varphi_{f,\alpha}}$. The metric  $\varphi_{f,\alpha}$ is called the (local) canonical compactification of $L$. 
Let us begin with the following lemma.

\begin{lemm}\label{lem:dynamic:const}
Let $\lambda$ be a continuous function on $X^{\mathrm{an}}$, $a \in \mathbb{R}$ and $b \in \mathbb{R}_{>1}$.
If $f^*(\lambda) = b \lambda + a$, then $\lambda$ is a constant and $\lambda = -a/(b-1)$.
\end{lemm}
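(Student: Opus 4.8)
The plan is to reduce the affine functional equation to a purely linear one by an appropriate constant shift, and then to exploit the compactness of $X^{\mathrm{an}}$ together with the assumption $b > 1$ to force the shifted function to vanish identically.

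Concretely, I would first set $c := -a/(b-1)$ and $g := \lambda - c$, which is again a continuous real-valued function on $X^{\mathrm{an}}$. A direct computation shows that the hypothesis $f^*(\lambda) = b\lambda + a$ is equivalent to $f^*(g) = b g$: indeed $f^*(g) = f^*(\lambda) - c = b\lambda + a - c = b(\lambda - c) + \big((b-1)c + a\big) = bg$, since $(b-1)c + a = -a + a = 0$. Next, writing $f^{n}$ for the $n$-fold composition of the analytified map $f^{\mathrm{an}} : X^{\mathrm{an}} \to X^{\mathrm{an}}$ (which maps $X^{\mathrm{an}}$ into itself because $f$ is an endomorphism), an immediate induction gives $(f^{n})^*(g) = b^{n} g$ for every integer $n \geqslant 1$.

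Finally I would invoke compactness. Since $X$ is projective over $k$, the analytic space $X^{\mathrm{an}}$ is compact, so the continuous function $g$ is bounded, say $|g(x)| \leqslant C$ for all $x \in X^{\mathrm{an}}$. For a fixed $x \in X^{\mathrm{an}}$ and any $n \geqslant 1$, the point $f^{n}(x)$ lies in $X^{\mathrm{an}}$, hence $b^{n}|g(x)| = |g(f^{n}(x))| \leqslant C$, that is $|g(x)| \leqslant C b^{-n}$. Letting $n \to \infty$ and using $b > 1$ we obtain $g(x) = 0$, and since $x$ is arbitrary, $g \equiv 0$, i.e.\ $\lambda \equiv c = -a/(b-1)$, as claimed.

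There is essentially no serious obstacle in this argument: the only two points that deserve a word of justification are that $X^{\mathrm{an}}$ is compact — which is precisely where the projectivity of $X$ enters, valid uniformly in the archimedean and non-archimedean cases — and the elementary identity $(b-1)c + a = 0$ underlying the choice of shift. Note that neither surjectivity of $f$ nor any positivity property of $L$ is used.
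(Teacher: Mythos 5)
Your argument is correct, and it differs from the paper's in a small but interesting way. The paper's proof applies $\max$ and $\min$ (over the compact $X^{\mathrm{an}}$) to both sides of $f^*(\lambda) = b\lambda + a$: since $f$ is surjective, $\max_x \lambda(f(x)) = \max_y\lambda(y)$ and likewise for the minimum, giving $\max\lambda = b\max\lambda + a$ and $\min\lambda = b\min\lambda + a$, whence $\max\lambda = \min\lambda = -a/(b-1)$. Your proof instead shifts by the fixed point $c = -a/(b-1)$, iterates the resulting homogeneous relation $(f^{n})^*(g) = b^{n}g$, and uses boundedness of $g$ together with $b>1$ to kill $g$ pointwise. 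Both routes hinge on compactness of $X^{\mathrm{an}}$. Your version has the merit, which you correctly point out, of not invoking surjectivity of $f$ at all: you only need that $f^{\mathrm{an}}$ maps $X^{\mathrm{an}}$ into itself, whereas the paper's max/min comparison genuinely uses $f^{\mathrm{an}}(X^{\mathrm{an}}) = X^{\mathrm{an}}$. In the context where the lemma is applied (a surjective endomorphism carrying a polarization) this extra generality is not needed, but it is a cleaner statement of what the argument actually requires.
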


\begin{proof}
By our assumption,
\[
 \max\limits_{x \in X^{\mathrm{an}}} \{ \lambda(x) \} = b \max\limits_{x \in X^{\mathrm{an}}} \{ \lambda(x) \} + a \quad\text{and}\quad
\min\limits_{x \in X^{\mathrm{an}}} \{ \lambda(x) \} = b \min\limits_{x \in X^{\mathrm{an}}} \{ \lambda(x) \} + a,
\]
that is, \[(b-1) \max\limits_{x \in X^{\mathrm{an}}} \{ \lambda(x) \} = (b-1)\min \limits_{x \in X^{\mathrm{an}}} \{ \lambda(x) \} = -a,\] and hence
the assertion follows.
\end{proof}

\begin{prop}\label{prop:change:isom}
If we change the isomorphism $\alpha$ by $c\alpha$ $(c \in k^{\times})$,
then \[|\ndot|_{\varphi_{f,c\alpha}} =|c|^{-1/(d-1)} |\ndot|_{\varphi_{f,\alpha}}.\]
\end{prop}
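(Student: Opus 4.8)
The plan is to compare the two canonical metrics and reduce to Lemma~\ref{lem:dynamic:const}. Write $\varphi := \varphi_{f,\alpha}$ and $\psi := \varphi_{f,c\alpha}$ (note $c\alpha$ is still an isomorphism $f^*(L)\simeq L^{\otimes d}$, so $\psi$ is well-defined). Both metrics are continuous, being semipositive, hence there is a continuous function $\lambda$ on $X^{\mathrm{an}}$ with $|\ndot|_{\psi}=\mathrm{e}^{-\lambda}\,|\ndot|_{\varphi}$. First I would record how $\lambda$ interacts with the two operations occurring in the defining identity of a canonical compactification: pulling back by $f$ gives $|\ndot|_{f^*(\psi)}=\mathrm{e}^{-f^*(\lambda)}\,|\ndot|_{f^*(\varphi)}$, and passing to $d$-th tensor powers gives $|\ndot|_{d\psi}=\mathrm{e}^{-d\lambda}\,|\ndot|_{d\varphi}$.

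Next I would substitute these into the characterizing equality $|\ndot|_{f^*(\psi)}=|(c\alpha)(\ndot)|_{d\psi}=|c|\cdot|\alpha(\ndot)|_{d\psi}$ of $\psi$, using also the characterizing equality $|\ndot|_{f^*(\varphi)}=|\alpha(\ndot)|_{d\varphi}$ of $\varphi$. This yields, for every local section $s$ of $f^*(L)$,
\[
\mathrm{e}^{-f^*(\lambda)}\,|\alpha(s)|_{d\varphi}=|c|\cdot\mathrm{e}^{-d\lambda}\,|\alpha(s)|_{d\varphi}.
\]
Evaluating at any point where $\alpha(s)$ does not vanish (such sections exist locally since $\alpha$ is an isomorphism) gives the identity of functions $\mathrm{e}^{-f^*(\lambda)}=|c|\,\mathrm{e}^{-d\lambda}$, equivalently $f^*(\lambda)=d\lambda-\ln|c|$.

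Finally I would invoke Lemma~\ref{lem:dynamic:const} with $b=d\geqslant 2$ and $a=-\ln|c|$, which forces $\lambda$ to be the constant $-a/(b-1)=\ln|c|/(d-1)$. Therefore $|\ndot|_{\psi}=\mathrm{e}^{-\ln|c|/(d-1)}\,|\ndot|_{\varphi}=|c|^{-1/(d-1)}\,|\ndot|_{\varphi}$, which is the assertion. The only delicate point is the bookkeeping of the additive-versus-multiplicative conventions for metrics; the real content of the statement is entirely supplied by Lemma~\ref{lem:dynamic:const}, which is exactly why it was isolated beforehand.
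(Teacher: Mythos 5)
Your argument is correct and is essentially the paper's own proof: you introduce the same auxiliary continuous function $\lambda$ comparing the two canonical metrics, derive the cohomological-dynamics identity $f^*(\lambda)=d\lambda-\ln|c|$, and close with Lemma~\ref{lem:dynamic:const}. The only deviation is a sign convention (you write $|\ndot|_{\psi}=\mathrm{e}^{-\lambda}|\ndot|_{\varphi}$ where the paper writes $\exp(\lambda)$), which flips the sign of $a$ but yields the same constant $|c|^{-1/(d-1)}$.
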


\begin{proof}
Indeed, we can find a continuous function $\lambda$ on $X^{\mathrm{an}}$ such that
$|\ndot|_{\varphi_{f,c\alpha}} = \exp(\lambda)|\ndot|_{\varphi_{f,\alpha}}$.
Thus, 
\begin{align*}
 \exp(f^*(\lambda))|\ndot|_{f^*(\varphi_{f,\alpha})}  & =  |\ndot|_{f^*(\varphi_{f,c\alpha})} = |(c\alpha)(\ndot)|_{d \varphi_{f,c\alpha}} = 
|c| \exp(d\lambda) |\alpha(\ndot)|_{d\varphi_{f,\alpha}} \\
& = \exp(d\lambda + \log |c|)|\ndot|_{f^*(\varphi_{f,\alpha})},
\end{align*}
and hence $f^*(\lambda) = d \lambda + \log |c|$. Therefore, by Lemma~\ref{lem:dynamic:const}, $\lambda$ is constant and
$\lambda = -\log |c|/(d-1)$, as required.
\end{proof}

Let $g : X \to X$ be another surjective endomorphism of $X$ such that 
there exists an isomorphism $\beta : g^*(L) \simeq L^{\otimes e}$ for some integer $e \geqslant 2$.
We assume that $f \circ g = g \circ f$. Let us consider the following homomorphisms
\[
\begin{cases}
\begin{CD}g^*(f^*(L)) @>\sim>{g^*(\alpha)}> g^*(L^{\otimes d}) @>\sim>{\beta^{\otimes d}}> L^{\otimes de}\end{CD}, \\[3ex]
\begin{CD} g^*(f^*(L)) = f^*(g^*(L)) @>\sim>{f^*(\beta)}> f^*(L^{\otimes e}) @>\sim> {\alpha^{\otimes e}}>  L^{\otimes de} \end{CD}.
\end{cases}
\]
Then there exists $r \in k^{\times}$ such that
$\beta^{\otimes d} \circ g^*(\alpha) = r \cdot \alpha^{\otimes e} \circ f^*(\beta)$.
In the case where $r=1$, we say that $(f, \alpha)$ is compatible with $(g, \beta)$.

\begin{prop}\label{prop:change:endo}
One has $|\ndot|_{\varphi_{g,\beta}} = |r|^{-1/(d-1)(e-1)}|\ndot|_{\varphi_{f,\alpha}}$.
In particular, if $(f, \alpha)$ is compatible with $(g, \beta)$, then $\varphi_{f,\alpha} = \varphi_{g,\beta}$.
\end{prop}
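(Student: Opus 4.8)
The plan is to follow the same strategy as in the proof of Proposition~\ref{prop:change:isom}: compare the two semipositive (hence continuous) metrics by a continuous potential on $X^{\mathrm{an}}$ and then show, by the dynamical rigidity of Lemma~\ref{lem:dynamic:const}, that this potential must be constant. The new ingredient compared with Proposition~\ref{prop:change:isom} is that we first have to understand how the ``wrong'' canonical metric $\varphi_{f,\alpha}$ transforms under the \emph{other} endomorphism $g$, and it is here that the commutation relation $\beta^{\otimes d}\circ g^*(\alpha)=r\cdot\alpha^{\otimes e}\circ f^*(\beta)$ intervenes.

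First I would establish an intermediate ``mixed functional equation'' for $\varphi_{f,\alpha}$, namely
\[|\ndot|_{g^*(\varphi_{f,\alpha})}=|r|^{-1/(d-1)}\,|\beta(\ndot)|_{e\varphi_{f,\alpha}}\]
as metrics on $g^*(L)$. To prove this, write $|\ndot|_{g^*(\varphi_{f,\alpha})}=\exp(\mu)\,|\beta(\ndot)|_{e\varphi_{f,\alpha}}$ for a continuous function $\mu$ on $X^{\mathrm{an}}$, and then pull everything back by $f$. Using the defining isometry $|\ndot|_{f^*(\varphi_{f,\alpha})}=|\alpha(\ndot)|_{d\varphi_{f,\alpha}}$ (tensored up to the $d$-th and $e$-th powers) together with $f\circ g=g\circ f$, one computes the metric $f^*g^*(\varphi_{f,\alpha})=g^*f^*(\varphi_{f,\alpha})$ along each of the two composite isomorphisms $g^*(f^*L)\to L^{\otimes de}$ occurring in the hypothesis; the commutation relation then forces $\exp(f^*\mu)=|r|\exp(d\mu)$, i.e. $f^*(\mu)=d\mu+\log|r|$, and Lemma~\ref{lem:dynamic:const} (with $b=d\geqslant 2$) yields $\mu\equiv-\log|r|/(d-1)$.

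Next, I would write $|\ndot|_{\varphi_{g,\beta}}=\exp(\lambda)\,|\ndot|_{\varphi_{f,\alpha}}$ for a continuous function $\lambda$ on $X^{\mathrm{an}}$, substitute into the defining identity $|\ndot|_{g^*(\varphi_{g,\beta})}=|\beta(\ndot)|_{e\varphi_{g,\beta}}$, and replace $|\ndot|_{g^*(\varphi_{f,\alpha})}$ using the intermediate identity just obtained. This gives $\exp(g^*\lambda)\,|r|^{-1/(d-1)}=\exp(e\lambda)$, that is $g^*(\lambda)=e\lambda+\log|r|/(d-1)$. Applying Lemma~\ref{lem:dynamic:const} once more — now to the surjective endomorphism $g$ with $b=e\geqslant 2$; its proof uses only the compactness of $X^{\mathrm{an}}$ and surjectivity, so it applies verbatim to $g$ — we get $\lambda\equiv-\log|r|/((d-1)(e-1))$, which is exactly the asserted relation $|\ndot|_{\varphi_{g,\beta}}=|r|^{-1/(d-1)(e-1)}|\ndot|_{\varphi_{f,\alpha}}$. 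The ``in particular'' statement is then the case $r=1$.

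The only genuinely delicate part is the bookkeeping in the intermediate identity: one must keep careful track of which isomorphism transports which metric and of the correct powers $d$, $e$, $de$, and check that the two evaluations of $g^*f^*(\varphi_{f,\alpha})$ really correspond to the two composites in the hypothesis, so that the scalar $r$ can be isolated with the right exponent. Everything else is a direct application of Lemma~\ref{lem:dynamic:const}, just as in Proposition~\ref{prop:change:isom}.
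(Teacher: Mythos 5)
Your proposal is correct and follows essentially the same route as the paper: introduce the comparison potential for $g^*(\varphi_{f,\alpha})$ against $|\beta(\ndot)|_{e\varphi_{f,\alpha}}$, pull back by $f$ and use the commutation relation $\beta^{\otimes d}\circ g^*(\alpha)=r\cdot\alpha^{\otimes e}\circ f^*(\beta)$ together with the defining isometry for $\varphi_{f,\alpha}$ to get the functional equation $f^*(\cdot)=d(\cdot)+\log|r|$, invoke Lemma~\ref{lem:dynamic:const}, and then repeat for the potential relating $\varphi_{g,\beta}$ to $\varphi_{f,\alpha}$ under $g$. The only (cosmetic) differences are the swapped names for the two auxiliary functions and the fact that you package the first step as an explicit ``mixed functional equation'' with the constant $|r|^{-1/(d-1)}$ isolated, whereas the paper keeps $\lambda$ symbolic until the end; you also helpfully make explicit that Lemma~\ref{lem:dynamic:const}, though stated for $f$, applies verbatim to $g$ since its proof only uses surjectivity and compactness.
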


\begin{proof}
We can find a continuous function $\lambda$ on $X^{\mathrm{an}}$ such that 
\[|\ndot|_{g^*(\varphi_{f,\alpha})} = \exp(\lambda)|\beta(\ndot)|_{ e\varphi_{f,\alpha}}.\]
Thus
\begin{align*}
|\ndot|_{f^*(g^*(\varphi_{f,\alpha}))} & = \exp(f^*(\lambda))|f^*(\beta)(\ndot)|_{ ef^*(\varphi_{f,\alpha})} = \exp(f^*(\lambda))|\alpha^{\otimes e}(f^*(\beta)(\ndot))|_{de\varphi_{f,\alpha}}.
\end{align*}
On the other hand,
\begin{align*}
|\ndot|_{g^*(f^*(\varphi_{f,\alpha}))} & = |g^*(\alpha)(\ndot)|_{g^*(d\varphi_{f,\alpha})} = \exp(\lambda)^d |\beta^{\otimes d}(g^*(\alpha)(\ndot))|_{de\varphi_{f,\alpha}} \\
& = \exp(\lambda)^d |r\alpha^{\otimes e}(f^*(\beta)(\ndot))|_{de\varphi_{f,\alpha}} =\exp(\lambda)^d |r||\alpha^{\otimes e}(f^*(\beta)(\ndot))|_{de\varphi_{f,\alpha}},
\end{align*}
so 
$f^*(\lambda) = d \lambda + \log |r|$.
Therefore, by Lemma~\ref{lem:dynamic:const},
$\lambda$ is a constant function and $\lambda = -\log|r|/(d-1)$.

Here we set $|\ndot|_{\varphi_{g,\beta}} = \exp(\mu)|\ndot|_{\varphi_{f,\alpha}}$  for some continuous function $\mu$ on $X^{\mathrm{an}}$.
Then, as $|\ndot|_{g^*(\varphi_{g,\beta})} = |\beta(\ndot)|_{e\varphi_{g,\beta}}$, one has
\begin{align*}
\exp(g^*(\mu)) |\ndot|_{g^*(\varphi_{f,\alpha})} & = |\ndot|_{g^*(\varphi_{g,\beta})}= |\beta(\ndot)|_{e\varphi_{g,\beta}} =
\exp(e\mu) |\beta(\ndot)|_{e \varphi_{f,\alpha}} \\
& = \exp(e\mu-\lambda) |\ndot|_{g^*(\varphi_{f,\alpha})},
\end{align*}
that is, $g^*(\mu) =  e\mu - \lambda$. Therefore, by Lemma~\ref{lem:dynamic:const}, $\mu$ is a constant and
$\mu = \lambda/(e-1)$, and hence
\[
\mu = -\log|r|/(d-1)(e-1),
\]
as required.
\end{proof}

\begin{rema}\label{rem:compactibility:compactification}
If  $\beta^{\otimes d} \circ g^*(\alpha) = r  \cdot \alpha^{\otimes e} \circ f^*(\beta)$ ($r \in k^{\times}$), then, for $c \in k^{\times}$,
\[
(c\beta)^{\otimes d} \circ g^*(\alpha) = c^d \cdot \beta^{\otimes d} \circ g^*(\alpha) = (c^d  r ) \cdot \alpha^{\otimes e} \circ f^*(\beta) =
(c^{d-1} r) \cdot \alpha^{\otimes e} \circ f^*(c\beta).
\]
Thus if there exists $c \in k$ such that $c^{d-1} r = 1$, then
$(f, \alpha)$ is compatible with $(g, c\beta)$.
In particular, if $k$ is algebraically closed, then, for $(f, \alpha)$,
we can find $\beta$ such that $(f, \alpha)$ is compatible with $(g, \beta)$.
\end{rema}

\subsection{Global canonical compactification}
Let $K$ be a field equipped with an adelic structure $((\Omega, \mathcal{A}, \nu), \phi)$.
We assume that $\nu(\mathcal{A}) \not\subseteq \{ 0, \infty \}$.
Let $X$ be a geometrically integral projective variety over $K$ and $f : X \to X$ be a surjective endomorphism of $X$ over $K$.
Let $L$ be a semiample line bundle on $X$ such that there exists an isomorphism $\alpha : f^*(L) \overset{\sim}{\longrightarrow} L^{\otimes d}$ for some integer $d \geqslant 2$.
For each $\omega \in \Omega$, let $\varphi_\omega$ be the local compactification of $L_\omega$, that is,
the isomorphism $\alpha_\omega : f_\omega^*(L_\omega)  \overset{\sim}{\longrightarrow}  L_\omega^{\otimes d}$  induces an isometry
$f_{\omega}^*(L_\omega, \varphi_\omega) \simeq (L_\omega, \varphi_\omega)^{\otimes d}$.

\begin{prop}\label{prop:global:compactifcation}
If we set $\varphi = \{ \varphi_{\omega} \}_{\omega \in \Omega}$, then $(L, \varphi)$ is a nef adelic line bundle on $X$.
The family $\varphi$ is called the global compactification of $L$.
\end{prop}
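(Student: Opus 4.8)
The plan is to prove the statement in two stages: first that $(L,\varphi)$ is an \emph{adelic} line bundle (i.e.\ $\varphi$ is dominated and measurable), and then that it is in fact nef. Throughout I use the hypotheses of this subsection ($K$ carries the adelic structure $((\Omega,\mathcal A,\nu),\phi)$ with $\nu(\mathcal A)\not\subseteq\{0,\infty\}$, $X$ geometrically integral projective over $K$, $f$ a surjective endomorphism, $L$ semiample and $\alpha\colon f^*(L)\xrightarrow{\ \sim\ }L^{\otimes d}$ with $d\geqslant2$). \textbf{Step 1 (Tate-type comparison).} Fix any reference $\varphi^{0}\in\mathscr M(L)$. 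Via $\alpha$, both $f^{*}(L,\varphi^{0})$ and $(L,\varphi^{0})^{\otimes d}$ are adelic line bundles on $X$, and comparing them produces a family $h=(h_\omega)_{\omega\in\Omega}\in\mathscr C^{0}_{\mathrm a}(X)$, so $g(\omega):=\sup_{x\in X_\omega^{\mathrm{an}}}|h_\omega|(x)$ is $\nu$-integrable. For $n\in\mathbb N$ set $\psi^{(n)}:=d^{-n}(f^{n})^{*}\varphi^{0}$ (iterating $\alpha$); then $\psi^{(0)}=\varphi^{0}$, each $(L,\psi^{(n)})$ is an adelic line bundle, and, because $f^{n}$ is surjective and hence preserves sup-norms under pull-back,
\[
\sup_{x\in X_\omega^{\mathrm{an}}}\bigl|\psi^{(n+1)}_\omega-\psi^{(n)}_\omega\bigr|(x)\ \leqslant\ d^{-(n+1)}\,g(\omega).
\]
Hence $(\psi^{(n)}_\omega)_{n}$ converges uniformly on $X_\omega^{\mathrm{an}}$; its limit satisfies the functional equation characterizing the local canonical metric, so by uniqueness it equals $\varphi_\omega$, and moreover $\sup_{x\in X_\omega^{\mathrm{an}}}|\varphi_\omega-\varphi^{0}_\omega|(x)\leqslant g(\omega)/(d-1)$.

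\textbf{Step 2 (adelicity).} The last estimate shows that $\varphi$ is dominated, since $\varphi^{0}$ is and $\varphi-\varphi^{0}\in\mathscr C^{0}_{\mathrm d}(X)$ (Proposition~\ref{prop:dominated:criterion}). For measurability, note that pull-back along the $K$-morphism $f$ and scaling by $d^{-1}$ preserve measurability of metric families (the defining conditions are stable under these operations, using that $K$ is perfect), so each $(L,\psi^{(n)})$ is measurable; since $\psi^{(n)}\to\varphi$ uniformly one has $d_\omega(\psi^{(n)},\varphi)\to0$ for every $\omega$, and the limit $(L,\varphi)$ is therefore measurable — exactly the passage to the limit used at the end of the proof of Proposition~\ref{Pro: measurability of green function 02}. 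Thus $(L,\varphi)$ is an adelic line bundle.

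\textbf{Step 3 (nefness).} Relative nefness is built in: each $\varphi_\omega$ is semipositive. For the global part, the key structural fact is that, $\alpha$ being defined over $K$ and each $\alpha_\omega$ being an isometry $f_\omega^{*}(L_\omega,\varphi_\omega)\simeq(L_\omega,\varphi_\omega)^{\otimes d}$ by construction, the pair $(f,\alpha)$ induces an isometry $f^{*}(\overline L,\varphi)\simeq(\overline L,\varphi)^{\otimes d}$ of adelic line bundles, and likewise $(f^{n})^{*}(\overline L,\varphi)\simeq(\overline L,\varphi)^{\otimes d^{n}}$. Functoriality of heights and the projection formula then yield $h_{\overline L}(f(Z))=d\,h_{\overline L}(Z)$ for every integral closed subscheme $Z$ of $X$; in particular $h_{\overline L}(f(x))=d\,h_{\overline L}(x)$ on closed points. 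On the other hand, covering $X$ by the non-vanishing loci of a finite family of sections of a suitable power of $L$ and using the integrability of $\omega\mapsto\ln\|s\|_{\varphi_\omega}$ (cf.\ Proposition~\ref{Pro: measurability logarithmic} and \cite[Proposition~6.2.12]{CMArakelovAdelic}) gives a uniform lower bound $h_{\overline L}(x)\geqslant-C$ over all closed points $x$. The standard Tate/Fatou argument — a point of negative height would iterate under $f$ to points of height $\to-\infty$ — then forces $\hat\mu_{\mathrm{ess}}(\overline L|_Y)\geqslant0$, hence $h_{\overline L}(Y)\geqslant0$, for every $Y$. This is the nefness of $(L,\varphi)$.

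\textbf{Main obstacle.} Steps 1--2 are routine (a telescoping estimate plus stability of measurability under pull-back and uniform limits). The delicate point is Step 3: one must make precise that $\alpha$ yields a genuine isometry of adelic line bundles over $S$ — not merely an $\omega$-by-$\omega$ family of isometries — and then run the dynamical positivity argument inside the height/slope formalism over an adelic curve, in particular justifying the uniform lower bound for $h_{\overline L}$ on all closed points that feeds the Tate argument.
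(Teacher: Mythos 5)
Your Steps 1 and 2 (the Tate-type iteration and the passage to a uniform limit to get adelicity) match the paper's argument closely, so there is nothing to add there, except that you should start the iteration from a \emph{relatively nef} reference $\varphi^0$ if you want the local limits $\varphi_\omega$ to be semipositive (this is where the uniqueness in \cite[Proposition~2.5.11]{CMArakelovAdelic} is invoked); with an arbitrary reference the semipositivity of $\varphi_\omega$ is not "built in" but borrowed from the local theory.

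Where you genuinely diverge is Step 3. The paper first upgrades a relatively nef reference $\varphi_0$ to an \emph{absolutely} nef one by twisting with $\exp(-\vartheta(\omega))$ for a suitable non-negative $\vartheta$ — this is precisely where the standing hypothesis $\nu(\mathcal A)\not\subseteq\{0,\infty\}$ is used — and then shows by induction that each $\varphi_n$ is nef (pullback of nef along the surjective $f$ is nef, and nefness descends along $d$-th tensor roots), finally invoking closedness of nefness under uniform limits. You instead run a Tate/Fatou dynamical argument: the isometry $f^*(\overline L)\simeq\overline L^{\otimes d}$ gives $h_{\overline L}(f(x))=d\,h_{\overline L}(x)$ on closed points, a uniform lower bound on the height (from global generation of a power of $L$ plus dominancy), and hence non-negativity of all point heights, which is the nefness condition. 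Both routes are valid, and yours has the mild advantage of not invoking $\nu(\mathcal A)\not\subseteq\{0,\infty\}$ at this step. Two small cautions: the deduction ``$\hat\mu_{\mathrm{ess}}(\overline L|_Y)\geqslant0$, hence $h_{\overline L}(Y)\geqslant0$'' for general $Y$ is unnecessary (nefness here is a statement about closed points) and, as written, is close to circular since that implication is a Zhang-type inequality that itself presupposes nefness; and the "genuine isometry over $S$" you flag as delicate is in fact immediate once Step 2 is done, because $\alpha$ is defined over $K$ and the defining identity $|\ndot|_{f^*\varphi_\omega}=|\alpha_\omega(\ndot)|_{d\varphi_\omega}$ holds at every $\omega$, so there is no residual subtlety there.
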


\begin{proof}
Let $\varphi_0 = \{ \varphi_{0, \omega} \}_{\omega \in \Omega}$ be a metric family of $L$ such that $(L, \varphi_0)$ is a relatively nef adelic line bundle.
By the assumption ``$\nu(\mathcal{A}) \not\subseteq \{ 0, \infty \}$'', we can find a non-negative measurable function $\vartheta$ on $\Omega$ such that
\[
\hat{\mu}_{\min}^{\mathrm{asy}}(L, \varphi_0) + \int_{\Omega} \vartheta(\omega) \nu(\mathrm{d}\omega) \geqslant 0.
\]
If we set $\varphi^{\vartheta}_{0, \omega} = \exp(-\vartheta(\omega)) \varphi_{0, \omega}$ and $\varphi_0^{\vartheta} = \{ \varphi_{0,\omega}^{\vartheta}\}_{\omega \in \Omega}$, then
it is easy to see that
\[
\hat{\mu}_{\min}^{\mathrm{asy}}(L, \varphi^{\vartheta}_0) = \hat{\mu}_{\min}^{\mathrm{asy}}(L, \varphi_0) +  \int_{\Omega} \vartheta(\omega) \nu(\mathrm{d}\omega) \geqslant 0,
\]
so $(L, \varphi^{\vartheta}_0)$ is nef by \cite[Proposition~5.3.6]{CMHS}. Thus we may assume that $(L, \varphi_0)$ is nef.

For each $\omega \in \Omega$,
one can find a continuous function $\lambda_\omega$ on $X_\omega^{\mathrm{an}}$ such that
\[|\ndot|_{f^*(\varphi_{\varphi_{0,\omega}})} = |\alpha_\omega(\ndot)|_{d\varphi_{0,\omega}} \exp(\lambda_\omega).\]
Note that $(\OO_X, \{\exp( \lambda_{\omega})|\ndot|_\omega \}_{\omega \in \Omega})$ is an adelic line bundle. 
Let
\[
h_{0,\omega} = 0\quad\text{and}\quad h_{n,\omega} = \sum_{i=0}^{n-1} \frac{1}{d^{i+1}} (f_\omega^i)^*(\lambda_{\omega})\quad (n \geqslant 1),
\]
and $|\ndot|_{\varphi_{n, \omega}} = |\ndot|_{\varphi_{0, \omega}} \exp(h_{n,\omega})$.
Then, in the same way as \cite[Proposition~2.5.11]{CMArakelovAdelic}, one can see that
$\{ h_{n,\omega} \}_{n=0}^{\infty}$ converges uniformly to a continuous function $h_{\infty, \omega}$ on $X_{\omega}^{\mathrm{an}}$
and $\alpha_\omega : f_\omega^*(L_\omega)  \overset{\sim}{\longrightarrow}  L_\omega^{\otimes d}$ induces an isometry 
\[f_{\omega}^*(L_{\omega}, \{ \varphi_{n-1, \omega}\}_{\omega \in \Omega}) \simeq (L_{\omega}, \{\varphi_{n,\omega}\}_{\omega \in \Omega} )^{\otimes d}.\]
In particular, if we set $|\ndot|_{\varphi_{\infty, \omega}} = |\ndot|_{\varphi_{0,\omega}}\exp(h_{\infty, \omega})$, then $\alpha_{\omega}$ yields
an isometry 
\[f_{\omega}^*(L_{\omega}, \{ \varphi_{\infty, \omega}\}_{\omega \in \Omega}) \simeq (L_{\omega}, \{\varphi_{\infty,\omega}\}_{\omega \in \Omega} )^{\otimes d},\]
and hence $ \varphi_{\infty, \omega}$ is the local canonical compactification of $L_\omega$.
Thus, by the uniqueness of the local canonical compactfication, we have $\varphi_{\infty,\omega} = \varphi_{\omega}$ for all $\omega \in \Omega$.
Let $\varphi_{n} = \{ \varphi_{n, \omega} \}_{\omega \in \Omega}$. 
By \cite[Proposition~6.1.29]{CMArakelovAdelic}, $(L, \varphi)$ is measurable because
$(L, \varphi_{n})$ is measurable for all $n$. Moreover, in the same way as \cite[Proposition~2.5.11]{CMArakelovAdelic}, we obtains
\[
d_{\omega}(\varphi, \varphi_{0}) \leqslant \frac{\|\lambda_{\omega}\|_{\sup}}{d-1},
\]
so $(\omega \in \Omega) \mapsto d_{\omega}(\varphi, \varphi_{g_0})$ is dominated.
Thus $(L, \varphi)$ is dominated by \cite[Proposition~6.1.12]{CMArakelovAdelic}.
Further, as $f^*(L, \varphi_{n-1}) \simeq (L, \varphi_{n})^{\otimes d}$, we can see that
$(L, \varphi_{n})$ is nef for all $n$. Therefore, $(L, \varphi)$ is also nef, as required.
\end{proof}

\begin{rema}
We assume that the adelic structure is proper

(1) Let $\alpha : f^*(L) \to L^{\otimes d}$ be the isomorphism.  
If we change the isomorphism $\alpha$ by $c\alpha$ ($c \in K^{\times}$), then, by Proposition~\ref{prop:change:isom},
\[|\ndot|_{\varphi_{f, c\alpha, \omega}} = |c|_{\omega}^{-1/(d-1)}|\ndot|_{\varphi_{f, \alpha, \omega}}\] for all $\omega \in \Omega$.
Thus, by the product formula, one has
$h_{(L, \varphi_{f, c\alpha})} = h_{(L, \varphi_{f, \alpha})}$.

(2) Let $g : X \to X$ be another surjective endomorphism of $X$ over $K$.
We assume that $f \circ g = g \circ f$ and there exists an isomorphism $\beta : g^*(L) \to L^{\otimes e}$
for some integer $e \geqslant 2$. Then, by Proposition~\ref{prop:change:endo},
there exists $r \in K^{\times}$ such that
\[
|\ndot|_{\varphi_{f, \alpha,\omega}} = |r|_{\omega}^{-1/(d-1)(e-1)} |\ndot|_{\varphi_{g, \beta,\omega}}.
\]
for all $\omega \in \Omega$. Therefore, by the product formula,
one has $h_{(L, \varphi_{f, \alpha})} = h_{(L, \varphi_{g, \beta})}$.
\end{rema}

\bibliography{positivity}
\bibliographystyle{plain}

\printindex

\end{document}